\normalfont\fontsize{12}{15}\bfseries}{\thesection}{1em}{}
\numberwithin{equation}{section}
\providecommand{\keywords}[1]
{
  \small	
  \textit{Keywords:} #1
}
\newcommand\blfootnote[1]{%
  \begingroup
  \renewcommand\thefootnote{}\footnote{\scriptsize #1}%
  \addtocounter{footnote}{-1}%
  \endgroup
}
\title{\LARGE\hspace{-5mm} Asymptotic-preserving and energy stable dynamical low-rank approximation for thermal radiative transfer equations}
\author[1,$\ast$]{Chinmay Patwardhan}
\author[1]{Martin Frank}
\author[2]{Jonas Kusch}
\affil[1]{\textit{Karlsruhe Institute of Technology, Institute of Applied and Numerical Mathematics, Karlsruhe, Germany}}
\affil[2]{\textit{Norwegian University of Life Sciences, Scientific Computing, \AA s, Norway}}
\date{\vspace{-2.5em}}
\newcommand{\R}{\mathbb{R}} 
\newcommand{\Rp}{\mathbb{R}^{+}} 
\newcommand{\N}{\mathbb{N}} 
\newcommand{\norm}[1]{\left\lVert #1 \right\rVert} 
\newcommand{\abs}[1]{\left\lvert #1 \right\rvert} 
\newcommand{\vart}{t} 
\newcommand{\varx}{x} 
\newcommand{\omg}{\mu} 
\newcommand{\varX}{\textbf{x}} 
\newcommand{\Omg}{\boldsymbol{\Omega}}  
\newcommand{\pardift}{\partial_{\vart}} 
\newcommand{\pardifx}{\partial_{\varx}} 
\newcommand{\domg}[1]{\hspace{#1mm}\mathrm{d}\omg} 
\newcommand{\dx}[1]{\hspace{#1mm}\mathrm{d}\varx} 
\newcommand{\intgomg}[1]{\langle #1 \rangle_{\omg}} 
\newcommand{\intgomgsb}[1]{\left[ #1 \right]_{\omg}} 
\newcommand{\intgx}[1]{\langle #1 \rangle_{\varx}} 
\newcommand{\epsi}{\varepsilon} 
\newcommand{\sol}{c} 
\newcommand{\parden}{f} 
\newcommand{\absorpcoeff}{\sigma^{a}} 
\newcommand{\absorpcoeffb}{\boldsymbol{\absorpcoeff}}
\newcommand{\absorpcoefflb}{\sigma_{0}} 
\newcommand{\scalflux}{\phi} 
\newcommand{\Temp}{T} 
\newcommand{\bTemp}{\textbf{\Temp}}
\newcommand{\Planck}{B(\Temp)} 
\newcommand{\specheat}{c_{\nu}} 
\newcommand{\StefBoltzconst}{a} 
\newcommand{\SOPlanck}{B} 
\newcommand{\mic}{g} 
\newcommand{\meso}{h} 
\newcommand{\Ident}{\mathcal{I}} 
\newcommand{\DPlanck}{\SOPlanck'(\Temp)} 
\newcommand{\DPlanckC}{\Psi}  
\newcommand{\facalph}{\kappa} 
\newcommand{\pk}[1]{P_{#1}} 
\newcommand{\pknorm}[1]{\gamma_{#1}} 
\newcommand{\ak}[1]{a_{#1}} 
\newcommand{\fluxMat}{\textbf{A}} 
\newcommand{\unitvec}[1]{\textbf{\textit{e}}_{#1}} 
\newcommand{\unitvecN}{\unitvec{1}} 
\newcommand{\mick}[1]{\mic_{#1}} 
\newcommand{\micvec}{\textbf{\textit{\mic}}} 
\newcommand{\bvec}{\textbf{\textit{b}}} 
\newcommand{\FfluxMat}{\fluxMat_{f}} 
\newcommand{\absFfluxMat}{\abs{\fluxMat_{f}}} 
\newcommand{\FTmat}{\Tmat_{f}} 
\newcommand{\Fba}{\ba_{f}} 
\newcommand{\bv}{\textbf{v}}
\newcommand{\ba}{\textbf{a}}
\newcommand{\quadpt}{\omg} 
\newcommand{\weight}{w} 
\newcommand{\Tmat}{\textbf{T}} 
\newcommand{\quadptMat}{\textbf{M}} 
\newcommand{\absquadMat}{\abs{\quadptMat}} 
\newcommand{\absfluxMat}{\abs{\fluxMat}} 
\newcommand{\fluxMatp}{\fluxMat^{+}} 
\newcommand{\fluxMatm}{\fluxMat^{-}} 
\newcommand{\fluxMatpm}{\fluxMat^{\pm}} 
\newcommand{\Nx}{N_{x}} 
\newcommand{\deltax}{\Delta\varx} 
\newcommand{\deltat}{\Delta\vart} 
\newcommand{\Diffp}{\mathcal{D}^{+}} 
\newcommand{\Diffm}{\mathcal{D}^{-}} 
\newcommand{\Diffpm}{\mathcal{D}^{\pm}} 
\newcommand{\Diffmp}{\mathcal{D}^{\mp}} 
\newcommand{\AdvecOp}{\mathcal{L}} 
\newcommand{\DiffO}{\mathcal{D}^{0}} 
\newcommand{\deltaO}{\delta^{0}} 
\newcommand{\bdeltaO}{\boldsymbol{\deltaO}}
\newcommand{\RoeTmat}{\widetilde{\Tmat}} 
\newcommand{\RoeEigenval}{\widetilde{\quadptMat}}
\newcommand{\absRoeEigenval}{\abs{\widetilde{\quadptMat}}}
\newcommand{\bphi}{\boldsymbol{\phi}}
\newcommand{\bzeta}{\boldsymbol{\zeta}}
\newcommand{\bigO}[1]{\mathcal{O}(#1)}
\newcommand{\energy}{e}
\newcommand{\Mr}{\mathcal{M}_{\rank}} 
\newcommand{\TyMr}[1]{\mathcal{T}_{#1}\Mr} 
\newcommand{\micmat}{\normalfont\textbf{g}}
\newcommand{\DLRAProj}{\textbf{P}}
\newcommand{\rank}{r}
\newcommand{\Xc}{X} 
\newcommand{\Vc}{\textbf{V}} 
\newcommand{\Sc}{S} 
\newcommand{\Xcvec}{\textbf{\textit{\Xc}}} 
\newcommand{\Vmat}{\textbf{\Vc}} 
\newcommand{\Smat}{\textbf{\Sc}} 
\newcommand{\Kcvec}{\textbf{\textit{K}}}
\newcommand{\Lmat}{\textbf{L}}
\newcommand{\Mbug}{\textbf{M}}
\newcommand{\Nbug}{\textbf{N}}
\newcommand{\hXmat}{\widehat{\Xmat}}
\newcommand{\hXvec}{\widehat{\Xvec}}
\newcommand{\hVmat}{\widehat{\Vmat}}
\newcommand{\hSmat}{\widehat{\Smat}}
\newcommand{\hMbug}{\widehat{\Mbug}}
\newcommand{\hNbug}{\widehat{\Nbug}}
\newcommand{\Pmat}{\textbf{P}} 
\newcommand{\Qmat}{\textbf{Q}} 
\newcommand{\BSigma}{\boldsymbol{\Sigma}} 
\newcommand{\Nrank}{\rank_{1}} 
\newcommand{\Pmattrunc}{\Pmat_{\Nrank}}
\newcommand{\Qmattrunc}{\Qmat_{\Nrank}}
\newcommand{\BSigmatrunc}{\BSigma_{\Nrank\times\Nrank}}
\newcommand{\Xvec}{\textbf{\textit{\Xc}}} 
\newcommand{\Kvec}{\textbf{\textit{K}}}
\newcommand{\AdvecOpK}{\AdvecOp_{K}}
\newcommand{\AdvecOpL}{\AdvecOp_{L}}
\newcommand{\AdvecOpS}{\AdvecOp_{S}}
\newcommand{\Smatinit}{\widetilde{\Smat}}
\newcommand{\Xmat}{\textbf{\Xc}}
\newcommand{\Kmat}{\textbf{K}}
\newcommand{\ProjX}{P^{X}}
\newcommand{\ProjV}{P^{V}}
\newcommand{\PN}{$\mathrm{P}_{N}$ }
\newtheorem{lemma}{Lemma}
\newcommand{\LL}{L^{2}}
\newtheorem{theorem}{Theorem}
\newtheorem{property}{Property}
\theoremstyle{remark}
\newtheorem{remark}{Remark}
\begin{document}

\maketitle
\begin{abstract}
    The thermal radiative transfer equations model temperature evolution through a background medium as a result of radiation. When a large number of particles are absorbed in a short time scale, the dynamics tend to a non-linear diffusion-type equation called the Rosseland approximation. The main challenges for constructing numerical schemes that exhibit the correct limiting behavior are posed by the solution's high-dimensional phase space and multi-scale effects. In this work, we propose an asymptotic-preserving and rank-adaptive dynamical low-rank approximation scheme based on the macro-micro decomposition of the particle density and a modified augmented basis-update \& Galerkin integrator. We show that this scheme, for linear particle emission by the material, dissipates energy over time under a step size restriction that captures the hyperbolic and parabolic CFL conditions. We demonstrate the efficacy of the proposed method in a series of numerical experiments.
\end{abstract}

\keywords{thermal radiative transfer equations, energy stability, asymptotic-preserving scheme, dynamical low-rank approximation, macro-micro decomposition}
    
\blfootnote{$^{\ast}$Corresponding author.}\blfootnote{Chinmay Patwardhan: chinmay.patwardhan@kit.edu}\blfootnote{Martin Frank: martin.frank@kit.edu}\blfootnote{Jonas Kusch: jonas.kusch@nmbu.no}
  \section{Introduction}\label{section:Introduction}
The radiation of particles from a hot source into a cold medium and the corresponding formation of a thermal heat front, known as a Marshak wave, is well modeled by the thermal radiative transfer equations. They consist of a coupled system of partial differential equations governing the transport of particles (represented by the particle density $\parden$) and the temperature evolution of the medium ($\Temp$) \cite{RadHeatTrans} given by
\begin{subequations}
\begin{equation*}
\frac{1}{c}\partial_{t}f + \hspace{1mm}\boldsymbol{\Omega}\cdot\nabla_{\textbf{x}}f = \sigma^{a}(B(T) - f),
\end{equation*}
\begin{equation*}
    c_{\nu}\partial_{t}T = \int_{\mathbb{S}^{2}}\sigma^{a}(f- B(T))\hspace{1mm}\mathrm{d}\boldsymbol{\Omega}.
\end{equation*}
\end{subequations}
The particle density $\parden$ depends on time $t$, position $\varX$ and direction of flight $\Omg\in \mathbb{S}^{2}$ and the temperature $\Temp$ on time and position. $\sol$ and $\specheat$ represent the speed of light and the specific heat of the material, respectively. These two equations are coupled through the absorption and emission of particles by the background material. Numerically simulating the thermal radiative transfer equations poses several challenges. First, to evolve and store the particle density, high memory, and computational resources are required. This is due to its high-dimensional phase space, consisting of temporal, spatial, and angular variables, which can be up to six-dimensional for a three-dimensional spatial domain. Second, when a large number of these particles are absorbed in small time scales, the dynamics of the system asymptotically converge to a diffusion-type non-linear partial differential equation known as the Rosseland approximation \cite{Rosseland_approx_ref} which reads
\begin{equation*}
    (c_{\nu} + 4aT^{3})\partial_{t}T = \nabla_{\textbf{x}}\cdot\left( \frac{4ac}{3\sigma^{a}}T^{3}\nabla_{\textbf{x}}T \right),
\end{equation*}
where $\StefBoltzconst$ is the radiation constant. Since the dynamics tend to a diffusion problem, the numerical scheme should also capture this behavior without having to resolve prohibitively small time scales. Numerical methods that do so while efficiently treating the stiffness arising from large absorption terms are called asymptotic-preserving (AP) schemes \cite{Jin_AP_Def}. Work on asymptotic-preserving schemes for kinetic equations can, for example, be found in \cite{Jin_AP_Def,HU2017103,MR1706723,MR2460781}.


To address the computational challenges posed by a high-dimensional phase space, we use dynamical low-rank approximation (DLRA) \cite{doi:10.1137/050639703}, which is a model-order reduction strategy that has recently gained popularity in solving kinetic equations. The fundamental idea behind dynamical low-rank approximation is to evolve the solution on the manifold of rank $ \rank $ functions $\Mr$ by projecting the dynamics to the tangent space of $\Mr$. The evolution equations thus obtained can be interpreted as a Galerkin system, with $2\rank$ basis functions for the phase space variables, which evolves the coefficients and basis functions according to the dynamics of the problem. Using standard time integrators to evolve the coefficient and basis functions leads to unstable numerical schemes and thus robust integrators, like the projector-splitting integrator (PSI) \cite{doi:10.1007/s10543-013-0454-0}, and the basis-update \& Galerkin (BUG) integrators \cite{doi:10.1007/s10543-021-00873-0,robustBUG,ceruti2023parallel,ceruti2024robust}, have been developed. 

DLRA has, for example, been shown to reduce computational costs in dose computation in radiation therapy planning \cite{refId0}, in high-scattering problems \cite{EINKEMMER2021110353} and neutron criticality problems \cite{Kusch_2022}. In recent works, DLRA was used for the thermal radiative transfer equations \cite{10.5445/IR/1000154134,baumann2023energy} where it was shown to significantly reduce computational time. The work in \cite{baumann2023energy} proposes a low-rank scheme based on the augmented BUG integrator of \cite{robustBUG} for thermal radiative transfer. Though this scheme is energy-stable and preserves mass locally, it does not include multiscale effects frequently arising in thermal radiative transfer. 

Since the thermal radiative transfer equations tend to a diffusion equation, for small time scales, the dynamics are restricted to the manifold of low-rank functions \cite{Rosseland_approx_ref} and thus can be accurately represented by a low-rank approximation. Thus, a DLRA scheme combined with techniques to preserve asymptotic behavior can be highly beneficial in tackling both numerical challenges simultaneously. This was investigated for the related problem of radiation transport in \cite{MR4253315}, where the PSI, along with a macro-micro decomposition to construct an asymptotic-preserving scheme, is used. A key challenge for such schemes is to prove stability and provide a CFL condition that takes into account effects at long time scales (kinetic regime) and small time scales (diffusive regime). In contrast to \cite{MR4253315}, the scheme constructed in \cite{einkemmer2022asymptoticpreserving} uses the fixed-rank BUG integrator and is energy stable under a CFL restriction, which captures both the kinetic and diffusive regimes. In the kinetic regime, a large number of particles stream in all directions, increasing the rank required to resolve the solution. In contrast, in the diffusive regime, a large number of streaming particles are absorbed and diffused, thus lowering the required rank of the solution \cite{Rosseland_approx_ref}. This is well demonstrated in the experiments from \cite{einkemmer2022asymptoticpreserving,MR4253315}. Thus, using a fixed-rank integrator without prior knowledge of the required rank in the regime results in either higher computational costs (due to over-approximation) or a poorly resolved solution (due to under-approximation). One of the ways to address this is by using a DLRA scheme that appropriately chooses and evolves the rank of the solution according to the regime.

This work proposes an asymptotic-preserving and rank-adaptive DLRA scheme for the thermal radiative transfer equations in slab geometry and analyzes its properties. The novelty of this work can be summarized in the following:
\begin{itemize}
    \item \textit{An asymptotic-preserving, mass conservative and rank-adaptive DLRA integrator:} We propose a new asymptotic-preserving BUG integrator for the thermal radiative transfer equations, based on the macro-micro decomposition \cite{MR2460781, MR1842224} of the particle density, the basis augmentation step from \cite{robustBUG} and conservative truncation \cite{EINKEMMER2023112060,einkemmer2023conservation}, to capture the underlying dynamics of the system. The proposed algorithm is locally mass conservative.
    \item \textit{A stability analysis for the proposed asymptotic-preserving DLRA scheme}: We show that the proposed integrator is stable in the energy norm under a CFL restriction that captures both the kinetic and the diffusive regime under linear emission of particles by the background material.
\end{itemize}
This paper is structured as follows: Following the introduction in \Cref{section:Introduction}, in \Cref{section:Background} we review background concepts that are used in this paper and build the modal macro-micro equations for the thermal radiative transfer equations. In \Cref{section:EnergyStableMMM}, we present a spatio-temporal discretization for the modal macro-micro equations, which is asymptotic-preserving and stable under a CFL restriction that captures the kinetic and diffusive regime. In \Cref{section:DLRAforMMM} we present dynamical low-rank integrators for the thermal radiative transfer equations and the stability results. Specifically, in \Cref{section:BUGforMMM}, we present the evolution equations for the modal macro-micro equations using the fixed-rank BUG integrator \cite{doi:10.1007/s10543-021-00873-0} and prove its stability property. In \Cref{section:raBUGforMMM}, we present the asymptotic-preserving BUG integrator and prove the stability of the scheme. Finally, numerical experiments are presented in \Cref{section:Numericalresults}.


\section{Background}\label{section:Background}
\noindent In this section, we review the basic ideas and concepts that are used in this work. The first subsection describes the thermal radiative transfer equations in slab geometry for the gray approximation, its asymptotic limit, and the macro-micro decomposition \cite{MR2460781} of the particle density and its angular discretization. In the second subsection, we look at dynamical low-rank approximation \cite{doi:10.1137/050639703}, the fixed-rank BUG integrator \cite{doi:10.1007/s10543-021-00873-0} and the augmented BUG integrator \cite{robustBUG}. 

\subsection{Thermal radiative transfer equations}
In this paper, we consider the dimensionless form of the gray (i.e., frequency-averaged) thermal radiative transfer equations in slab geometry,
\begin{subequations}\label{eq:RTE}
\begin{equation}\label{eq:RTEKin}
\frac{\epsi^{2}}{\sol}\pardift\parden + \epsi\omg\pardifx\parden = \absorpcoeff(\Planck - \parden),
\end{equation}
\begin{equation}\label{eq:RTETemp}
    \epsi^{2}\specheat\pardift\Temp = \int_{-1}^{1}\absorpcoeff(\parden- \Planck)\domg{1}.
\end{equation}
\end{subequations}
In the above equations, $\parden(\vart,\varx,\omg)$ represents the particle density (or angular flux) at time $\vart\in\Rp$, position $\varx\in D\subset\R$ and direction of flight $\omg\in[-1,1]$. The temperature of the material is given by $\Temp(\vart,\varx)$ and depends on time and position. These are supplemented with initial and boundary conditions, which are later specified according to the problem. $\Planck$ describes the emission of particles by the background material due to blackbody radiation at its current temperature. It is given by the Stefan-Boltzmann law, $$\Planck = \StefBoltzconst\sol\Temp^{4},$$ where $\StefBoltzconst$ is the radiation constant and $\sol$ is the speed of light. The rate of absorption and emission of particles by the background material is specified by the absorption cross-section $\absorpcoeff(\varx)$, where we assume that $\absorpcoeff(\varx)\geq\absorpcoefflb>0$. We denote the integral over $\omg$ as $ \intgomg{\cdot} = \int_{-1}^{1}\cdot\domg{1} $ and thus the scalar flux of the particle density is defined as, $\scalflux(\vart,\varx) =  \frac{1}{2}\intgomg{f} $. 

 In \eqref{eq:RTE} as $\epsi$ tends to zero, absorption effects dominate the dynamics. A Hilbert expansion of the particle density $\parden$ yields that, as $\epsi \to 0$, the particles are distributed as $\Planck$, i.e., $\parden = \Planck$, while the evolution of temperature is given by a diffusion-type non-linear equation known as the Rosseland approximation \cite{Rosseland_approx_ref}:
\begin{equation}\label{eq:RosselandApprox}
    \specheat\pardift\Temp = \frac{2}{3}\pardifx\left( \frac{1}{\absorpcoeff}\DPlanck\pardifx\Temp \right) - \frac{2}{\sol}\DPlanck\pardift\Temp,
\end{equation}
where $\DPlanck = \frac{d}{d\Temp}\Planck$.


\subsubsection{Macro-micro decomposition}
\noindent The asymptotic analysis of the thermal radiative equations \eqref{eq:RTE} shows that multiple time scales are involved in the evolution of temperature and particles. In particular, effects occur at times scales of order $\bigO{1}$, $\bigO{\epsi}$, and $\bigO{\epsi^{2}}$ and must be correctly resolved to capture the underlying dynamics of the system. One way to do this is by decomposing the particle density into variables that describe the macroscopic and microscopic effects. This type of decomposition of the particle density is called a macro-micro decomposition and was first proposed in \cite{MR2460781}. Since the thermal radiative transfer equations involve three time scales, the particle density is decomposed into macroscopic ($B$), microscopic ($\mic$), and mesoscopic ($\meso$) variables. For the thermal radiative transfer equations, this macro-micro ansatz was first proposed in \cite{MR1842224}. To be precise, we make the following ansatz for the particle density
\begin{equation}\label{eq:macromicrodecomp}
    \parden(\vart,\varx,\omg) = \SOPlanck(\Temp(\vart,\varx)) + \epsi\mic(\vart,\varx,\omg) + \epsi^{2}\meso(\vart,\varx),
\end{equation}
where $\intgomg{\mic} = 0$.
Note that since $\intgomg{\mic} = 0$, the total mass of the system is conserved and was used in \cite{koellermeier2023macromicro} to construct a numerical scheme that conserves mass in shallow water equations.
\begin{remark}
The rationale behind calling $\meso$ the mesoscopic variable instead of the microscopic variable, despite scaling as $\epsi^{2}$ is that it arises as the leading scaled quantity in the decomposition of the macroscopic quantity of radiation transport equation, the scalar flux, and does not depend on the angular variable. 
\end{remark}

To obtain evolution equations for $\meso, \mic$ and $\Temp$ we substitute the macro-micro ansatz \eqref{eq:macromicrodecomp} in the radiative transfer equations \eqref{eq:RTE} yielding the following evolution equations
\begin{subequations}\label{eq:MacMicSys}
    \begin{equation}\label{eq:MacMicSysmes}
         \frac{\epsi^{2}}{\sol}\pardift\meso + \frac{\facalph}{\sol}\absorpcoeff\DPlanck\meso + \frac{1}{2}\pardifx\intgomg{\omg\mic} = -\absorpcoeff\meso,
    \end{equation}
    \begin{equation}\label{eq:MacMicSysmic}
        \frac{\epsi^{2}}{\sol}\pardift\mic + \epsi\left( \Ident - \frac{1}{2}\intgomg{\cdot} \right)(\omg\pardifx\mic) + \DPlanck\omg\pardifx\Temp + \epsi^{2}\omg\pardifx\meso = -\absorpcoeff\mic,
    \end{equation}
    \begin{equation}\label{eq:MacMicSysmac}
        \pardift\Temp = \facalph\absorpcoeff\meso,
    \end{equation}
\end{subequations}
where we set $\facalph = \frac{2}{\specheat}$ for ease of presentation. Note that by comparing the $\bigO{\epsi^{0}}$ terms in all three equations of \eqref{eq:MacMicSys} we obtain the Rosseland approximation \eqref{eq:RosselandApprox} \cite{Rosseland_approx_ref,MR1842224}.

\paragraph{\hspace{0.1mm}Initial and boundary conditions}It remains to describe the initial and boundary conditions for the macro-micro equations \eqref{eq:MacMicSys}. Note that we can write the microscopic variable $\mic$ and mesoscopic variable $\meso$ as
\begin{subequations}\label{eq:ICBC}
    \begin{equation}\label{eq:ICBCmic}
        \mic(\vart,\varx,\omg) = \frac{1}{\epsi}\left( \parden(\vart,\varx,\omg) - \frac{1}{2}\intgomg{\parden(\vart,\varx,\omg)} \right),
    \end{equation}
    \begin{equation}\label{eq:ICBCmeso}
        \meso(\vart,\varx) = \frac{1}{\epsi^{2}}\left( \frac{1}{2}\intgomg{\parden(\vart,\varx,\omg)} - \Planck(\vart,\varx) \right).
    \end{equation}
\end{subequations}
Thus, for given initial and boundary conditions of the radiative transfer equations \eqref{eq:RTE}, we use the above relations to derive the initial and boundary conditions for the macro-micro equations \eqref{eq:MacMicSys}.

\subsubsection{Angular discretization of microscopic variable}\label{subsection:Modal MM}
The microscopic variable $\mic$ depends on the direction of flight, $\omg$, and must be discretized in the angular domain. In this work, we use the method of moments or the \PN method \cite{case1967linear} to discretize in $\omg$. To obtain the moment equations, let $\{ \widetilde{P}_{k} \}_{k\in\N\cup\{0\}}$ be orthogonal Legendre polynomials with standard $\LL([-1,1])$ norms $\pknorm{k}$, given by $\pknorm{k}^{2} = \frac{2}{2k+1}$. Let $\pk{k} = \widetilde{P}_{k}/\pknorm{k}$  denote the $k^{\text{th}}$ orthonormal Legendre polynomial satisfying the recurrence relation
\begin{equation*}
    \omg\pk{k} = \ak{k-1}\pk{k-1} + \ak{k}\pk{k+1}, \qquad \ak{k} = \frac{k+1}{\sqrt{(2k+1)(2k+3)}}.
\end{equation*}

\noindent The \PN ansatz for $\mic$ then reads
\begin{equation*}
    \mic(\vart,\varx,\omg) \approx \mic_{\mathrm{P}_{N}}(\vart,\varx,\omg) = \sum_{k=0}^{N}\mick{k}(\vart,\varx)\pk{k}(\omg),
\end{equation*}
where $\mick{k}$ is called the k$^{th}$ moment of the system. Since $\pk{k}$ is orthonormal the k$^{th}$ moment is given by $\mick{k} = \intgomg{\mic\pk{k}}$. To obtain evolution equations for the moments $\mick{k}$, $k = 0,1,\ldots,N$, we multiply \eqref{eq:MacMicSysmic} by $\pk{k}$ and integrate over $\omg\in[-1,1]$ . The evolution equation for the k$^{\text{th}}$ moment is then given by
\begin{equation*}
    \frac{\epsi^{2}}{\sol}\pardift\mick{k} + \epsi\pardifx\left( \ak{k-1}\mick{k-1} + \ak{k}\mick{k+1} \right) - \epsi\frac{\pknorm{0}\pknorm{1}}{2}\pardifx\mick{1}\delta_{k0} + \pknorm{1}\left(\DPlanck\pardifx\Temp + \epsi^{2}\pardifx\meso \right)\delta_{k1} = -\absorpcoeff\mick{k}.
\end{equation*}
Note that since $\intgomg{\mic} = 0$, we get $\mick{0} = 0$ and we obtain the following system of modal macro-micro equations
\begin{subequations}\label{eq:ModalMacMicSys}
    \begin{equation}\label{eq:ModalMacMicSysmeso}
        \frac{\epsi^{2}}{\sol}\pardift\meso + \frac{\facalph}{\sol}\absorpcoeff\DPlanck\meso + \frac{\pknorm{1}}{2}\pardifx\mick{1} = -\absorpcoeff\meso,
    \end{equation}
    \begin{equation}\label{eq:ModalMacMicSysmic}
        \frac{\epsi^{2}}{\sol}\pardift\micvec + \epsi\fluxMat\pardifx\micvec + \bvec\left(\DPlanck\pardifx\Temp + \epsi^{2}\pardifx\meso \right) = -\absorpcoeff\micvec,
    \end{equation}
    \begin{equation}\label{eq:ModalMacMicSysmac}
        \pardift\Temp = \facalph\absorpcoeff\meso,
    \end{equation}
\end{subequations}
where
$$ \micvec = (\mick{1},\ldots,\mick{N})^{\top}\in\R^{N}, \qquad \fluxMat = \begin{bmatrix}
    0 & \ak{1} & & \\
    \ak{1} & 0 & \ddots &\\
    & \ddots & \ddots & \\
    &&&\ak{N-1}\\
    &&\ak{N-1}&0
\end{bmatrix}\in\R^{N\times N} \text{ and }\quad  \bvec = (\pknorm{1},0,\ldots,0)^{\top}\in \R^{N}. $$


\subsection{Dynamical low-rank approximation} \label{subsection:DLRA}
In this subsection, we give an overview of the DLRA put forth in \cite{doi:10.1137/050639703}. The fundamental motivation behind DLRA is to evolve a solution on a low-rank manifold of a given rank. To make this more concrete, let $\mic_{ik} = \mic_{k}(\vart,\varx_{i})$ be the evaluation of the k$^{\text{th}}$ moment of the microscopic variable $\mick{k}$ at spatial point $\varx_{i}$. The goal is then to evolve $\micmat$ such that it stays on the manifold of rank $\rank$ matrices, $\Mr$. 
In DLRA, a low-rank approximation is computed by projecting the dynamics of the problem onto the tangent space of the manifold \cite{doi:10.1137/050639703}.

For any matrix $\micmat_{\rank}\in\Mr\subset\R^{\Nx\times N}$ we have the factorization
\begin{equation}
    \micmat_{\rank}(\vart) = \Xmat(\vart)\Smat(\vart)\Vmat(\vart)^{\top}.
\end{equation}
This means that the solution matrix is spanned by the spatial basis $\Xmat:\Rp\to\R^{\Nx\times\rank}$, the moment basis $\Vmat:\Rp\to\R^{N\times\rank}$, and the coefficient matrix $\Smat:\Rp\to\R^{\rank\times\rank}$. In DLRA, the basis and coefficient matrices are evolved on the low-rank manifold such that, for $\micmat_{\rank}(t)\in\Mr$ and a given right-hand side $\textbf{F}:\R^{\Nx\times N}\rightarrow \R^{\Nx\times N}$, the following minimization problem is satisfied at all times $t$
\begin{equation*}
     \underset{\dot{\micmat}_{\rank}(\vart)\in\TyMr{\micmat_{\rank}(\vart)}}{\text{min}}\norm{\dot{\micmat}_{\rank}(\vart) - \textbf{F}(\micmat_{\rank}(\vart))}_{F}.
\end{equation*}
Here $\TyMr{\micmat_{\rank}(\vart)}$ denotes the tangent space of $\Mr$ at $\micmat_{\rank}$. A reformulation of this minimization problem \autocite[Lemma~4.1]{doi:10.1137/050639703} projects the right-hand side onto the tangent space and requires solving
\begin{equation*}
    \dot{\micmat}_{\rank}(\vart) = \DLRAProj(\micmat_{\rank}(\vart))\textbf{F}(\micmat_{\rank}(\vart))
\end{equation*}
where
\begin{equation*}
    \DLRAProj(\micmat_{\rank}) \textbf{Z} = \Xmat\Xmat^{\top}\textbf{Z} - \Xmat\Xmat^{\top}\textbf{Z}\Vmat\Vmat^{\top} + \textbf{Z}\Vmat\Vmat^{\top}
\end{equation*}
is the projection onto the tangent space $\TyMr{\micmat_{\rank}(\vart)}$.

Following \cite{doi:10.1137/050639703}, evolution equations can be derived for the factorized solution from the above equations as
\begin{subequations}
    \begin{equation*}
        \dot{\Smat}(\vart) = \Xmat(\vart)^{\top}\textbf{F}(\micmat_{\rank}(\vart))\Vmat(\vart),
    \end{equation*}
    \begin{equation*}
        \dot{\Xmat}(\vart) = (\textbf{I} - \Xmat(\vart)\Xmat(\vart)^{\top})\textbf{F}(\micmat_{\rank}(\vart))\Vmat(\vart)\Smat(\vart)^{-1},
    \end{equation*}
    \begin{equation*}
        \dot{\Vmat}(\vart) = (\textbf{I} - \Vmat(\vart)\Vmat(\vart)^{\top})\textbf{F}(\micmat_{\rank}(\vart))^{\top}\Xmat(\vart)\Smat(\vart)^{-\top}.
    \end{equation*}
\end{subequations}
In case of over-approximation of the rank, the coefficient matrix $\Smat$ becomes nearly singular which is a source of instabilities. Thus, robust integrators that do not invert the coefficient matrix have been developed \cite{doi:10.1007/s10543-013-0454-0,doi:10.1007/s10543-021-00873-0,robustBUG,ceruti2023parallel}. In this work, we use the fixed-rank BUG \cite{doi:10.1007/s10543-021-00873-0} and the augmented BUG integrator \cite{robustBUG}, which we describe here in brief.

For a given factorized initial solution, $\micmat^{0}_{\rank} = \Xmat^{0}\Smat^{0}\Vmat^{0,\top}$, one step of the fixed-rank BUG integrator updates the factors $\Xmat,\Smat,\Vmat$ from time $\vart_{0}$ to $\vart_{1}$ by the following sub-steps
\begin{itemize}
    \item[\textbf{K-step}] Update $\Xmat^{0}$ to $\Xmat^{1}$ by solving
\begin{equation*}
    \dot{\Kmat}(\vart) = \textbf{F}(\Kmat(\vart)\Vmat^{0,\top})\Vmat^{0}, \qquad \Kmat(\vart_{0}) = \Xmat^{0}\Smat^{0}.
\end{equation*}
Compute $\Kmat(\vart_{1}) = \Xmat^{1}\Smat^{K}$, e.g. by using QR decomposition, and store $\Mbug = \Xmat^{1,\top}\Xmat^{0} $. 

\item[\textbf{$\Lmat$-step}] Update $\Vmat^{0}$ to $\Vmat^{1}$ by solving
\begin{equation*}
    \dot{\Lmat}(\vart) = \Xmat^{0,\top}\textbf{F}(\Xmat^{0}\Lmat(\vart)), \qquad \Lmat(\vart_{0}) = \Smat^{0}\Vmat^{0,\top}.
\end{equation*}
Compute $\Lmat(\vart_{1})^{\top} = \Vmat^{1}\Smat^{L,\top}$, e.g. by using QR decomposition, and store $\Nbug = \Vmat^{1,\top}\Vmat^{0} $. 
\item[\textbf{$\Smat$-step}] Update the coefficient matrix $\Smat^{0}$ to $\Smat^{1} $  by performing Galerkin step in the updated basis
\begin{equation*}
    \dot{\Smat}(\vart) = \Xmat^{1,\top}\textbf{F}(\Xmat^{1}\Smat(\vart)\Vmat^{1,\top})\Vmat^{1}, \qquad \Smat(\vart_{0}) = \Mbug\Smat^{0}\Nbug^{\top}
\end{equation*}
and set $\Smat^{1} = \Smat(\vart_{1})$.
\end{itemize}
\noindent Then the approximation at the next time step is set as $\micmat_{\rank}(\vart_{1}) = \Xmat^{1}\Smat^{1}\Vmat^{1,\top} $. 

Using a fixed-rank integrator comes with several challenges. First, since the rank of the solution is not known beforehand, it is usually over-approximated, which leads to increased computational costs. Second, the rank of the solution may vary over time \cite{robustBUG,refId0,baumann2023energy}; thus, a fixed-rank integrator may not capture the solution correctly. Moreover, the fixed-rank BUG integrator does not preserve solution invariances. To overcome these, a rank-adaptive extension of the fixed-rank BUG integrator, known as the augmented BUG integrator, was presented in \cite{robustBUG} that appends extra spatial and angular basis vectors by reusing the old basis and truncates the rank to a prescribed tolerance $\vartheta$.

To present the algorithm, we denote all quantities of rank $2\rank$ with hats and those of rank $\rank$ without. Then, one step of the augmented BUG integrator updates the solution, $\micmat^{0}_{\rank} = \Xmat^{0}\Smat^{0}\Vmat^{0,\top}$ of rank $\rank$ (note that $\micmat_{\rank}$ represents low-rank approximation and not an approximation of rank $\rank$), from time $\vart_{0}$ to $\vart_{1}$ through the following steps
\begin{enumerate}
    \item Update and expand the spatial and angular basis in parallel.\\
    \begin{itemize}
        \item[\textbf{$\Kmat$-step}] Solve
\begin{equation*}\label{eq:raBUGKstep}
    \dot{\Kmat}(\vart) = \textbf{F}(\Kmat(\vart)\Vmat^{0,\top})\Vmat^{0}, \qquad \Kmat(\vart_{0}) = \Xmat^{0}\Smat^{0},
\end{equation*}
and compute the updated basis matrix $\hXmat\in\R^{\Nx\times 2\rank} $ as an orthonormal basis of $\left[ \Kmat(\vart_{1}), \Xmat^{0}\right]$ and store $\hMbug = \hXmat^{\top}\Xmat^{0}\in\R^{2\rank\times\rank}$. 

        \item[\textbf{$\Lmat$-step}]   Solve
\begin{equation*}\label{eq:raBUGLstep}
    \dot{\Lmat}(\vart) = \Xmat^{0,\top}\textbf{F}(\Xmat^{0}\Lmat(\vart)), \qquad \Lmat(\vart_{0}) = \Smat^{0}\Vmat^{0,\top},
\end{equation*}
and compute the updated basis matrix $\hVmat\in\R^{\N\times 2\rank} $ as an orthonormal basis of $ \left[\Lmat(\vart_{1})^{\top}, \Vmat^{0} \right] $ and store $\hNbug = \hVmat^{\top}\Vmat^{0}\in\R^{2\rank\times\rank}$.
    \end{itemize}

\item Update the coefficient matrix $\Smat^{0} $ to $\hSmat $  by performing Galerkin step in the updated and expanded basis
\begin{equation*}\label{eq:raBUGSstep}
    \dot{\hSmat}(\vart) = \hXmat^{1,\top}\textbf{F}(\hXmat\hSmat(\vart)\hVmat^{\top})\hVmat, \qquad \hSmat(\vart_{0}) = \hMbug\Smat^{0}\hNbug^{\top}.
\end{equation*}

\item  Truncation to new rank $\Nrank$.\\
Compute the SVD decomposition of $\hSmat$
\begin{equation*}
    \hSmat = \Pmat\BSigma\Qmat^{\top},
\end{equation*}
where $\Pmat,\Qmat\in\R^{2\rank\times 2\rank}$ are orthogonal matrices and $\BSigma\in\R^{2\rank\times 2\rank} $ is a diagonal matrix with singular values, $ \hat{\sigma}_{1},\ldots,\hat{\sigma}_{2\rank}$. The new rank $\Nrank$ is chosen as $1\leq\Nrank\leq 2\rank$ such that, for some user-defined $\vartheta$, the following is satisfied:
\begin{equation*}
    \left(\sum_{i = \Nrank + 1}^{2\rank}\hat{\sigma}_{i}^{2}\right)^{1/2} \leq \vartheta.
\end{equation*}
To set the updated factors, we define $\Pmattrunc$ and $\Qmattrunc$ to be the matrices containing the first $\Nrank$ columns of $\Pmat$ and $\Qmat$, respectively. $\BSigmatrunc$ is set as the diagonal matrix containing the first $\Nrank$ singular values of $\hSmat$. Then the updated factors are set as $\Xmat^{1} = \hXmat\Pmattrunc$, $\Vmat^{1} = \hVmat\Qmattrunc$ and $\Smat^{1} = \BSigmatrunc$ and, the approximation at time $\vart_{1}$ is then $\micmat^{1}_{\rank} = \Xmat^{1}\Smat^{1}\Vmat^{1,\top}$.
\end{enumerate}

\begin{remark}
    Note that often in practice, to truncate the rank, a relative tolerance of the form $\vartheta\cdot\norm{\BSigma}_{2}$ is used.
\end{remark}

\section{Energy stability of modal macro-micro equations}\label{section:EnergyStableMMM}
\noindent Having discretized the macro-micro equations in angle \eqref{eq:ModalMacMicSys}, in this section, we present an asymptotic-preserving spatio-temporal discretization and investigate its energy stability.

\subsection{Spatio-temporal discretization}
\noindent  We start by noting that for $i,j\in\{1,\ldots,N\}$,
we can represent the $(i,j)^{\text{th}}$ term of the flux matrix $\fluxMat$ by a quadrature rule. That is,
\begin{equation*}
    \mathrm{A}_{ij} = \intgomg{\omg\pk{i}\pk{j}} = \int_{-1}^{1}\omg\pk{i}(\omg)\pk{j}(\omg)\domg{1} \approx \sum_{k=1}^{N+1}\weight_{k}\quadpt_{k}\pk{i}(\quadpt_{k})\pk{j}(\quadpt_{k}),
\end{equation*}
where $(\quadpt_{k})_{k = 1,\ldots,N+1}$ and $(\weight_{k})_{k = 1,\ldots,N+1}$ are quadrature points and weights given by the Gauss-Legendre quadrature rule. If we define the matrices $\Tmat\in\R^{N\times(N+1)}$, with $T_{ik} = \sqrt{\weight_{k}}\pk{i}(\quadpt_{k})$, and $\quadptMat\in\R^{(N+1)\times(N+1)}$, with $\mathrm{M}_{ij} = \quadpt_i\delta_{ij}$, then we can write the flux matrix as $\fluxMat = \Tmat\quadptMat\Tmat^{\top}$. Given $(\absquadMat)_{ij} = \abs{\mathrm{M}_{ij}}$ we can define a stabilization matrix for a finite volume discretization as $\abs{\fluxMat} = \Tmat\absquadMat\Tmat^{\top}$ and $\fluxMatpm = \frac{1}{2}\Tmat(\quadptMat \pm \absquadMat)\Tmat^{\top}$.
\begin{remark}
    The choice of the stabilization matrix used here is not the standard Roe matrix $\RoeTmat\absRoeEigenval\RoeTmat^{\top}$ where $\fluxMat = \RoeTmat\RoeEigenval\RoeTmat^{\top}$ is the eigendecomposition of the flux matrix. That is, the columns of $\RoeTmat\in\R^{N\times N}$ consist of orthogonal eigenvectors of $\fluxMat$ and $\RoeEigenval\in\R^{N\times N} $ has the corresponding eigenvalues on the diagonal. The factorization of the flux matrix used, consisting of transformation matrices in $\R^{N\times(N+1)}$, is needed for the diagonalization of the modal scheme for showing stability in the energy norm. This choice of stabilization matrix was first presented in \cite{einkemmer2022asymptoticpreserving} for the radiative transport equation.
\end{remark}
We discretize in space using an equidistant staggered grid, with $\deltax = 1/\Nx$, for a given number of $\Nx\in\N$ spatial cells. The cell interfaces are given by $\varx_{1/2},\ldots,\varx_{\Nx+1/2}$ and the midpoints by $\varx_{i}$ for $i\in\{1,\ldots,\Nx\}$.
The temperature ($\Temp$) and mesoscopic variable ($\meso$) are resolved at the full grid points $\varx_{i}$ whereas the microscopic variable ($\micvec$) is evaluated at the cell interfaces $\varx_{i+1/2}$.\\
Since $\Planck = \StefBoltzconst\sol\Temp^{4}$ we get $\DPlanck = 4\StefBoltzconst\sol\Temp^{3}$ and thus, to simplify the presentation of the algorithm, we define
\begin{equation*}
    \DPlanckC(\vart,\varx) = 4(\Temp(\vart,\varx))^{3}.
\end{equation*}
The values of $\DPlanckC$ at $\varx_{i}$ and $\varx_{i+1/2}$ are given by $\DPlanckC^{n}_{i} = \DPlanckC(\vart^{n},\varx_{i}) $ and $ \DPlanckC^{n}_{i+1/2} = \frac{\DPlanckC^{n}_{i+1} + \DPlanckC^{n}_{i}}{2} $, respectively. Finally, to discretize in time we employ a forward-backward Euler scheme and obtain the following modal macro-micro scheme
\begin{subequations}\label{eq:FDLMM}
    \begin{equation}\label{eq:FDLMMmeso}
        \frac{\epsi^{2}}{\sol}\left( \frac{\meso^{n+1}_{i} - \meso^{n}_{i}}{\deltat} \right) + \StefBoltzconst\facalph\hspace{.5mm}\absorpcoeff_{i}\DPlanckC^{n}_{i}\meso^{n+1}_{i} + \frac{\pknorm{1}}{2}\DiffO\mick{1,i}^{n+1} = -\absorpcoeff_{i}\meso^{n+1}_{i},
    \end{equation}
    \begin{equation}\label{eq:FDLMMmic}
        \frac{\epsi^{2}}{\sol}\left( \frac{\micvec^{n+1}_{i+1/2} - \micvec^{n}_{i+1/2}}{\deltat} \right) + \epsi\AdvecOp\micvec^{n}_{i+1/2} + \bvec\deltaO\left(\StefBoltzconst\sol\hspace{.5mm}\DPlanckC^{n}_{i+1/2}\Temp^{n}_{i+1/2} + \epsi^{2}\meso^{n}_{i+1/2} \right) = -\absorpcoeff_{i+1/2}\micvec^{n+1}_{i+1/2},
    \end{equation}
    \begin{equation}\label{eq:FDLMMmac}
        \frac{\Temp^{n+1}_{i} - \Temp^{n}_{i}}{\deltat} = \facalph\hspace{.5mm}\absorpcoeff_{i}\meso^{n+1}_{i},
    \end{equation}
\end{subequations}
where,
\begin{align*}
    \Diffm\micvec_{i+1/2} &= \frac{\micvec_{i+1/2} - \micvec_{i-1/2}}{\deltax}, & \qquad\Diffp\micvec_{i+1/2} &= \frac{\micvec_{i+3/2} - \micvec_{i+1/2}}{\deltax},\\
    \DiffO\micvec_{i} &= \frac{\micvec_{i+1/2} - \micvec_{i-1/2}}{\deltax} (= \Diffm\micvec_{i+1/2}), & \qquad\deltaO \Temp_{i+1/2} &= \frac{\Temp_{i+1} - \Temp_{i}}{\deltax},\\
    \AdvecOp\micvec_{i+1/2} &= (\fluxMatp\Diffm + \fluxMatm\Diffp)\micvec_{i+1/2}. &&
\end{align*}

\begin{theorem}\label{theorem:APLin}
In the limit $\epsi \to 0$, the modal macro-micro scheme \eqref{eq:FDLMM} gives a consistent discretization of the diffusion equation 
    \begin{equation*}
    (1 + \frac{2\StefBoltzconst\DPlanckC}{\specheat})\pardift\Temp = \frac{2\StefBoltzconst\sol}{3\specheat}\pardifx\left(\frac{1}{\absorpcoeff}\DPlanckC\pardifx\Temp\right).
\end{equation*}
\end{theorem}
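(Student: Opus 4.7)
The plan is a straightforward asymptotic analysis of the scheme \eqref{eq:FDLMM} as $\epsi \to 0$: since the time derivatives in the meso and mic equations both carry a prefactor $\epsi^{2}/\sol$, passing to the limit collapses \eqref{eq:FDLMMmeso} and \eqref{eq:FDLMMmic} into purely algebraic relations that can be solved sequentially for $\micvec^{n+1}$ and then $\meso^{n+1}$ in terms of $\Temp^{n}$, and then substituted into \eqref{eq:FDLMMmac}. Concretely, setting $\epsi=0$ in \eqref{eq:FDLMMmic} and using $\bvec = (\pknorm{1},0,\ldots,0)^{\top}$ kills every moment but the first and yields the explicit discrete Fick's law
\begin{equation*}
\mick{1,i+1/2}^{n+1} = -\frac{\pknorm{1}\StefBoltzconst\sol}{\absorpcoeff_{i+1/2}}\,\DPlanckC^{n}_{i+1/2}\,\deltaO\Temp^{n}_{i+1/2},
\end{equation*}
which is the discrete incarnation of the Rosseland closure.

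Next, passing $\epsi\to 0$ in \eqref{eq:FDLMMmeso} gives the algebraic relation
\begin{equation*}
(1 + \StefBoltzconst\facalph\DPlanckC^{n}_{i})\,\absorpcoeff_{i}\,\meso^{n+1}_{i} = -\tfrac{\pknorm{1}}{2}\,\DiffO\mick{1,i}^{n+1},
\end{equation*}
into which I substitute the expression from the first step to obtain $\meso^{n+1}_{i}$ entirely in terms of $\Temp^{n}$. Plugging this into \eqref{eq:FDLMMmac}, using $\pknorm{1}^{2} = 2/3$ and $\facalph = 2/\specheat$, and multiplying through by $1 + \StefBoltzconst\facalph\DPlanckC^{n}_{i}$ yields
\begin{equation*}
\left(1 + \frac{2\StefBoltzconst\DPlanckC^{n}_{i}}{\specheat}\right)\frac{\Temp^{n+1}_{i} - \Temp^{n}_{i}}{\deltat} = \frac{2\StefBoltzconst\sol}{3\specheat}\,\DiffO\!\left(\frac{1}{\absorpcoeff_{i+1/2}}\DPlanckC^{n}_{i+1/2}\deltaO\Temp^{n}_{i+1/2}\right),
\end{equation*}
and consistency with the claimed non-linear diffusion equation then follows by a standard Taylor expansion of the staggered central differences $\DiffO$ and $\deltaO$ on the equidistant grid.

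The work is largely careful bookkeeping rather than deep analysis. The two places that require attention are (i) tracking the cell-centre versus cell-interface placement of $\DPlanckC$, $\absorpcoeff$ and $\Temp$ so that $\DiffO$ and $\deltaO$ compose into a second-order central approximation of $\pardifx(\absorpcoeff^{-1}\DPlanckC\pardifx\Temp)$, and (ii) ensuring that the factor $\pknorm{1}^{2}/2 = 1/3$, i.e.\ the familiar Rosseland $1/3$, emerges cleanly from the moment weights. One should also be mindful not to drop the mesoscopic damping term $\StefBoltzconst\facalph\absorpcoeff\DPlanckC\meso$ in \eqref{eq:FDLMMmeso} as higher order, since it is precisely what produces the prefactor $1 + 2\StefBoltzconst\DPlanckC/\specheat$ on the left-hand side of the limit equation.
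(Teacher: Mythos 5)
Your proposal is correct and follows essentially the same route as the paper's proof: take the $\mathcal{O}(\epsi^{0})$ balance in the micro equation to get the discrete Fick's law for $\mick{1}^{n+1}$, substitute into the $\mathcal{O}(\epsi^{0})$ balance of the meso equation (keeping the $\StefBoltzconst\facalph\absorpcoeff\DPlanckC\meso$ term), and insert $\absorpcoeff_{i}\meso_{i}^{n+1}$ into the temperature update. The constants $\pknorm{1}^{2}=2/3$ and $\facalph=2/\specheat$ combine exactly as you describe to give the stated limiting discretization.
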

\begin{proof}
    The $\bigO{\epsi^{0}}$ term in \eqref{eq:FDLMMmic} is given by
\begin{equation}\label{eq:eq5}
    -\absorpcoeff_{i+1/2}\mick{1,i+1/2}^{n+1} = \pknorm{1}\StefBoltzconst\sol\DPlanckC^{n}_{i+1/2}\deltaO\Temp^{n}_{i+1/2}.
\end{equation}
Similarly, the $\bigO{\epsi^{0}}$ term in \eqref{eq:FDLMMmeso} is
\begin{equation*}
    -\absorpcoeff_{i}\meso^{n+1}_{i} = \StefBoltzconst\facalph\hspace{.5mm}\absorpcoeff_{i}\DPlanckC^{n}_{i}\meso^{n+1}_{i} + \frac{\pknorm{1}}{2}\DiffO\mick{1,i}^{n+1} 
\end{equation*}
which on substituting $\mick{1,i+1/2}^{n+1}$ from \eqref{eq:eq5} and collecting $\meso_{i}^{n+1}$ terms on the left hand side yields
\begin{equation*}
    (1+\StefBoltzconst\facalph\DPlanckC^{n}_{i})\absorpcoeff_{i}\meso_{i}^{n+1} = \frac{\pknorm{1}^{2}}{2}\StefBoltzconst\sol\left[ \dfrac{\frac{\DPlanckC^{n}_{i+1/2}}{\absorpcoeff_{i+1/2}}(T^{n}_{i+1} - T^{n}_{i}) - \frac{\DPlanckC^{n}_{i-1/2}}{\absorpcoeff_{i-1/2}}(T^{n}_{i} - T^{n}_{i-1})}{(\deltax)^{2}} \right].
\end{equation*}
Thus, substituting $\absorpcoeff_{i}\meso^{n+1}_{i}$ in \eqref{eq:FDLMMmac}
\begin{equation*}
    (1+\frac{2\StefBoltzconst\DPlanckC^{n}_{i}}{\specheat})\left(\frac{\Temp^{n+1}_{i} - \Temp^{n}_{i}}{\deltat}\right) = \frac{2\StefBoltzconst\sol}{3\specheat}\left[ \dfrac{\frac{\DPlanckC^{n}_{i+1/2}}{\absorpcoeff_{i+1/2}}(T^{n}_{i+1} - T^{n}_{i}) - \frac{\DPlanckC^{n}_{i-1/2}}{\absorpcoeff_{i-1/2}}(T^{n}_{i} - T^{n}_{i-1})}{(\deltax)^{2}} \right],
\end{equation*}
where we re-substitute $\facalph = \frac{2}{\specheat}$. This is a discretization of the limiting diffusion equation with an explicit Euler discretization in time and centered differences for spatial derivatives.
    
\end{proof}
\subsection{Stability analysis}
Next, we investigate the stability of the modal macro-micro scheme \eqref{eq:FDLMM} in energy norm for a linearized version of the problem as described in \cite{JIN2017182}. The linearization assumes that the particles are emitted from the background material proportional to the temperature (instead of the 4$^{th}$ power of temperature as given by the Stefan-Boltzmann law). That is, we set $\Planck = \StefBoltzconst\sol\Temp$ and thus $\DPlanckC = 1$. Other strategies to linearize the problem include the Su-Olsen closure \cite{SU19971035} in which the specific heat, $\specheat$, is assumed to be proportional to $\Temp^{3}$. 

Substituting the value of $\DPlanckC$ in \eqref{eq:FDLMM} we get the following modal macro-micro scheme with linear emission of particles:
\begin{subequations}\label{eq:LinMacMicsys}
    \begin{equation}\label{eq:LinMacMicsysmeso}
        \frac{\epsi^{2}}{\sol}\left( \frac{\meso^{n+1}_{i} - \meso^{n}_{i}}{\deltat} \right) + \StefBoltzconst\facalph\hspace{.5mm}\absorpcoeff_{i}\meso^{n+1}_{i} + \frac{\pknorm{1}}{2}\DiffO\mick{1,i}^{n+1} = -\absorpcoeff_{i}\meso^{n+1}_{i},
    \end{equation}
    \begin{equation}\label{eq:LinMacMicsysmic}
         \frac{\epsi^{2}}{\sol}\left( \frac{\micvec^{n+1}_{i+1/2} - \micvec^{n}_{i+1/2}}{\deltat} \right) + \epsi\AdvecOp\micvec^{n}_{i+1/2} + \bvec\deltaO\left(\StefBoltzconst\sol\hspace{.5mm}\Temp^{n}_{i+1/2} + \epsi^{2}\meso^{n}_{i+1/2} \right) = -\absorpcoeff_{i+1/2}\micvec^{n+1}_{i+1/2},
    \end{equation}
    \begin{equation}\label{eq:LinMacMicsysmac}
        \frac{\Temp^{n+1}_{i} - \Temp^{n}_{i}}{\deltat} = \facalph\hspace{.5mm}\absorpcoeff_{i}\meso^{n+1}_{i}.
    \end{equation}
\end{subequations}
The following norms are defined for the scalar- and vector-valued functions
\begin{equation*}
    \norm{u}^{2} = \sum_{i}u_{i}^{2}\deltax, \qquad \norm{\bphi}^{2} = \sum_{i}(\bphi_{i+1/2}^{\top}\bphi_{i+1/2})\deltax.
\end{equation*}
\noindent Then, for the linearized modal macro-micro scheme, we have the following stability result:
\begin{theorem}\label{theorem:LinMacMicEnStab}
    Assume that the time step $\deltat$ fulfills the CFL condition for all $k$, such that $\quadpt_{k}\neq 0$,
    \begin{equation}\label{eq:CFLCond}
        \deltat \leq \frac{1}{5\sol\beta_{N}}\left( \frac{2\epsi\deltax}{\abs{\quadpt_{k}}} + \frac{\absorpcoefflb\deltax^{2}}{\quadpt_{k}^{2}} \right),
    \end{equation}
    where $\beta_{N} = \underset{k}{\mathrm{max}}\hspace{2mm}\weight_{k}(N+1)$ and $\sol $ is the speed of light. Then, the scheme \eqref{eq:LinMacMicsys} is energy stable, that is, $$\energy^{n+1} \leq \energy^{n},$$ where the energy is defined as
    \begin{equation*}
        \energy^{n} = \norm{\StefBoltzconst\Temp^{n} + \frac{\epsi^{2}}{\sol}\meso^{n}}^{2} + \norm{\frac{\epsi}{\pknorm{0}\sol}\micmat^{n}}^{2} + \norm{\sqrt{\frac{\StefBoltzconst\specheat}{2}}\Temp^{n}}^{2}.
    \end{equation*}
\end{theorem}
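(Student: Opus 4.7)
The plan is to derive a discrete energy identity by testing each equation of \eqref{eq:LinMacMicsys} with a suitable implicit quantity, summing over the spatial grid, and applying the identity $2a(a-b) = a^{2} - b^{2} + (a-b)^{2}$ to extract time differences of squared norms. Abbreviating $U^{n+1}_{i} := \StefBoltzconst\Temp^{n+1}_{i} + \frac{\epsi^{2}}{\sol}\meso^{n+1}_{i}$, I would multiply \eqref{eq:LinMacMicsysmeso} by $2U^{n+1}_{i}\deltax$, \eqref{eq:LinMacMicsysmac} by $2\StefBoltzconst U^{n+1}_{i}\deltax + \StefBoltzconst\specheat\Temp^{n+1}_{i}\deltax$, and \eqref{eq:LinMacMicsysmic} by $\frac{2}{\pknorm{0}^{2}\sol}\micvec^{n+1,\top}_{i+1/2}\deltax$. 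Because $\facalph = 2/\specheat$, the emission coupling $2\StefBoltzconst\facalph\absorpcoeff\meso^{n+1}\Temp^{n+1}$ generated by the meso test cancels exactly against its counterpart from the $\StefBoltzconst\specheat\Temp^{n+1}$ mac test, and the mac/meso combination leaves the time differences of $\norm{U^{n+1}}^{2}$ and $\frac{\StefBoltzconst\specheat}{2}\norm{\Temp^{n+1}}^{2}$ together with a clean absorption dissipation $-\frac{2\epsi^{2}}{\sol}\sum_{i}\absorpcoeff_{i}(\meso^{n+1}_{i})^{2}\deltax$.

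Next I would use summation by parts on the staggered grid to rewrite $-\pknorm{1}\sum_{i}U^{n+1}_{i}\DiffO\mick{1,i}^{n+1}\deltax = \pknorm{1}\sum_{i}\mick{1,i+1/2}^{n+1}\deltaO U^{n+1}_{i+1/2}\deltax$ and pair it with the $\bvec$-coupling term from the mic test. Using $\bvec = (\pknorm{1},0,\ldots,0)^{\top}$ and $\pknorm{0}^{2} = 2$, so that $2\pknorm{1}/\pknorm{0}^{2} = \pknorm{1}$, that coupling reads $\pknorm{1}\sum_{i}\mick{1,i+1/2}^{n+1}\deltaO U^{n}_{i+1/2}\deltax$, which has the same form but at time $n$. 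The net contribution is therefore the time-lag term $\pknorm{1}\sum_{i}\mick{1,i+1/2}^{n+1}\deltaO(U^{n+1}-U^{n})_{i+1/2}\deltax$, which by the combined update $\frac{U^{n+1}-U^{n}}{\deltat} = -\frac{\pknorm{1}}{2}\DiffO\mick{1}^{n+1} - \absorpcoeff\meso^{n+1}$ decomposes into a $\frac{\pknorm{1}^{2}\deltat}{2}\sum_{i}|\DiffO\mick{1,i}^{n+1}|^{2}\deltax$ parabolic remainder (after one more summation by parts) plus a mixed $\meso^{n+1}$--$\mick{1}^{n+1}$ piece that is absorbed into the available $\meso$-dissipation via Young's inequality.

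The main obstacle is the explicit hyperbolic advection $\frac{2\epsi}{\pknorm{0}^{2}\sol}\sum_{i}\micvec^{n+1,\top}_{i+1/2}\AdvecOp\micvec^{n}_{i+1/2}\deltax$. I would diagonalize using the factorizations $\fluxMat = \Tmat\quadptMat\Tmat^{\top}$, $\abs{\fluxMat} = \Tmat\absquadMat\Tmat^{\top}$, $\fluxMatpm = \frac{1}{2}\Tmat(\quadptMat\pm\absquadMat)\Tmat^{\top}$ and change variables $\bzeta := \Tmat^{\top}\micvec \in \R^{N+1}$. The transformed microscopic system decouples into $N+1$ scalar upwind advection--absorption equations with characteristic speed $\quadpt_{k}$, for each of which the classical upwind identity together with the implicit absorption dissipation yields a mode-wise bound matching \eqref{eq:CFLCond}; the factor $\beta_{N} = \max_{k}\weight_{k}(N+1)$ arises when converting mode-wise $\bzeta$-norms back into moment norms of $\micvec$ via a quadrature-weighted pointwise bound on the orthonormal Legendre polynomials at the Gauss--Legendre nodes. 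The hardest step will be orchestrating Young-inequality constants so that all positive remainders -- the mode-wise hyperbolic advection errors, the parabolic $\DiffO\mick{1}^{n+1}$ remainder, and the three quadratic residuals $\norm{U^{n+1}-U^{n}}^{2}$, $\norm{\Temp^{n+1}-\Temp^{n}}^{2}$, $\norm{\micmat^{n+1}-\micmat^{n}}^{2}$ -- are dominated by the implicit absorption dissipations under \eqref{eq:CFLCond}; the prefactor $1/5$ is expected to emerge from this weighted arithmetic--geometric balance, after which $\energy^{n+1} - \energy^{n}$ is a sum of manifestly nonpositive terms.
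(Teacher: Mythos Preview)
Your overall architecture matches the paper's proof: test each equation against the implicit quantity, use the identity $2a(a-b)=a^2-b^2+(a-b)^2$, couple via summation by parts on the staggered grid, and diagonalize the advection part through $\fluxMat=\Tmat\quadptMat\Tmat^\top$ to obtain mode-wise conditions in $\quadpt_k$. The paper organizes this through the auxiliary Lemmas~\ref{lemma:SumbyParts}--\ref{lemma:boundedness} (summation by parts, positivity of $\AdvecOp$, boundedness by $\beta_N$, and the $\mathrm{P}_N$-preservation identities), but your informal description captures the same mechanisms.

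The one place where your plan genuinely differs, and where it runs into trouble, is the treatment of the time-lag coupling $\pknorm{1}\sum_i \mick{1,i+1/2}^{n+1}\,\deltaO(U^{n+1}-U^{n})_{i+1/2}\deltax$. You propose to \emph{substitute} the combined update $U^{n+1}-U^{n}=-\deltat\bigl(\tfrac{\pknorm{1}}{2}\DiffO\mick{1}^{n+1}+\absorpcoeff\meso^{n+1}\bigr)$, producing the parabolic remainder plus a mixed $\meso^{n+1}$--$\mick{1}^{n+1}$ cross term that you then want to absorb into the ``available $\meso$-dissipation.'' That dissipation, however, is $-\tfrac{2\epsi^{2}}{\sol}\sum_i\absorpcoeff_i(\meso_i^{n+1})^2\deltax$ and vanishes as $\epsi\to 0$, while the cross term does not; Young's inequality against it cannot close uniformly in $\epsi$ under the stated CFL condition \eqref{eq:CFLCond}. (One can rescue this by rewriting $\absorpcoeff\meso^{n+1}=\tfrac{1}{\facalph}\tfrac{\Temp^{n+1}-\Temp^{n}}{\deltat}$ and absorbing against the $\|\Temp^{n+1}-\Temp^{n}\|^2$ residual instead, but then the resulting constant picks up a factor depending on $\StefBoltzconst\facalph$, and you no longer recover the clean $1/5$ in \eqref{eq:CFLCond}.)

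The paper avoids this entirely: it does \emph{not} substitute, but applies Young's inequality directly to the time-lag term with weight $\alpha=\tfrac{1}{2\deltat}$, so that $\alpha\|U^{n+1}-U^{n}\|^{2}$ cancels exactly against the residual coming from $2a(a-b)$, and the only leftover is the pure parabolic term $\tfrac{\deltat}{2}\tfrac{\pknorm{1}^{2}}{4}\sum_i(\DiffO\mick{1,i}^{n+1})^{2}\deltax$, with no $\meso^{n+1}$ contamination. This is the key simplification you are missing; once it is in place, the rest of your outline (diagonalization, mode-wise CFL) goes through exactly as in the paper.
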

\begin{remark}
    For the sake of compactness, the proof of this theorem, along with all the required lemmas, are presented in \Cref{appendix:ProofLinMMMstab}. The proof follows the energy stability result in \cite{JIN2017182} and combines it with the results obtained for the modal macro-micro scheme for radiation transport from \cite{einkemmer2022asymptoticpreserving}. It is roughly divided into three parts; the first part bounds $\norm{\StefBoltzconst\Temp^{n+1} + \frac{\epsi^{2}}{\sol}\meso^{n+1}}^{2} + \norm{\frac{\epsi}{\pknorm{0}\sol}\micmat^{n+1}}^{2}$ from above using \eqref{eq:LinMacMicsysmeso} and \eqref{eq:LinMacMicsysmic}. In the second part we derive an upper bound for $\norm{\sqrt{\frac{a\specheat}{2}}\Temp^{n+1}}^{2}$ from \eqref{eq:LinMacMicsysmac}. Combining the bounds obtained in the first and the second part, we show energy stability subject to step size restriction given by the CFL condition \eqref{eq:CFLCond} in the third part of the proof. 
\end{remark}


\section{Dynamical low-rank approximation for the modal macro-micro equations}\label{section:DLRAforMMM}
\noindent The macro-micro decomposition \cite{MR2460781,MR1842224} allows us to construct an asymptotic-preserving and energy-stable numerical algorithm for the thermal radiative transfer equations. However, the microscopic variable $\mic$ is still a high-dimensional quantity since it depends on time, space, and direction of flight. Thus, to reduce computational costs, we use dynamical low-rank approximation \cite{doi:10.1137/050639703} to approximate the solution of $\mic$ by a low-rank factorization. This section is divided into two subsections; in the first subsection, we derive evolution equations for the low-rank factorization of $\mic$ for the fixed-rank BUG integrator \cite{doi:10.1007/s10543-021-00873-0}. We present an asymptotic-preserving spatio-temporal discretization and show that the numerical scheme is energy stable for the linearization presented in \Cref{section:EnergyStableMMM}. In the second subsection, we extend the scheme to the augmented BUG integrator \cite{robustBUG}.

Consider the microscopic equation \eqref{eq:ModalMacMicSysmic} given by
\begin{equation}
    \frac{\epsi^{2}}{\sol}\pardift\micvec + \epsi\fluxMat\pardifx\micvec + \bvec\left(\StefBoltzconst\sol\DPlanckC\hspace{.5mm}\pardifx\Temp + \epsi^{2}\pardifx\meso \right) = -\absorpcoeff\micvec.
\end{equation}
The low-rank ansatz for the microscopic variable $\micvec$ reads:
\begin{equation*}\label{eq:DLRAansatz}  
    \micvec(\vart,\varx) \approx \sum_{p,q = 1}^{\rank}\Xc_{p}(\vart,\varx)\Sc_{pq}(\vart)\Vc_{q}(\vart)^{\top},
\end{equation*}
where $\rank\in\N$ is some given rank and $\Vc_{q} = (V_{1,q},\ldots,V_{N,q})^{\top} \in \R^{N} $.
Thus, we can write the above sum as
\begin{equation*}
    \micvec(\vart,\varx) = \Xcvec(\vart,\varx)^{\top}\Smat(\vart)\Vmat(\vart)^{\top},
\end{equation*}
where
\begin{equation*}
    \Xcvec = \left( \Xc_{1},\ldots,\Xc_{r} \right)^{\top}\in\R^{\rank}, \quad \Smat = \left( \Sc_{pq} \right)_{p,q=1}^{\rank}\in\R^{\rank\times\rank}, \quad \Vmat = \begin{bmatrix}
        \Vc_{1} & \ldots & \Vc_{r}
    \end{bmatrix}\in\R^{N\times\rank}.
\end{equation*}
\subsection{Fixed-rank modal macro-micro BUG integrator}\label{section:BUGforMMM}
With this low-rank ansatz for $\micvec$ we can now write down the individual steps of the fixed-rank BUG integrator \cite{doi:10.1007/s10543-021-00873-0} for updating the microscopic variable. To this end let the solution at time $\vart_{n}$ be given by $\micvec^{n}(\varx) = \Xcvec^{n}(\varx)^{\top}\Smat^{n}\Vmat^{n,\top} $, then the evolution equations for updating $\Xcvec, \Smat, \Vmat$ are as follows:
\begin{itemize}
    \item[\textbf{K-step}]  For $\Kcvec(\vart,\varx)^{\top} = \Xcvec(\vart,\varx)^{\top}\Smat(t) $ solve
\begin{equation*}
    \frac{\epsi^{2}}{\sol}\pardift\Kcvec(\vart,\varx) = -\epsi\left[\Vmat^{n,\top}\fluxMat\Vmat^{n} \right]\pardifx\Kcvec(\vart,\varx) - \Vmat^{n,\top}\bvec(\StefBoltzconst\sol\DPlanckC\pardifx\Temp + \epsi^{2}\pardifx\meso) - \absorpcoeff\Kcvec(\vart,\varx),
\end{equation*}
with the initial condition $\Kcvec(\vart_{n},\varx) = \Xcvec^{n}(\varx)^{\top}\Smat^{n}$. We denote the updated spatial basis vectors by $\Xcvec^{n+1}(\varx)$ which is obtained as the orthonormal basis of $\Kcvec(\vart_{n+1},x)$.

\item[\textbf{L-step}] For $\Lmat(\vart) = \Vmat(\vart)\Smat(\vart)^{\top}$ solve
\begin{equation*}
    \frac{\epsi^{2}}{\sol}\dot{\Lmat}(\vart) = -\epsi\fluxMat^{\top}\Lmat(\vart)\intgx{\pardifx\Xcvec^{n},\Xcvec^{n,\top}} - \bvec\intgx{\StefBoltzconst\sol\DPlanckC\pardifx\Temp + \epsi^{2}\pardifx\meso,\Xcvec^{n,\top}}.
\end{equation*}
where $\intgx{\cdot,\cdot} $ denotes the $\LL$ - inner product over the spatial domain, and we have the initial condition given by $ \Lmat(\vart_{n}) = \Vmat^{n}\Smat^{n,\top} $. We denote the updated angular basis matrix by $\Vmat^{n+1}$, which is obtained as the orthonormal basis of $\Lmat(\vart_{n+1})$.

\item[\textbf{S-step}] Perform a Galerkin step in the updated spatial and angular basis according to 
\begin{equation*}
    \begin{aligned}
        \frac{\epsi^{2}}{\sol}\dot{\Smat}(\vart) &= -\epsi\intgx{\Xcvec^{n+1},\pardifx\Xcvec^{n+1,\top}}\Smat(\vart)\Vmat^{n+1,\top}\fluxMat\Vmat^{n+1} - \intgx{\Xcvec^{n+1},\StefBoltzconst\sol\DPlanckC\pardifx\Temp + \epsi^{2}\pardifx\meso}\bvec^{\top}\Vmat^{n+1}\\
        &\qquad -\intgx{\absorpcoeff\Xcvec^{n+1},\Xcvec^{n+1,\top}}\Smat(\vart),
    \end{aligned}
\end{equation*}
with the initial condition $\Smat(\vart_{n}) = \intgx{\Xcvec^{n+1},\Xcvec^{n,\top}}\Smat^{n}\Vmat^{n,\top}\Vmat^{n+1} $.

\end{itemize}


\subsubsection{Spatio-temporal discretization}
The update equations for $\Temp$ and $\meso$ from \Cref{subsection:Modal MM} along with the evolution equations for the low-rank factors of $\micvec$ give the fixed-rank modal macro-micro BUG equations for the thermal radiative transfer equations. Similar to \Cref{section:EnergyStableMMM}, we discretize the fixed-rank modal macro-micro BUG equations in space and time. First, we define
\begin{equation*}
    \Xvec^{n}_{i+1/2} = \frac{1}{\deltax}\int_{x_{i}}^{x_{i+1}}\Xcvec(\vart_{n},\varx) \dx{1}
\end{equation*}
and $\Kvec_{i+1/2}(\vart) = \Xvec_{i+1/2}(\vart)^{\top}\Smat(\vart)\in\R^{\rank} $. Then, for the prescribed data $\Xvec^{n},\Vmat^{n},\Smat^{n},\meso^{n},\Temp^{n} $ at time $\vart_{n}$ the fixed-rank modal macro-micro BUG scheme updates the solution at time $\vart_{n}$ through the following steps,
\begin{itemize}
    \item[\textbf{K-step}] Update
    \begin{equation}\label{eq:FDKstep}
         \frac{\epsi^{2}}{\sol}\left[\frac{\Kvec^{n+1}_{i+1/2} -\Kvec^{n}_{i+1/2}}{\deltat}\right] = -\epsi\AdvecOpK\Kvec^{n}_{i+1/2} - \Vmat^{n,\top}\bvec\deltaO(\StefBoltzconst\sol\DPlanckC^{n}_{i+1/2}\Temp^{n}_{i+1/2} + \epsi^{2}\meso^{n}_{i+1/2}) - \absorpcoeff_{i+1/2}\Kvec^{n+1}_{i+1/2},
    \end{equation}
    where $\Kvec^{n,\top}_{i+1/2} = \Xvec^{n,\top}_{i+1/2}\Smat^{n} $ and
    \begin{equation*}
        \AdvecOpK\Kvec^{n}_{i+1/2} = \left[ \Vmat^{n,\top}\fluxMatp\Vmat^{n} \right] \Diffm\Kvec^{n}_{i+1/2}+ \left[ \Vmat^{n,\top}\fluxMatm\Vmat^{n} \right]\Diffp\Kvec^{n}_{i+1/2}.
    \end{equation*}
    Compute $\Xvec^{n+1}_{i+1/2}$ as the orthonormal basis of $\Kvec^{n+1}_{i+1/2}$.

    \item[\textbf{L-step}] Update
    \begin{equation}\label{eq:FDLstep}
    \begin{aligned}
        \frac{\epsi^{2}}{\sol}\left[ \frac{\Lmat^{n+1} - \Lmat^{n}}{\deltat} \right] &= -\epsi\AdvecOpL\Lmat^{n} - \bvec\sum_{i}\Xvec^{n,\top}_{i+1/2}\deltaO(\StefBoltzconst\sol\DPlanckC^{n}_{i+1/2}\Temp^{n}_{i+1/2} + \epsi^{2}\meso^{n}_{i+1/2})\\ &\qquad - \Lmat^{n+1}\sum_{i}\absorpcoeff_{i+1/2}\Xvec^{n}_{i+1/2}\Xvec^{n,\top}_{i+1/2}
    \end{aligned}
    \end{equation}
    where $\Lmat^{n}  = \Vmat^{n}\Smat^{n,\top} $ and
    \begin{equation*}
        \AdvecOpL\Lmat^{n} = \fluxMatp\Lmat^{n}\sum_{i}\Diffm\Xvec^{n}_{i+1/2}\Xvec^{n,\top}_{i+1/2} + \fluxMatm\Lmat^{n}\sum_{i}\Diffp\Xvec^{n}_{i+1/2}\Xvec^{n,\top}_{i+1/2}.
    \end{equation*}
    Compute $\Vmat^{n+1}$ as the orthonormal basis of $\Lmat^{n+1}$.
    
    \item[\textbf{S-step}]  Update
    \begin{equation}\label{eq:FDSstep}
    \begin{aligned}
        \frac{\epsi^{2}}{\sol}\left[ \frac{\Smat^{n+1} - \Smatinit^{n}}{\deltat} \right] &= -\epsi\AdvecOpS\Smatinit^{n} - \sum_{i}\Xvec^{n+1}_{i+1/2}\deltaO(\StefBoltzconst\sol\DPlanckC^{n}_{i+1/2}\Temp^{n}_{i+1/2} + \epsi^{2}\meso^{n}_{i+1/2})\bvec^{\top}\Vmat^{n+1} \\ & \qquad - \sum_{i}\absorpcoeff_{i+1/2}\Xvec^{n+1}_{i+1/2}\Xvec^{n+1,\top}_{i+1/2}\Smat^{n+1}
    \end{aligned}
    \end{equation}
    where $\Smatinit^{n} = \sum_{j}\Xcvec^{n+1}_{j+1/2}\Xcvec^{n,\top}_{j+1/2}\Smat^{n}\Vmat^{n,\top}\Vmat^{n+1} $ and
    \begin{equation*}
    \begin{aligned}
        \AdvecOpS\Smat^{n} &= \sum_{i}\Xvec^{n+1}_{i+1/2}\Diffm\Xvec^{n+1,\top}_{i+1/2}\Smat^{n}\Vmat^{n+1,\top}\fluxMatp\Vmat^{n+1}\\ &\qquad + \sum_{i}\Xvec^{n+1}_{i+1/2}\Diffp\Xvec^{n+1,\top}_{i+1/2}\Smat^{n}\Vmat^{n+1,\top}\fluxMatm\Vmat^{n+1},
    \end{aligned}
    \end{equation*}

    \item[] Update $\Temp,\meso$:
    \begin{equation}\label{eq:DLRAmesotFD}
        \frac{\epsi^{2}}{\sol}\left( \frac{\meso^{n+1}_{i} - \meso^{n}_{i}}{\deltat} \right) + \StefBoltzconst\facalph\hspace{.5mm}\absorpcoeff_{i}\DPlanckC^{n}_{i}\meso^{n+1}_{i} + \frac{\pknorm{1}}{2}\DiffO\Xvec^{n+1,\top}_{i+1/2}\Smat^{n+1}\Vmat^{n+1,\top}\unitvecN = -\absorpcoeff_{i}\meso^{n+1}_{i},
    \end{equation}
    \begin{equation}\label{eq:DLRAmacFD}
        \frac{\Temp^{n+1}_{i} - \Temp^{n}_{i}}{\deltat} = \facalph\hspace{.5mm}\absorpcoeff_{i}\meso^{n+1}_{i}.
    \end{equation}
    
\end{itemize}

\begin{theorem}\label{theorem:APBUG}
    In the limit $\epsi \to 0$, the fixed-rank modal macro-micro BUG scheme given by \cref{eq:FDKstep,eq:FDLstep,eq:FDSstep,eq:DLRAmesotFD,eq:DLRAmacFD} gives a consistent discretization of the diffusion equation 
    \begin{equation*}
    \left(1 + \frac{2\StefBoltzconst\DPlanckC}{\specheat}\right)\pardift\Temp = \frac{2\StefBoltzconst\sol}{3\specheat}\pardifx\left(\frac{1}{\absorpcoeff}\pardifx\Temp\right).
    \end{equation*}
\end{theorem}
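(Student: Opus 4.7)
The strategy mirrors the proof of \Cref{theorem:APLin}: extract the leading $\bigO{\epsi^{0}}$ balance in each sub-step of the BUG integrator, identify the asymptotic form of the microscopic moment $\mick{1,i+1/2}^{n+1} = \Xvec^{n+1,\top}_{i+1/2}\Smat^{n+1}\Vmat^{n+1,\top}\unitvecN$, and substitute it into \eqref{eq:DLRAmesotFD}--\eqref{eq:DLRAmacFD} to recover the diffusion discretization. What is new is verifying that the low-rank factors reproduce the pointwise asymptotic microscopic moment that the full-order scheme would produce.

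I would begin with the K-step \eqref{eq:FDKstep}. Setting $\epsi=0$ and using $\bvec = \pknorm{1}\unitvecN$ gives
\begin{equation*}
\absorpcoeff_{i+1/2}\,\Kvec^{n+1}_{i+1/2} \;=\; -\pknorm{1}\,(\Vmat^{n,\top}\unitvecN)\,\deltaO\!\left(\StefBoltzconst\sol\,\DPlanckC^{n}_{i+1/2}\Temp^{n}_{i+1/2}\right).
\end{equation*}
Viewed as an $\Nx\times\rank$ matrix, $\Kmat^{n+1}$ therefore has rank one and its column space is spanned by $\vec{g}^{\,n}\in\R^{\Nx}$ with entries $g^{\,n}_{i} = -\pknorm{1}\,\deltaO(\StefBoltzconst\sol\,\DPlanckC^{n}_{i+1/2}\Temp^{n}_{i+1/2})/\absorpcoeff_{i+1/2}$; in particular $\vec{g}^{\,n}\in\mathrm{range}(\Xmat^{n+1})$. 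An analogous reduction of the L-step \eqref{eq:FDLstep} shows that $\Lmat^{n+1}$ is rank one with column space spanned by $\bvec$, so $\unitvecN\in\mathrm{range}(\Vmat^{n+1})$ and consequently $\Vmat^{n+1}\Vmat^{n+1,\top}\unitvecN = \unitvecN$.

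Next I would take the $\bigO{\epsi^{0}}$ limit of the S-step \eqref{eq:FDSstep} and right-multiply by $\Vmat^{n+1,\top}\unitvecN$. Using $\bvec^{\top}\Vmat^{n+1}\Vmat^{n+1,\top}\unitvecN = \pknorm{1}$, this collapses to the $\absorpcoeff$-weighted Galerkin equation
\begin{equation*}
\Xmat^{n+1,\top}\,\mathrm{diag}(\absorpcoeff)\,\Xmat^{n+1}\,\vec{\mu} \;=\; -\pknorm{1}\,\Xmat^{n+1,\top}\vec{r}, \qquad \vec{\mu} := \Smat^{n+1}\Vmat^{n+1,\top}\unitvecN,
\end{equation*}
with $r_{i} = \deltaO(\StefBoltzconst\sol\,\DPlanckC^{n}_{i+1/2}\Temp^{n}_{i+1/2})$. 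Since $\vec{g}^{\,n} = -\pknorm{1}\vec{r}/\absorpcoeff$ (componentwise) lies in $\mathrm{range}(\Xmat^{n+1})$ by the K-step analysis, and $\Xmat^{n+1,\top}\mathrm{diag}(\absorpcoeff)\Xmat^{n+1}$ is positive definite (using $\absorpcoeff\geq\absorpcoefflb>0$), the weighted projection recovers $\vec{g}^{\,n}$ exactly. This yields the pointwise identity $\mick{1,i+1/2}^{n+1} = g^{\,n}_{i}$, matching \eqref{eq:eq5} from the full-scheme proof.

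With this identity in hand, substituting $\mick{1,i+1/2}^{n+1}$ into \eqref{eq:DLRAmesotFD} and eliminating $\meso^{n+1}_{i}$ via \eqref{eq:DLRAmacFD} reproduces the algebra of \Cref{theorem:APLin} verbatim and yields the claimed diffusion discretization. The main obstacle is the third step above: showing that the weighted Galerkin projection in the S-step is \emph{exact}, not merely a best-fit. This rests on two structural facts, namely that the source terms in both the K- and L-steps are rank one in the stiff limit, and that the inhomogeneity $\bvec = \pknorm{1}\unitvecN$ occupies a one-dimensional moment subspace. Together they guarantee that $\Xmat^{n+1}$ always contains the target function needed by the S-step, so no information is lost under low-rank truncation.
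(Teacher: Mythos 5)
Your proposal is correct and follows essentially the same route as the paper: extract the $\bigO{\epsi^{0}}$ balances of the K- and L-steps to show $(\absorpcoeff)^{-1}$-scaled temperature gradients and $\bvec$ lie in the ranges of $\Xmat^{n+1}$ and $\Vmat^{n+1}$, then use positive definiteness of the $\absorpcoeff$-weighted Gram matrix in the S-step to conclude the low-rank reconstruction reproduces the full scheme's limiting micro solution, and finish via \Cref{theorem:APLin}. The only cosmetic difference is that the paper recovers the entire vector $\micvec^{n+1}_{i+1/2}$ while you recover only the first moment by projecting onto $\unitvecN$, which suffices since \eqref{eq:DLRAmesotFD}--\eqref{eq:DLRAmacFD} use only that moment; both arguments share the same implicit non-degeneracy assumptions (e.g.\ $\Vmat^{n,\top}\bvec\neq \textbf{0}$).
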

\begin{proof}
    As $\epsi\to 0$, from the K-step \eqref{eq:FDKstep} and L-step \eqref{eq:FDLstep} we obtain
    \begin{equation*}\label{eq:DLRAlinAP1}
        \Vmat^{n,\top}\bvec\DPlanckC^{n}_{i+1/2}\deltaO(\StefBoltzconst\sol\Temp^{n}_{i+1/2}) = -\absorpcoeff_{i+1/2}\Kvec^{n+1}_{i+1/2}
    \end{equation*}
    and 
    \begin{equation*}\label{eq:DLRAlinAP2}
        \Lmat^{n+1}\sum_{i}\absorpcoeff_{i+1/2}\Xvec^{n}_{i+1/2}\Xvec^{n,\top}_{i+1/2} = -\bvec\sum_{i}\Xvec^{n,\top}_{i+1/2}\DPlanckC^{n}_{i+1/2}\deltaO(\StefBoltzconst\sol\Temp^{n}_{i+1/2}).
    \end{equation*}
    If $\Kvec^{n+1}_{i+1/2} $ is factorized as $\Kvec^{n+1,\top}_{i+1/2} = \Xvec^{n+1,\top}_{i+1/2}\Smat_{K} $ then $\frac{\DPlanckC^{n}_{i+1/2}}{\absorpcoeff_{i+1/2}}\deltaO(\StefBoltzconst\sol\Temp^{n}_{i+1/2})  $ lies in the range space of $\Xvec^{n+1}_{i+1/2}$. Similarly, if $\Lmat^{n+1} = \Vmat^{n+1}\Smat_{L}^{\top} $ and $\left( \Smat_{L}^{\top}\sum_{i}\absorpcoeff_{i+1/2}\Xvec^{n}_{i+1/2}\Xvec^{n,\top}_{i+1/2} \right) $ is invertible, then $ \bvec $ lies in the range space of $\Vmat^{n+1}$.

    Now as $\epsi\to 0$, from the S-step we obtain
    \begin{equation}\label{eq:DLRAlinAP3}
        - \left(\sum_{i}\DPlanckC^{n}_{i+1/2}\deltaO(\StefBoltzconst\sol\Temp^{n}_{i+1/2})\Xvec^{n+1}_{i+1/2}\right)\left(\bvec^{\top}\Vmat^{n+1}\right) = \left(\sum_{i}\absorpcoeff_{i+1/2}\Xvec^{n+1}_{i+1/2}\Xvec^{n+1,\top}_{i+1/2}\right)\Smat^{n+1}.
    \end{equation}
    Note that since $\frac{\DPlanckC^{n}_{i+1/2}}{\absorpcoeff_{i+1/2}}\deltaO(\StefBoltzconst\sol\Temp^{n}_{i+1/2}) = \left[\sum_{j}\frac{\DPlanckC^{n}_{j+1/2}}{\absorpcoeff_{j+1/2}}\deltaO(\StefBoltzconst\sol\Temp^{n}_{j+1/2})\Xvec^{n+1,\top}_{j+1/2} \right]\Xvec^{n+1}_{i+1/2} $ we have 
    \begin{equation*}\label{eq:DLRAlinAP4}
    \begin{aligned}
        \sum_{i}\deltaO(\StefBoltzconst\sol\DPlanckC^{n}_{i+1/2}\Temp^{n}_{i+1/2})\Xvec^{n+1}_{i+1/2} &= \sum_{i}\absorpcoeff_{i+1/2}\left(\frac{\DPlanckC^{n}_{i+1/2}}{\absorpcoeff_{i+1/2}}\deltaO(\StefBoltzconst\sol\Temp^{n}_{i+1/2})\Xvec^{n+1}_{i+1/2} \right)\\
        & = \left(\sum_{i}\absorpcoeff_{i+1/2}\Xvec^{n+1}_{i+1/2}\Xvec^{n+1,\top}_{i+1/2}\right)\left(\sum_{j}\frac{\DPlanckC^{n}_{j+1/2}}{\absorpcoeff_{j+1/2}}\deltaO(\StefBoltzconst\sol\Temp^{n}_{j+1/2})\Xvec^{n+1}_{j+1/2}\right).
    \end{aligned}
    \end{equation*}
    Hence, \eqref{eq:DLRAlinAP3} becomes
    \begin{equation*}\label{eq:DLRAlinAP5}
        \Smat^{n+1} = - \left(\sum_{j}\frac{\DPlanckC^{n}_{j+1/2}}{\absorpcoeff_{j+1/2}}\deltaO(\StefBoltzconst\sol\Temp^{n}_{j+1/2})\Xvec^{n+1}_{j+1/2}\right)\left(\bvec^{\top}\Vmat^{n+1}\right)
    \end{equation*}
    and since $\frac{\DPlanckC^{n}_{i+1/2}}{\absorpcoeff_{i+1/2}}\deltaO(\StefBoltzconst\sol\Temp^{n}_{i+1/2})$ and $\bvec$ lie in the range of the updated spatial and angular basis, scalar multiplication with $\Xvec^{n+1,\top}_{i+1/2}$ and $\Vmat^{n+1,\top}$ from the left and right implies
    \begin{equation}
        \micvec^{n+1}_{i+1/2} = -\frac{\DPlanckC^{n}_{i+1/2}}{\absorpcoeff_{i+1/2}}\deltaO(\StefBoltzconst\sol\Temp^{n}_{i+1/2})\bvec^{\top}.
    \end{equation}
    The rest of the proof follows along the lines of \Cref{theorem:APLin}. 
\end{proof}


\subsubsection{Energy stability}

Next, we investigate the stability of the fixed-rank modal macro-micro BUG scheme in energy norm for the linearized problem \eqref{eq:LinMacMicsys}. For the following decomposition of the micro variable
\begin{equation*}
    \micmat^{n} = \begin{bmatrix}
        \micvec^{n}_{1/2}\\
        \vdots\\
        \micvec^{n}_{\Nx + 1/2}
    \end{bmatrix} = \Xmat^{n}\Smat^{n}\Vmat^{n,\top},
\end{equation*}
the norm is defined as
\begin{equation*}
    \norm{\micmat^{n}}^{2} = \norm{\Xmat^{n}\Smat^{n}\Vmat^{n,\top}}^{2}_{F} \deltax 
\end{equation*}
Additionally, we state the following property that we use in the proof of energy stability:
\begin{property}\label{lemma:sumofsquares}
    For any $\{ c_{i}\}_{i=1,\ldots,\Nx}\in\R $ and $\{d_{i}\}_{i=1,\ldots,\Nx}\in\R $ we have
    \begin{equation*}
        \sum_{i}c_{i}d_{i} = \frac{1}{2}\sum_{i}c^{2}_{i} + \frac{1}{2}\sum_{i}d^{2}_{i} - \frac{1}{2}\sum_{i}(c_{i}-d_{i})^{2}. 
    \end{equation*}
\end{property}

\begin{theorem}\label{theorem:DLRALinMacMicEnStab}
    Assume that the time step $\deltat$ fulfills the CFL condition \eqref{eq:CFLCond} from \Cref{theorem:LinMacMicEnStab}. Then, the fixed-rank modal macro-micro BUG scheme given by \cref{eq:FDKstep,eq:FDLstep,eq:FDSstep,eq:DLRAmesotFD,eq:DLRAmacFD} is energy stable for the linearised problem \eqref{eq:LinMacMicsys}, that is, $$\energy^{n+1} \leq \energy^{n},$$ where the energy is defined as
    \begin{equation*}\label{eq:ESDLRAthm2}
        \energy^{n} = \norm{\StefBoltzconst\Temp^{n} + \frac{\epsi^{2}}{\sol}\meso^{n}}^{2} + \norm{\frac{\epsi}{\pknorm{0}\sol}{\normalfont\Xmat^{n}\Smat^{n}\Vmat^{n,\top}}}^{2} + \norm{\sqrt{\frac{\StefBoltzconst\specheat}{2}}\Temp^{n}}^{2}.
    \end{equation*}
\end{theorem}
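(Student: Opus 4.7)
The plan is to mirror the three-part proof of Theorem \ref{theorem:LinMacMicEnStab}, exploiting the Galerkin structure of the S-step so that the low-rank updates reduce to orthogonally projected versions of the full modal macro--micro scheme \eqref{eq:LinMacMicsys}. The central observation is that $\Xmat^{n+1}$ and $\Vmat^{n+1}$ have orthonormal columns, so $\norm{\Xmat^{n+1}\Smat\Vmat^{n+1,\top}}_{F}=\norm{\Smat}_{F}$ for any $\Smat\in\R^{\rank\times\rank}$. I would pre- and post-multiply the S-step \eqref{eq:FDSstep} by $\Xmat^{n+1}$ and $\Vmat^{n+1,\top}$ to obtain an evolution equation for $\micmat^{n+1}=\Xmat^{n+1}\Smat^{n+1}\Vmat^{n+1,\top}$ that has the structure of \eqref{eq:LinMacMicsysmic}, but with the advective operator and the source/absorption terms sandwiched between the orthogonal projectors $\Xmat^{n+1}\Xmat^{n+1,\top}$ and $\Vmat^{n+1}\Vmat^{n+1,\top}$. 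An important preliminary is that the S-step initial datum $\Smatinit^{n}$ encodes the $\LL$-orthogonal projection of $\Xmat^{n}\Smat^{n}\Vmat^{n,\top}$ onto these updated subspaces, so $\norm{\Xmat^{n+1}\Smatinit^{n}\Vmat^{n+1,\top}}\le\norm{\Xmat^{n}\Smat^{n}\Vmat^{n,\top}}=\norm{\micmat^{n}}$; this is the only concession paid for replacing $\micmat^{n}$ in the Galerkin step and it points in the favorable direction for the energy bound.

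With this reformulation, the argument proceeds in three parts as in Theorem \ref{theorem:LinMacMicEnStab}. First, I would bound $\norm{\StefBoltzconst\Temp^{n+1}+\frac{\epsi^{2}}{\sol}\meso^{n+1}}^{2}+\norm{\frac{\epsi}{\pknorm{0}\sol}\micmat^{n+1}}^{2}$ by combining \eqref{eq:DLRAmesotFD} with the projected form of the S-step, taking Frobenius inner products against $\micmat^{n+1}$ and invoking Property \ref{lemma:sumofsquares} to split mixed terms into squares. Second, I would bound $\norm{\sqrt{\StefBoltzconst\specheat/2}\,\Temp^{n+1}}^{2}$ directly from the temperature update \eqref{eq:DLRAmacFD}, which is unchanged from the full-rank setting. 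Third, these two bounds would be combined under the CFL condition \eqref{eq:CFLCond} to telescope into $\energy^{n+1}\le\energy^{n}$.

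The main obstacle is verifying that the upwind dissipation produced by $\fluxMatpm=\tfrac{1}{2}\Tmat(\quadptMat\pm\absquadMat)\Tmat^{\top}$ survives the Galerkin projection $\Vmat^{n+1,\top}(\cdot)\Vmat^{n+1}$. The key is that, because $\Vmat^{n+1}$ has orthonormal columns, the diagonalization argument of \cite{einkemmer2022asymptoticpreserving} used in the proof of Theorem \ref{theorem:LinMacMicEnStab} can be replayed with the rectangular matrix $\Tmat^{\top}\Vmat^{n+1}$ playing the role of $\Tmat^{\top}$; the associated quadratic form still inherits the non-negative contribution from $\absquadMat$ whose magnitude is controlled by $\beta_{N}=\max_{k}\weight_{k}(N+1)$, which is precisely the quantity entering \eqref{eq:CFLCond}. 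A secondary technical point is the treatment of the mixed source term $\sum_{i}\Xvec^{n+1}_{i+1/2}\deltaO(\StefBoltzconst\sol\Temp^{n}_{i+1/2}+\epsi^{2}\meso^{n}_{i+1/2})\bvec^{\top}\Vmat^{n+1}$ in \eqref{eq:FDSstep}, but Cauchy--Schwarz in the Galerkin subspaces, together with the coupling to \eqref{eq:DLRAmacFD} through $\facalph\absorpcoeff_{i}\meso^{n+1}_{i}$ used in the full-rank proof, handles it without introducing any new step-size restriction beyond \eqref{eq:CFLCond}.
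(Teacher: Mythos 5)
Your strategy coincides with the paper's: project the S-step onto the updated bases, recover an update for $\micmat^{n+1}=\Xmat^{n+1}\Smat^{n+1}\Vmat^{n+1,\top}$ that mirrors \eqref{eq:LinMacMicsysmic} with the Galerkin initial datum $\widetilde{\micmat}^{n}=\Xmat^{n+1}\Smatinit^{n}\Vmat^{n+1,\top}$ in place of $\micmat^{n}$, rerun the three-part argument of \Cref{theorem:LinMacMicEnStab}, and close using $\norm{\widetilde{\micmat}^{n}}\le\norm{\micmat^{n}}$ together with the unchanged temperature update. The one place you diverge is the advection term, and there the ``main obstacle'' you identify dissolves on closer inspection: because $\micmat^{n+1}$ lies in the range of the updated spatial and angular bases, testing the projected S-step against $\micmat^{n+1}$ makes the projectors $\ProjX$ and $\ProjV$ act as identities (this is precisely \eqref{eq:ESDLRAthm12}), so the advection contribution collapses to $\sum_{j}\micvec^{n+1,\top}_{j+1/2}\AdvecOp\widetilde{\micvec}^{n}_{j+1/2}$ with the \emph{unprojected} operator acting on full $N$-dimensional moment vectors; \Cref{lemma:Positivity}, \Cref{lemma:boundedness} and \Cref{lemma:PnPreservation} then apply verbatim and the same constant $\beta_{N}$ and CFL condition emerge with no further work. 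Your proposed alternative --- replaying the diagonalization with $\Tmat^{\top}\Vmat^{n+1}$ in place of $\Tmat^{\top}$ --- is therefore unnecessary, and it is also more delicate than you suggest: the boundedness estimate of \Cref{lemma:boundedness} is phrased in terms of $\fluxMat^{2}$, and since $\left(\Vmat^{n+1,\top}\fluxMat\Vmat^{n+1}\right)^{2}\neq\Vmat^{n+1,\top}\fluxMat^{2}\Vmat^{n+1}$ you would have to re-establish that lemma for the compressed operators and verify that the constant entering \eqref{eq:CFLCond} is unchanged. Apart from this detour, the proposal is sound and matches the paper's proof.
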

\begin{proof}
   Since the proof of the theorem follows along the lines of \Cref{theorem:LinMacMicEnStab}, to shorten the presentation, we only present the parts of the proof that differ from \Cref{theorem:LinMacMicEnStab}. That is, we show that the inequalities \eqref{eq:Thm1Eq3} and \eqref{eq:Thm1Eq2} hold for the low-rank scheme. We begin by rewriting the S-step \eqref{eq:FDSstep} of the fixed-rank modal macro-micro BUG scheme as 
    \begin{equation*}\label{eq:ESDLRAthm3}
        \begin{aligned}
            \frac{\epsi^{2}}{\sol\deltat}\Smat^{n+1} &=   \frac{\epsi^{2}}{\sol\deltat}\Smatinit^{n} -\epsi\AdvecOpS\Smatinit^{n} - \sum_{j}\Xvec^{n+1}_{j+1/2}\deltaO(\StefBoltzconst\sol\Temp^{n}_{j+1/2} + \epsi^{2}\meso^{n}_{j+1/2})\bvec^{\top}\Vmat^{n+1} \\ & \qquad - \sum_{j}\absorpcoeff_{j+1/2}\Xvec^{n+1}_{j+1/2}\Xvec^{n+1,\top}_{j+1/2}\Smat^{n+1}
        \end{aligned}
    \end{equation*}
    and multiply $\Xvec^{n+1,\top}_{i+1/2}$ and $\Vmat^{n+1,\top}$ from the left and the right, respectively. If we define $\widetilde{\micvec}^{n}_{i+1/2} = \Xvec^{n+1,\top}_{i+1/2}\Smatinit^{n}\Vmat^{n+1,\top} $ and $\micvec^{n+1}_{i+1/2} = \Xvec^{n+1,\top}_{i+1/2}\Smat^{n+1}\Vmat^{n+1,\top}$, we obtain
    \begin{equation}\label{eq:ESDLRAthm4}
        \begin{aligned}
            \frac{\epsi^{2}}{\sol\deltat}\micvec^{n+1}_{i+1/2} &= \frac{\epsi^{2}}{\sol\deltat}\widetilde{\micvec}^{n}_{i+1/2} -\epsi\Xvec^{n+1,\top}_{i+1/2}\AdvecOpS\Smatinit^{n}\Vmat^{n+1,\top} \\ &\quad- \sum_{j}\Xvec^{n+1,\top}_{i+1/2}\Xvec^{n+1}_{j+1/2}\deltaO(\StefBoltzconst\sol\Temp^{n}_{j+1/2} + \epsi^{2}\meso^{n}_{j+1/2})\bvec^{\top}\Vmat^{n+1}\Vmat^{n+1,\top}  \\&\quad - \sum_{j}\absorpcoeff_{j+1/2}\Xvec^{n+1,\top}_{i+1/2}\Xvec^{n+1}_{j+1/2}\micvec^{n+1}_{j+1/2}\Vmat^{n+1}\Vmat^{n+1,\top}.
        \end{aligned}
    \end{equation}
    Defining the projection matrix onto the spatial basis as $\textbf{P}^{X}\in\R^{\\Nx\times\Nx}$ with entries $$\ProjX_{ij} = \Xvec^{n+1,\top}_{i+1/2}\Xvec^{n+1}_{j+1/2} = \sum_{q}\Xc^{n+1}_{i+1/2,q}\Xc^{n+1}_{j+1/2,q} $$
    and the projection matrix onto the angular basis as $\textbf{P}^{V} = \Vmat^{n+1}\Vmat^{n+1,\top}\in\R^{N\times N}$, \eqref{eq:ESDLRAthm4} reads
    \begin{equation*}\label{eq:ESDLRAthm5}
        \begin{aligned}
            \frac{\epsi^{2}}{\sol\deltat}\micvec^{n+1}_{i+1/2} &= \frac{\epsi^{2}}{\sol\deltat}\widetilde{\micvec}^{n}_{i+1/2} -\epsi\Xvec^{n+1,\top}_{i+1/2}\AdvecOpS\Smatinit^{n}\Vmat^{n+1,\top} \\ &\quad- \sum_{j}\ProjX_{ij}\deltaO(\StefBoltzconst\sol\Temp^{n}_{j+1/2} + \epsi^{2}\meso^{n}_{j+1/2})\bvec^{\top}\textbf{P}^{V}   - \sum_{j}\absorpcoeff_{j+1/2}\ProjX_{ij}\micvec^{n+1}_{j+1/2}\textbf{P}^{V}.
        \end{aligned}
    \end{equation*}
   Thus, if $1\leq k\leq N$, the evolution equation for the $k^{\text{th}}$ moment, $\mic^{n+1}_{i+1/2,k} $, is given by
    \begin{equation}\label{eq:ESDLRAthm6}
        \begin{aligned}
            \frac{\epsi^{2}}{\sol\deltat}\mic^{n+1}_{i+1/2,k} &= \frac{\epsi^{2}}{\sol\deltat}\widetilde{\mic}^{n}_{i+1/2,k} -\epsi\Xvec^{n+1,\top}_{i+1/2}\AdvecOpS\Smatinit^{n}\Vmat^{n+1,\top}\unitvec{k} \\ &\quad- \sum_{j}\ProjX_{ij}\deltaO(\StefBoltzconst\sol\Temp^{n}_{j+1/2} + \epsi^{2}\meso^{n}_{j+1/2})\bvec^{\top}\textbf{P}^{V}\unitvec{k}   - \sum_{j}\absorpcoeff_{j+1/2}\ProjX_{ij}\micvec^{n+1}_{j+1/2}\textbf{P}^{V}\unitvec{k},
        \end{aligned}
    \end{equation}
    where $\unitvec{k} = (\delta_{ik})_{i=1,\ldots,N}$. 
    First, we consider the second term on the right-hand side of \eqref{eq:ESDLRAthm6} and split it into two sub-equations
    \begin{align*}
        \Xvec^{n+1,\top}_{i+1/2}\AdvecOpS\Smatinit^{n}\Vmat^{n+1,\top}\unitvec{k} & = \Xvec^{n+1,\top}_{i+1/2}\left[\sum_{j}\Xvec^{n+1}_{j+1/2}\Diffm\Xvec^{n+1,\top}_{j+1/2}\Smatinit^{n}\Vmat^{n+1,\top}\fluxMatp\Vmat^{n+1}  \right.\\ 
        & \qquad \left.+ \sum_{j}\Xvec^{n+1}_{j+1/2}\Diffp\Xvec^{n+1,\top}_{j+1/2}\Smatinit^{n}\Vmat^{n+1,\top}\fluxMatm\Vmat^{n+1} \right]\Vmat^{n+1,\top}\unitvec{k} \\ 
        &= \sum_{j,\ell,q}\ProjX_{ij}\Diffm\widetilde{\mic}^{n}_{j+1/2,\ell}A^{+}_{\ell q}\ProjV_{qk} +\sum_{j,\ell,q}\ProjX_{ij}\Diffp\widetilde{\mic}^{n}_{j+1/2,\ell}A^{-}_{\ell q}\ProjV_{qk}.
    \end{align*}

    \noindent Similarly, expanding the third and the fourth term on the right-hand side  of \eqref{eq:ESDLRAthm6} we get 
    \begin{equation}\label{eq:ESDLRAthm10}
        \begin{aligned}
            \frac{\epsi^{2}}{\sol\deltat}\mic^{n+1}_{i+1/2,k} &= \frac{\epsi^{2}}{\sol\deltat}\widetilde{\mic}^{n}_{i+1/2,k} - \epsi\sum_{j,\ell,q}\ProjX_{ij}\Diffm\widetilde{\mic}^{n}_{j+1/2,\ell}A^{+}_{\ell q}\ProjV_{qk} - \epsi\sum_{j,\ell,q}\ProjX_{ij}\Diffp\widetilde{\mic}^{n}_{j+1/2,\ell}A^{-}_{\ell q}\ProjV_{qk}\\
            & \qquad -\sum_{j,q}\ProjX_{ij}\deltaO(\StefBoltzconst\sol\Temp^{n}_{j+1/2} + \epsi^{2}\meso^{n}_{j+1/2})b_{q}\ProjV_{qk} - \sum_{j,q}\ProjX_{ij}\absorpcoeff_{j+1/2}\mic^{n+1}_{j+1/2,q}\ProjV_{qk}.
        \end{aligned}
    \end{equation}
   Multiplying \eqref{eq:ESDLRAthm10} by $\mic^{n+1}_{i+1/2,k}\deltax$ and summing over $i$, $k$ we get
       \begin{equation}\label{eq:ESDLRAthm11}
        \begin{aligned}
            \frac{\epsi^{2}}{\sol\deltat}\sum_{i,k}(\mic^{n+1}_{i+1/2,k})^{2}\deltax &= \frac{\epsi^{2}}{\sol\deltat}\sum_{i,k}\widetilde{\mic}^{n}_{i+1/2,k}\mic^{n+1}_{i+1/2,k}\deltax - \epsi\sum_{j,\ell,q}\Diffm\widetilde{\mic}^{n}_{j+1/2,\ell}A^{+}_{\ell q}\sum_{i,k}\ProjV_{qk}\ProjX_{ij}\mic^{n+1}_{i+1/2,k}\deltax\\
            &\qquad- \epsi\sum_{j,\ell,q}\Diffp\widetilde{\mic}^{n}_{j+1/2,\ell}A^{-}_{\ell q}\sum_{i,k}\ProjV_{qk}\ProjX_{ij}\mic^{n+1}_{i+1/2,k}\deltax\\
            & \qquad -\sum_{j,q}\deltaO(\StefBoltzconst\sol\Temp^{n}_{j+1/2} + \epsi^{2}\meso^{n}_{j+1/2})b_{q}\sum_{i,k}\ProjV_{qk}\ProjX_{ij}\mic^{n+1}_{i+1/2,k}\deltax \\
            &\qquad- \sum_{j,q}\absorpcoeff_{j+1/2}\mic^{n+1}_{j+1/2,q}\sum_{i,k}\ProjV_{qk}\ProjX_{ij}\mic^{n+1}_{i+1/2,k}\deltax.
        \end{aligned}
    \end{equation}
    Using 
    \begin{equation}\label{eq:ESDLRAthm12}
        \sum_{i}\ProjX_{ij}\mic^{n+1}_{i+1/2,k} = \mic^{n+1}_{j+1/2,k}, \qquad \sum_{k}\ProjV_{qk}\mic^{n+1}_{j+1/2,k} = \mic^{n+1}_{j+1/2,q}.
    \end{equation}
    and \Cref{lemma:sumofsquares}, \eqref{eq:ESDLRAthm11} reduces to
   \begin{equation*}\label{eq:ESDLRAthm13}
        \begin{aligned}
            \frac{\epsi^{2}}{2\sol\deltat}\left(\norm{\micmat^{n+1}}^{2} - \norm{\widetilde{\micmat}^{n}}^{2} + \norm{\micmat^{n+1} -\widetilde{\micmat}^{n} }^{2} \right) &= - \epsi\sum_{j,\ell,q}\Diffm\widetilde{\mic}^{n}_{j+1/2,\ell}A^{+}_{\ell q}\mic^{n+1}_{j+1/2,q}\deltax\\
            &\qquad- \epsi\sum_{j,\ell,q}\Diffp\widetilde{\mic}^{n}_{j+1/2,\ell}A^{-}_{\ell q}\mic^{n+1}_{j+1/2,q}\deltax\\
            & \qquad -\sum_{j,q}\deltaO(\StefBoltzconst\sol\Temp^{n}_{j+1/2} + \epsi^{2}\meso^{n}_{j+1/2})b_{q}\mic^{n+1}_{j+1/2,q}\deltax \\
            &\qquad- \sum_{j,q}\absorpcoeff_{j+1/2}\widetilde{\mic}^{n+1}_{j+1/2,q}\mic^{n+1}_{j+1/2,q}\deltax.
        \end{aligned}
    \end{equation*}
   Collecting into a vector, we get
    \begin{equation*}\label{eq:ESDLRAthm14}
        \begin{aligned}
            \frac{\epsi^{2}}{2\sol\deltat}\left(\norm{\micmat^{n+1}}^{2} - \norm{\widetilde{\micmat}^{n}}^{2} + \norm{\micmat^{n+1} -\widetilde{\micmat}^{n} }^{2} \right) &= - \epsi\sum_{j}\left(\AdvecOp^{\top}\widetilde{\micvec}^{n}_{j+1/2}\right)\micvec^{n+1,\top}_{j+1/2}\deltax\\
            & \qquad -\sum_{j}\bvec^{\top}\deltaO(\StefBoltzconst\sol\Temp^{n}_{j+1/2} + \epsi^{2}\meso^{n}_{j+1/2})\micvec^{n+1,\top}_{j+1/2}\deltax \\
            &\qquad- \sum_{j}\absorpcoeff_{j+1/2}\micvec^{n+1}_{j+1/2}\micvec^{n+1,\top}_{j+1/2}\deltax.
        \end{aligned}
    \end{equation*}
    The above equation is equivalent to \eqref{eq:Thm1Eq3}. Similarly, substituting the temperature update \eqref{eq:DLRAmacFD} in \eqref{eq:DLRAmesotFD} we get
    \begin{equation}\label{eq:ESDLRAthm15}
        \left( \frac{\StefBoltzconst\Temp^{n+1}_{i} + \frac{\epsi^{2}}{\sol}\meso^{n+1}_{i} - \StefBoltzconst\Temp^{n}_{i} -\frac{\epsi^{2}}{\sol}\meso^{n+1}_{i}}{\deltat}  \right) + \frac{\pknorm{1}}{2}\DiffO\Xvec^{n+1,\top}_{i}\Smat^{n+1}\Vmat^{n+1,\top}\unitvecN = -\absorpcoeff_{i}\meso^{n+1}_{i}.
    \end{equation}
    Multiplying \eqref{eq:ESDLRAthm15} by $\StefBoltzconst\Temp^{n+1}_{i} + \frac{\epsi^{2}}{\sol}\meso^{n+1}_{i}$, summing over $i$ and using \Cref{lemma:sumofsquares} yields
    \begin{equation}\label{eq:ESDLRAthm16}
    \begin{aligned}
         \frac{1}{2\deltat}\left( \norm{\StefBoltzconst\Temp^{n+1} + \frac{\epsi^{2}}{\sol}\meso^{n+1}} - \norm{\StefBoltzconst\Temp^{n} +\frac{\epsi^{2}}{\sol}\meso^{n+1}}\right.&
         \left. + \norm{\StefBoltzconst\Temp^{n+1} + \frac{\epsi^{2}}{\sol}\meso^{n+1} -\StefBoltzconst\Temp^{n} -\frac{\epsi^{2}}{\sol}\meso^{n+1}}  \right) \\
         +  \frac{\pknorm{1}}{2}\sum_{i}(\StefBoltzconst\Temp^{n+1}_{i} + \frac{\epsi^{2}}{\sol}\meso^{n+1}_{i})\DiffO\mic^{n+1}_{i,1}& = -\sum_{i}\absorpcoeff_{i}(\StefBoltzconst\Temp^{n+1}_{i} + \frac{\epsi^{2}}{\sol}\meso^{n+1}_{i})\meso^{n+1}_{i}
    \end{aligned}
    \end{equation}
    which is the same as \eqref{eq:Thm1Eq2}. The rest of the proof follows along the lines of \Cref{theorem:LinMacMicEnStab} (see \Cref{appendix:ProofLinMMMstab}).

\end{proof}

\subsubsection{Local mass conservation}
\begin{theorem}
    The fixed-rank modal macro-micro BUG scheme is locally conservative. I.e., if the scalar flux at time $t_n$ is denoted by $\Phi_i^n = acT_i^n + \varepsilon^2 h_i^n$, where $n\in\{0,1\}$ and $g_{i+1/2,k}^{n+1} = \sum_{\ell,m}X_{i+1/2,\ell}^{n+1}S_{\ell m}^{n+1} V_{km}^{n+1}$ the scheme fulfills the discrete conservation law
\begin{subequations}
    \begin{equation}\label{eq:ConsPhi}
        \frac{\Phi^{n+1}_{i} - \Phi^{n}_{i}}{\deltat} + \sol\frac{\pknorm{1}}{2}\DiffO\mick{1,i}^{n+1} = -\sol\absorpcoeff_{i}\meso^{n+1}_{i} ,
    \end{equation}
    \begin{equation}
        \frac{\specheat}{2}\left(\frac{\Temp^{n+1}_{i} - \Temp^{n}_{i}}{\deltat}\right) = \absorpcoeff_{i}\meso^{n+1}_{i}\, .
    \end{equation}
\end{subequations}    
\end{theorem}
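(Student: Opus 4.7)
The plan is to obtain both identities as direct linear combinations of the temperature update \eqref{eq:DLRAmacFD} and the mesoscopic update \eqref{eq:DLRAmesotFD}, exploiting the fact that the emission/absorption coupling is treated implicitly in the same discrete value $\meso^{n+1}_i$ in both equations. This mirrors the continuous identity $\pardift B(\Temp) + \varepsilon^2\pardift\meso = -\tfrac{\sol\pknorm{1}}{2}\pardifx g_1 - \sol\absorpcoeff\meso$; in the linear emission setting implicit in the statement of the theorem (so that $\DPlanckC\equiv 1$ and $\Phi=\StefBoltzconst\sol\Temp+\varepsilon^{2}\meso$ is the scalar flux of the particle density), this telescoping should hold exactly at the discrete level.

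The second identity follows immediately: multiplying \eqref{eq:DLRAmacFD} by $\specheat/2$ and using $\facalph=2/\specheat$ yields $\tfrac{\specheat}{2}\,(\Temp^{n+1}_i-\Temp^n_i)/\deltat=\absorpcoeff_i\meso^{n+1}_i$.

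For the first identity, I would multiply \eqref{eq:DLRAmacFD} by $\StefBoltzconst\sol$ and multiply \eqref{eq:DLRAmesotFD} (with $\DPlanckC^{n}_i=1$) by $\sol$, obtaining
\[ \StefBoltzconst\sol\,\frac{\Temp^{n+1}_i-\Temp^{n}_i}{\deltat} \;=\; \StefBoltzconst\sol\facalph\absorpcoeff_i\meso^{n+1}_i \]
and
\[ \varepsilon^{2}\frac{\meso^{n+1}_i-\meso^{n}_i}{\deltat} + \StefBoltzconst\sol\facalph\absorpcoeff_i\meso^{n+1}_i + \frac{\sol\pknorm{1}}{2}\DiffO\,\mick{1,i}^{n+1} \;=\; -\sol\absorpcoeff_i\meso^{n+1}_i, \]
where I have used $\Xvec^{n+1,\top}_{i+1/2}\Smat^{n+1}\Vmat^{n+1,\top}\unitvecN=\mick{1,i+1/2}^{n+1}$, the first moment of the low-rank reconstruction at the cell interface. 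Adding the two scaled equations, the coupling term $\StefBoltzconst\sol\facalph\absorpcoeff_i\meso^{n+1}_i$ cancels between the two sides, and grouping time differences as $(\Phi^{n+1}_i-\Phi^{n}_i)/\deltat$ yields exactly the claimed conservation law \eqref{eq:ConsPhi}.

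There is no substantive technical obstacle, but the result deserves interpretation. The cancellation is exact, rather than $\bigO{\deltat}$, precisely because both updates treat the absorption term with the same implicit value $\meso^{n+1}_i$; a different semi-implicit splitting would leave a residual and destroy local conservation. The low-rank factorization plays no adversarial role here because $\mick{1,i+1/2}^{n+1}$ is a genuine scalar at the interface independent of the particular choice of factors $\Xvec^{n+1},\Smat^{n+1},\Vmat^{n+1}$, so $\DiffO$ produces a bona fide conservative flux at the full grid point $\varx_i$. Finally, the identity as stated is tailored to the linear emission closure: under the genuinely nonlinear Stefan--Boltzmann law, one would only obtain conservation of the Taylor-linearized surrogate built from $\StefBoltzconst\sol\DPlanckC^{n}_i(\Temp^{n+1}_i-\Temp^{n}_i)$, since this quantity no longer coincides with $\StefBoltzconst\sol((\Temp^{n+1}_i)^{4}-(\Temp^{n}_i)^{4})$.
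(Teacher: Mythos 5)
Your proof is correct and follows exactly the route the paper takes: multiply the temperature update \eqref{eq:DLRAmacFD} by $\StefBoltzconst\sol$ (resp.\ $\specheat/2$) and add it to the mesoscopic update \eqref{eq:DLRAmesotFD} for the linearized problem, so that the emission term $\StefBoltzconst\sol\facalph\absorpcoeff_{i}\meso^{n+1}_{i}$ cancels and the time differences telescope into $(\Phi^{n+1}_{i}-\Phi^{n}_{i})/\deltat$. Your additional remarks on why the implicit treatment of $\meso^{n+1}_{i}$ makes the cancellation exact, and on the role of the linear closure, are consistent with the paper's (much terser) argument.
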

\begin{proof}
    Since, for zero or periodic boundary conditions, $\sum_i \sol\frac{\pknorm{1}}{2}\DiffO\mick{1,i}^{n+1} = 0$, this means that the total mass $\sum_{i}\left(\frac{1}{\sol}\Phi_{i}^{n} + \frac{\specheat}{2}\Temp^{n}_{i} \right)$ is conserved over all time steps $n$. This result is a direct consequence of the macro-micro strategy as shown in \cite{koellermeier2023macromicro} and follows from multiplying \eqref{eq:DLRAmacFD} with $\StefBoltzconst\sol$ and adding \eqref{eq:DLRAmesotFD} for the linearized problem.
\end{proof}

\subsection{Asymptotic-preserving modification to augmented BUG integrator}\label{section:raBUGforMMM}
We see from \Cref{theorem:APBUG} that the updated spatial and angular basis span $\frac{\DPlanckC^{n}_{i+1/2}}{\absorpcoeff_{i+1/2}}\deltaO(\StefBoltzconst\sol\Temp^{n}_{i+1/2})  $ and $\bvec$, respectively. Unlike the fixed-rank BUG integrator \cite{doi:10.1007/s10543-021-00873-0}, naively using the augmented BUG integrator \cite{robustBUG} does not guarantee that this property is fulfilled since the truncation step may prune away essential basis vectors. Thus, we propose the following modification to the augmented BUG integrator, based on its basis-augmentation step and conservative truncation \cite{einkemmer2023conservation}, to obtain an asymptotic-preserving scheme. To ease the presentation of the integrator, we consider the spatially and angularly discretized problem from \Cref{section:BUGforMMM} so that $\micmat \in\R^{\Nx\times N}$. Thus, the low-rank ansatz takes the form 
\begin{equation*}
    \micmat(\vart) = \Xmat(\vart)\Smat(\vart)\Vmat(\vart)^{\top}.    
\end{equation*}
Then, one step of the modal macro-micro BUG scheme updates $\Xmat^{n},\Vmat^{n},\Smat^{n},\meso^{n},\Temp^{n} $ from time $\vart_{n}$ to $\vart_{n+1}$ by the following steps
\begin{enumerate}
    \item Spatial and angular basis update\\

    \indent \textbf{$\Kmat$-step}: Update $\Kmat(\vart_{n+1})\in\R^{\Nx\times\rank} $ according to the K-step \eqref{eq:FDKstep} of the fixed-rank modal macro-micro BUG scheme. Then compute $\hXmat^{n+1}$ as an orthonormal basis of \\$\begin{bmatrix}
        (\absorpcoeffb)^{-1}\DPlanckC^{n}\bdeltaO(\StefBoltzconst\sol\bTemp^{n}) & \Kmat(\vart_{n+1}) & \Xmat^{n}
    \end{bmatrix}$ and store $\hMbug = \hXmat^{n+1,\top}\Xmat^{n} \in \R^{(2\rank+1)\times\rank} $.

    \indent \textbf{$\Lmat$-step}: Update $\Lmat(\vart_{n+1})\in\R^{N\times \rank}$ according to the L-step \eqref{eq:FDLstep} of the fixed-rank modal macro-micro BUG scheme. Then compute $\hVmat^{n+1}$ as an orthonormal basis of $\begin{bmatrix}
        \bvec & \Lmat(\vart_{n+1}) & \Vmat^{n}
    \end{bmatrix}$ and store $\hNbug = \hVmat^{n+1,\top}\Vmat^{n}\in \R^{(2\rank+1)\times\rank} $.

    \item Perform a Galerkin update of the coefficient matrix $\hSmat^{n+1}$ similar to \eqref{eq:raBUGSstep} with the initial condition $\Smatinit^{n} = \hMbug^{\top}\Smat^{n}\hNbug^{\top} $ and the right-hand side as described in \eqref{eq:FDSstep} of the fixed-rank modal macro-micro BUG scheme.

    \item Asymptotic-preserving splitting of the basis matrices and truncation \\

    Set $\widehat{\Kmat} = \hXmat^{n+1}\hSmat^{n+1}$ and split $\widehat{\Kmat} = \begin{bmatrix}
        \widehat{\Kmat}^{\text{ap}} & \widehat{\Kmat}^{\text{rem}}
    \end{bmatrix}$ into basis vectors, where \\$\widehat{\Kmat}^{\text{ap}} = \begin{bmatrix}
        (\absorpcoeffb)^{-1}\DPlanckC^{n}\bdeltaO(\StefBoltzconst\sol\bTemp^{n})
    \end{bmatrix}\in\R^{\Nx\times 1} $ and the remaining basis vectors into $\widehat{\Kmat}^{\text{rem}}\in\R^{\Nx\times 2\rank} $. Similarly, split the angular basis $\hVmat = \begin{bmatrix}
        \hVmat^{\text{ap}} & \hVmat^{\text{rem}}
    \end{bmatrix}$, where $\hVmat^{\text{ap}} = \begin{bmatrix}
        \bvec        
    \end{bmatrix} \in\R^{N\times 1} $ and $\hVmat^{\text{rem}}\in\R^{N\times 2\rank} $.

    Next compute the QR decomposition of $\widehat{\Kmat}^{\text{rem}} $
    \begin{equation*}\label{eq:raBUGeq1}
        \widehat{\Kmat}^{\text{rem}} = \widehat{\Xmat}^{\text{rem}}\widehat{\Smat}^{\text{rem}}.
    \end{equation*}
    and for truncating the rank compute the SVD of $ \widehat{\Smat}^{\text{rem}} $ as
    \begin{equation}\label{eq:raBUGeq2}
        \widehat{\Smat}^{\text{rem}} = \textbf{U}\BSigma\textbf{W}^{\top}.
    \end{equation}
    We truncate the remaining basis vectors to  $1\leq\rank^{*}\leq 2\rank$ such that if $\hat{\sigma}_{i}$, $ i = 1,\ldots,2\rank$, are the singular values of $\widehat{\Smat}^{\text{rem}}$ then for some user defined tolerance $\vartheta$ the following is satisfied:
    \begin{equation*}
        \left(\sum_{i=\rank^{*} + 1}^{2\rank}\hat{\sigma}_{i}\right)^{1/2} \leq \vartheta.
    \end{equation*}
    The new rank is set as $\Nrank = \rank^{*} + 1 $. Then let $ \widehat{\textbf{U}}\in\R^{2\rank\times\rank^{*}} $ and $ \widehat{\textbf{W}}\in\R^{2\rank\times\rank^{*}} $ be the matrices containing the first $\rank^{*}$ columns of $\textbf{U}$ and $\textbf{W}$, respectively. Similarly, let $\widehat{\BSigma} $ be the first $\rank^{*}\times\rank^{*}$ block of $\BSigma$; then we set 
    \begin{equation*}\label{eq:raBUGeq3}
        \Xmat^{\text{rem}} =  \widehat{\Xmat}^{\text{rem}}\widehat{\textbf{U}}, \qquad \Smat^{\text{rem}} = \widehat{\BSigma}, \qquad \textbf{W}^{n+1} = \hVmat^{\text{rem}}\widehat{\textbf{W}}.
    \end{equation*}

    Then we get the updated angular basis $\Vmat^{n+1}\in\R^{N\times\Nrank} $ by adding columns, i.e. \begin{equation*}
        \Vmat^{n+1} = \begin{bmatrix}
        \hVmat^{\text{ap}} & \textbf{W}^{n+1}
    \end{bmatrix}.
    \end{equation*}\label{eq:raBUGeq4}
    For the updated spatial basis, we first compute the QR decomposition of $\widehat{\Kmat}^{\text{ap}} $ as 
    \begin{equation*}\label{eq:raBUGeq5}
        \widehat{\Kmat}^{\text{ap}} = \Xmat^{\text{ap}}\Smat^{\text{ap}}.
    \end{equation*}
    Then set $\widehat{\Xmat} = \begin{bmatrix}
        \Xmat^{\text{ap}} &  \Xmat^{\text{rem}}
    \end{bmatrix}$ and subsequently perform a QR decomposition to obtain the updated spatial basis matrix $\Xmat^{n+1}\in\R^{\Nx\times\Nrank}$,
    \begin{equation}\label{eq:raBUGeq6}
        \Xmat^{n+1}\textbf{R}_{2} = \widehat{\Xmat}.
    \end{equation}

    Finally we set the updated coefficient matrix $\Smat^{n+1}\in\R^{\Nrank\times\Nrank}$ to be
    \begin{equation}\label{eq:raBUGeq7}
        \Smat^{n+1} = \textbf{R}_{2}\begin{bmatrix}
            \Smat^{\text{ap}} & \textbf{0}\\
            \textbf{0} & \Smat^{\text{rem}}
        \end{bmatrix}
    \end{equation}
    and the approximation at the next time step is set as $\micmat^{n+1} = \Xmat^{n+1}\Smat^{n+1}\Vmat^{n+1,\top} $.
    \item Update $\Temp,\meso$:
    \begin{equation}\label{eq:raBUGeq8}
        \frac{\epsi^{2}}{\sol}\left( \frac{\meso^{n+1}_{i} - \meso^{n}_{i}}{\deltat} \right) + \StefBoltzconst\facalph\hspace{.5mm}\absorpcoeff_{i}\DPlanckC^{n}_{i}\meso^{n+1}_{i} + \frac{\pknorm{1}}{2}\DiffO\Xvec^{n+1,\top}_{i+1/2}\Smat^{n+1}\Vmat^{n+1,\top}\unitvecN = -\absorpcoeff_{i}\meso^{n+1}_{i},
    \end{equation}
    \begin{equation}\label{eq:aBUGeq9}
        \frac{\Temp^{n+1}_{i} - \Temp^{n}_{i}}{\deltat} = \facalph\hspace{.5mm}\absorpcoeff_{i}\meso^{n+1}_{i}.
    \end{equation}
\end{enumerate}

\begin{lemma}\label{lemma:APraBUGAux1}
    For the proposed modal macro-micro BUG scheme, we have 
    \begin{equation*}
        \textbf{R}_{2}^{-\top}\begin{bmatrix}
           (\Smat^{\text{ap}})^{-\top} & \textbf{0}\\
           \textbf{0} & \widehat{\textbf{U}}^{\top}(\widehat{\Smat}^{\text{rem}})^{-\top}\
       \end{bmatrix}\hSmat^{n+1,\top}\hSmat^{n+1}\begin{bmatrix}
           \textbf{I}_{m} & \textbf{0}\\
           \textbf{0} & \widehat{\textbf{W}}
       \end{bmatrix} = \Smat^{n+1},
    \end{equation*}
    where the matrices are as defined above and $\textbf{I}_{m}$ is the $m\times m$ identity matrix.
\end{lemma}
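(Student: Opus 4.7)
The proof is a direct block-matrix bookkeeping argument which threads together the four factorisations used in the truncation stage (the QR decompositions of $\widehat{\Kmat}^{\text{ap}}$, $\widehat{\Kmat}^{\text{rem}}$ and of $\widehat{\Xmat}$, together with the SVD of $\widehat{\Smat}^{\text{rem}}$) and the orthonormality of $\hXmat^{n+1}$ and $\Xmat^{n+1}$. The plan is to work symmetrically inwards on the left- and right-hand factors of $\hSmat^{n+1,\top}\hSmat^{n+1}$ and then to finish with the final QR of $\widehat{\Xmat}$.

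First, since $\widehat{\Kmat}=\hXmat^{n+1}\hSmat^{n+1}$ and $\hXmat^{n+1}$ has orthonormal columns, we have $\hSmat^{n+1,\top}\hSmat^{n+1}=\widehat{\Kmat}^{\top}\widehat{\Kmat}$, and I would split $\widehat{\Kmat}^{\top}$ along the ap/rem partition. The left block-diagonal factor then acts row-wise; using $\widehat{\Kmat}^{\text{ap}}=\Xmat^{\text{ap}}\Smat^{\text{ap}}$, $\widehat{\Kmat}^{\text{rem}}=\widehat{\Xmat}^{\text{rem}}\widehat{\Smat}^{\text{rem}}$ and $\Xmat^{\text{rem}}=\widehat{\Xmat}^{\text{rem}}\widehat{\textbf{U}}$ yields $(\Smat^{\text{ap}})^{-\top}\widehat{\Kmat}^{\text{ap},\top}=(\Xmat^{\text{ap}})^{\top}$ and $\widehat{\textbf{U}}^{\top}(\widehat{\Smat}^{\text{rem}})^{-\top}\widehat{\Kmat}^{\text{rem},\top}=(\Xmat^{\text{rem}})^{\top}$, so that the left factor times $\widehat{\Kmat}^{\top}$ collapses to $\widehat{\Xmat}^{\top}$. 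Symmetrically, for the right-hand factor I would simplify $\widehat{\Kmat}\begin{bmatrix}\textbf{I}_{m} & \textbf{0} \\ \textbf{0} & \widehat{\textbf{W}}\end{bmatrix}$: the ap-block is untouched and equals $\Xmat^{\text{ap}}\Smat^{\text{ap}}$, while the rem-block becomes $\widehat{\Xmat}^{\text{rem}}\widehat{\Smat}^{\text{rem}}\widehat{\textbf{W}}$.

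The key identity here, which I expect to be the main obstacle, is $\widehat{\Smat}^{\text{rem}}\widehat{\textbf{W}}=\widehat{\textbf{U}}\widehat{\BSigma}=\widehat{\textbf{U}}\Smat^{\text{rem}}$. It follows from the SVD $\widehat{\Smat}^{\text{rem}}=\textbf{U}\BSigma\textbf{W}^{\top}$, diagonality of $\BSigma$, and the observation that $\widehat{\textbf{W}}$ consists of the first $\rank^{*}$ columns of the orthogonal matrix $\textbf{W}$, so that $\textbf{W}^{\top}\widehat{\textbf{W}}$ is exactly the first $\rank^{*}$ columns of $\textbf{I}_{2\rank}$. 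With this, the rem-block equals $\Xmat^{\text{rem}}\Smat^{\text{rem}}$ and, combining both blocks, $\widehat{\Kmat}\begin{bmatrix}\textbf{I}_{m} & \textbf{0} \\ \textbf{0} & \widehat{\textbf{W}}\end{bmatrix}=\widehat{\Xmat}\begin{bmatrix}\Smat^{\text{ap}} & \textbf{0} \\ \textbf{0} & \Smat^{\text{rem}}\end{bmatrix}$. The whole expression has then been reduced to $\textbf{R}_{2}^{-\top}\widehat{\Xmat}^{\top}\widehat{\Xmat}\begin{bmatrix}\Smat^{\text{ap}} & \textbf{0} \\ \textbf{0} & \Smat^{\text{rem}}\end{bmatrix}$. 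Finally, using $\widehat{\Xmat}=\Xmat^{n+1}\textbf{R}_{2}$ and orthonormality of $\Xmat^{n+1}$ gives $\widehat{\Xmat}^{\top}\widehat{\Xmat}=\textbf{R}_{2}^{\top}\textbf{R}_{2}$, so that the outer $\textbf{R}_{2}^{-\top}$ cancels one $\textbf{R}_{2}^{\top}$, leaving $\textbf{R}_{2}\begin{bmatrix}\Smat^{\text{ap}} & \textbf{0} \\ \textbf{0} & \Smat^{\text{rem}}\end{bmatrix}$, which is exactly $\Smat^{n+1}$ by \eqref{eq:raBUGeq7}.
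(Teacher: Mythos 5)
Your proof is correct and follows essentially the same route as the paper: it relies on the same ingredients (the QR factorizations of $\widehat{\Kmat}^{\text{ap}}$, $\widehat{\Kmat}^{\text{rem}}$ and $\widehat{\Xmat}$, the key SVD identity $\widehat{\Smat}^{\text{rem}}\widehat{\textbf{W}}=\widehat{\textbf{U}}\Smat^{\text{rem}}$, and orthonormality of $\hXmat^{n+1}$ and $\Xmat^{n+1}$) and reaches the same final cancellation $\textbf{R}_{2}^{-\top}\textbf{R}_{2}^{\top}\textbf{R}_{2}=\textbf{R}_{2}$. The only difference is organizational — you collapse the left and right factors symmetrically via $\hSmat^{n+1,\top}\hSmat^{n+1}=\widehat{\Kmat}^{\top}\widehat{\Kmat}$, which is a slightly tidier bookkeeping of the paper's argument rather than a different one.
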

\begin{proof}
    See \Cref{appendix:ProofAPraBUGAux1}.
\end{proof}

\begin{theorem}\label{theorem:APraBUG}
    The proposed modal macro-micro BUG scheme is asymptotic-preserving in the sense of \Cref{theorem:APBUG}.
\end{theorem}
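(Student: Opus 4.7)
The plan is to mirror the proof of \Cref{theorem:APBUG}, which relied on two ingredients: (i) the limiting K- and L-step equations force $(\absorpcoeff_{i+1/2})^{-1}\DPlanckC^{n}_{i+1/2}\deltaO(\StefBoltzconst\sol\Temp^{n}_{i+1/2})$ to lie in the range of the updated spatial basis and $\bvec$ to lie in the range of the updated angular basis; (ii) the limiting S-step then admits a closed-form solution that reproduces the microscopic closure of \Cref{theorem:APLin}. The only thing that could derail this in the augmented setting is the truncation pruning away those two essential directions, so I must first certify that the modified splitting preserves them before repeating the argument.

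First, I would verify the range conditions. By construction $\hXmat^{n+1}$ is an orthonormal basis that includes $(\absorpcoeffb)^{-1}\DPlanckC^{n}\bdeltaO(\StefBoltzconst\sol\bTemp^{n})$ among its columns, so after multiplication by $\hSmat^{n+1}$ this AP direction is contained in $\widehat{\Kmat}$. The splitting isolates it as $\widehat{\Kmat}^{\text{ap}}$ and its QR decomposition yields $\Xmat^{\text{ap}}$; since the final $\Xmat^{n+1}$ is the Q-factor of $[\Xmat^{\text{ap}},\Xmat^{\text{rem}}]$ via \eqref{eq:raBUGeq6}, the AP direction is retained in the column space of $\Xmat^{n+1}$. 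For the angular basis, the construction $\Vmat^{n+1} = [\hVmat^{\text{ap}},\textbf{W}^{n+1}]$ directly places $\bvec$ as its first column, so $\bvec$ lies in the column space of $\Vmat^{n+1}$ without any truncation.

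Second, I would repeat the limiting calculation for the S-step. Passing to $\varepsilon \to 0$ in the Galerkin equation for $\hSmat^{n+1}$ produces an identity of the form
\begin{equation*}
-\left(\sum_{i}\DPlanckC^{n}_{i+1/2}\deltaO(\StefBoltzconst\sol\Temp^{n}_{i+1/2})\hXvec^{n+1}_{i+1/2}\right)(\bvec^{\top}\hVmat^{n+1}) = \left(\sum_{i}\absorpcoeff_{i+1/2}\hXvec^{n+1}_{i+1/2}\hXvec^{n+1,\top}_{i+1/2}\right)\hSmat^{n+1},
\end{equation*}
which is the augmented analogue of \eqref{eq:DLRAlinAP3}. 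Because of the range conditions just established, applying \Cref{lemma:APraBUGAux1} to relate $\Smat^{n+1}$ to $\hSmat^{n+1}$ and then reconstructing $\micmat^{n+1} = \Xmat^{n+1}\Smat^{n+1}\Vmat^{n+1,\top}$, a left and right projection onto the AP components of the basis gives exactly
\begin{equation*}
\micvec^{n+1}_{i+1/2} = -\frac{\DPlanckC^{n}_{i+1/2}}{\absorpcoeff_{i+1/2}}\deltaO(\StefBoltzconst\sol\Temp^{n}_{i+1/2})\bvec^{\top},
\end{equation*}
the same closure obtained in \Cref{theorem:APBUG}. Substituting this expression into the mesoscopic update \eqref{eq:raBUGeq8} and subsequently into the temperature update \eqref{eq:aBUGeq9} then reproduces the discrete limit equation verbatim, as in the last lines of the proof of \Cref{theorem:APLin}.

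The main obstacle is the bookkeeping in the second step: one must confirm that the $\varepsilon^{0}$ information carried by $\hSmat^{n+1}$ survives the block re-orthogonalization \eqref{eq:raBUGeq6} and the block assembly \eqref{eq:raBUGeq7} intact. This is exactly what \Cref{lemma:APraBUGAux1} encodes, expressing $\Smat^{n+1}$ algebraically in terms of $\hSmat^{n+1}$ via the splitting matrices $\widehat{\textbf{U}}$, $\widehat{\textbf{W}}$, $\Smat^{\text{ap}}$ and $\textbf{R}_{2}$. Without that lemma the column-by-column tracking through the QR of $[\Xmat^{\text{ap}},\Xmat^{\text{rem}}]$ would be unpleasant, because $\Xmat^{\text{ap}}$ need not be orthogonal to $\Xmat^{\text{rem}}$; with the lemma in hand the conclusion reduces to the clean left-right projection described above.
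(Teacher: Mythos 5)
Your proposal follows essentially the same route as the paper's proof: establish that the augmentation and the AP-aware splitting/truncation keep $(\absorpcoeffb)^{-1}\DPlanckC^{n}\bdeltaO(\StefBoltzconst\sol\bTemp^{n})$ in $\mathrm{Range}(\Xmat^{n+1})$ and $\bvec$ in $\mathrm{Range}(\Vmat^{n+1})$, pass to the $\epsi\to 0$ limit in the augmented $S$-step, and use \Cref{lemma:APraBUGAux1} to transfer the resulting identity from $\hSmat^{n+1}$ to the truncated factors before concluding as in \Cref{theorem:APBUG}. The argument is correct and matches the paper's in structure and in its reliance on the key lemma.
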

\begin{proof}
   From the K- and L-step of the modal macro-micro BUG scheme we get $(\absorpcoeffb)^{-1}\DPlanckC^{n}\bdeltaO(\StefBoltzconst\sol\bTemp^{n})\in\text{Range }(\hXmat^{n+1}) $ and $\bvec\in\text{Range }(\hVmat^{n+1}) $. Thus, along the lines of \Cref{theorem:APBUG} for $\epsi\to 0$ we get from the $S$-step of the modal macro-micro BUG scheme 
   \begin{equation}\label{eq:APraBUGeq1}
       -\left(\hXmat^{n+1,\top}(\absorpcoeffb)^{-1}\DPlanckC^{n}\bdeltaO(\StefBoltzconst\sol\bTemp^{n})\right)\left(\bvec^{\top}\hVmat^{n+1}\right) = \hSmat^{n+1}.
   \end{equation}
   Now, to show that the proposed scheme is asymptotic-preserving, we need to show two things; first, that $(\absorpcoeffb)^{-1}\DPlanckC^{n}\bdeltaO(\StefBoltzconst\sol\bTemp^{n})\in\text{Range}(\Xmat^{n+1}) $ and $\bvec\in\text{Range}(\Vmat^{n+1}) $. Second, we need to show that the above relation \eqref{eq:APraBUGeq1} also holds for the truncated factor matrices $\Xmat^{n+1},\Vmat^{n+1}$ and $\Smat^{n+1}$. The first property follows directly from the construction of the scheme. For the latter, we can represent $\Xmat^{n+1}$ as 
   \begin{align*}
       \Xmat^{n+1} &= \begin{bmatrix}
           \Xmat^{\text{ap}} & \Xmat^{\text{rem}}
       \end{bmatrix}\textbf{R}_{2}^{-1}\\
       &= \begin{bmatrix}
           \widehat{\Kmat}^{\text{ap}}(\Smat^{\text{ap}})^{-1} & \widehat{\Xmat}^{\text{rem}}\widehat{\textbf{U}}
       \end{bmatrix}\textbf{R}_{2}^{-1}\\
        &= \begin{bmatrix}
           \widehat{\Kmat}^{\text{ap}}(\Smat^{\text{ap}})^{-1} & \widehat{\Kmat}^{\text{rem}}(\widehat{\Smat}^{\text{rem}})^{-1}\widehat{\textbf{U}}
       \end{bmatrix}\textbf{R}_{2}^{-1}\\
       &= \begin{bmatrix}
           \widehat{\Kmat}^{\text{ap}} & \widehat{\Kmat}^{\text{rem}}
       \end{bmatrix}\begin{bmatrix}
           (\Smat^{\text{ap}})^{-1} & \textbf{0}\\
           \textbf{0} & (\widehat{\Smat}^{\text{rem}})^{-1}\widehat{\textbf{U}}
       \end{bmatrix}\textbf{R}_{2}^{-1}\\
       & = \hXmat^{n+1}\hSmat^{n+1}\begin{bmatrix}
           (\Smat^{\text{ap}})^{-1} & \textbf{0}\\
           \textbf{0} & (\widehat{\Smat}^{\text{rem}})^{-1}\widehat{\textbf{U}}
       \end{bmatrix}\textbf{R}_{2}^{-1}.
   \end{align*}
   Thus we get
   \begin{equation*}\label{eq:APraBUGeq2}
       \Xmat^{n+1,\top} = \textbf{R}_{2}^{-\top}\begin{bmatrix}
           (\Smat^{\text{ap}})^{-\top} & \textbf{0}\\
           \textbf{0} & \widehat{\textbf{U}}^{\top}(\widehat{\Smat}^{\text{rem}})^{-\top}
       \end{bmatrix}\hSmat^{n+1,\top}\hXmat^{n+1,\top}
   \end{equation*}
   and similarly, we get the following relation for the updated angular basis
   \begin{align*}
       \Vmat^{n+1} &= \begin{bmatrix}
           \hVmat^{\text{ap}} & \textbf{W}^{n+1}
       \end{bmatrix}\\
        &= \begin{bmatrix}
           \hVmat^{\text{ap}} & \hVmat^{\text{rem}}
       \end{bmatrix}\begin{bmatrix}
           \textbf{I}_{m} & \textbf{0}\\
           \textbf{0} & \widehat{\textbf{W}}
       \end{bmatrix}.
   \end{align*}
   This yields the relation
   \begin{equation*}\label{eq:APraBUGeq3}
       \Vmat^{n+1} = \hVmat^{n+1}\begin{bmatrix}
           \textbf{I}_{m} & \textbf{0}\\
           \textbf{0} & \widehat{\textbf{W}}
       \end{bmatrix},
   \end{equation*}
   where $\textbf{I}_{m}$ is the $m\times m$ identity matrix.
    Now we multiply \eqref{eq:APraBUGeq1} by $\textbf{R}_{2}^{-\top}\begin{bmatrix}
           (\Smat^{\text{ap}})^{-\top} & \textbf{0}\\
           \textbf{0} & \widehat{\textbf{U}}^{\top}(\widehat{\Smat}^{\text{rem}})^{-\top}
       \end{bmatrix}\hSmat^{n+1,\top} $ from the left and by $\begin{bmatrix}
           \textbf{I}_{m} & \textbf{0}\\
           \textbf{0} & \widehat{\textbf{W}}
       \end{bmatrix} $ from the right and using \Cref{lemma:APraBUGAux1} for the right-hand side gives 
    \begin{equation*}\label{eq:APraBUGeq4}
           -\left(\Xmat^{n+1,\top}(\absorpcoeffb)^{-1}\DPlanckC^{n}\bdeltaO(\StefBoltzconst\sol\bTemp^{n})\right)\left(\bvec^{\top}\Vmat^{n+1}\right) = \Smat^{n+1}.
       \end{equation*}
    Thus by multiplying by $\Xmat^{n+1}$ and $\Vmat^{n+1}$ from the left and the right and using the fact that $(\absorpcoeffb)^{-1}\DPlanckC^{n}\bdeltaO(\StefBoltzconst\sol\bTemp^{n})\in\text{Range}(\Xmat^{n+1}) $ and $\bvec\in\text{Range}(\Vmat^{n+1}) $ we can show that the proposed scheme is asymptotic-preserving by following the steps from \Cref{theorem:APBUG}. 
\end{proof}
\subsubsection{Energy stability}
\begin{theorem}\label{theorem:raBUGMacMicEnStab}
    Assume that the CFL condition \eqref{eq:CFLCond} from \Cref{theorem:LinMacMicEnStab} holds. Then the modal macro-micro BUG scheme is energy stable for the linearized problem \eqref{eq:LinMacMicsys}, where the energy is the same as defined in \Cref{theorem:DLRALinMacMicEnStab}. 
\end{theorem}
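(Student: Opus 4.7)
The plan is to mirror the proof of \Cref{theorem:DLRALinMacMicEnStab}, adapting only the steps affected by basis augmentation and the AP-modified truncation. First, I would define the pre-truncation micro variable $\widehat{\micmat}^{n+1} = \hXmat^{n+1}\hSmat^{n+1}\hVmat^{n+1,\top}$ and repeat the manipulations from the S-step analysis in \Cref{theorem:DLRALinMacMicEnStab} with $\hXmat^{n+1}$ and $\hVmat^{n+1}$ in place of $\Xmat^{n+1}$ and $\Vmat^{n+1}$. The resulting reduced equation is the analog of \eqref{eq:Thm1Eq3} controlling $\norm{\widehat{\micmat}^{n+1}}^{2}$. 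The meso/temperature update \eqref{eq:raBUGeq8}--\eqref{eq:aBUGeq9} is structurally identical to \eqref{eq:DLRAmesotFD}--\eqref{eq:DLRAmacFD}, so the derivation yielding \eqref{eq:Thm1Eq2} carries over verbatim, but with the first column of the truncated $\micmat^{n+1}$, namely $\mic^{n+1}_{i,1}$.

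The next step is to bridge the pre- and post-truncation quantities via two properties of the AP-modified truncation. The first property is preservation of the first moment: the first column of $\micmat^{n+1}$ equals that of $\widehat{\micmat}^{n+1}$. This holds because $\bvec = \pknorm{1}\unitvecN$ spans $\hVmat^{\text{ap}}$ and orthonormality of $\hVmat^{n+1}$ forces $\hVmat^{\text{rem},\top}\unitvecN = 0$; the truncation step modifies only the ``rem'' block, leaving the $\unitvecN$-component of $\micmat$ untouched. The second property is energy non-increase under truncation: $\norm{\micmat^{n+1}}^{2} \leq \norm{\widehat{\micmat}^{n+1}}^{2}$. Using orthonormality of the augmented angular basis, both norms decompose as $\norm{\widehat{\Kmat}^{\text{ap}}}_F^{2}\deltax$ plus a ``rem'' contribution, and the SVD-based truncation in \eqref{eq:raBUGeq2} replaces $\sum_{i=1}^{2\rank}\hat{\sigma}_i^{2}$ by $\sum_{i=1}^{\rank^{*}}\hat{\sigma}_i^{2}$, which is no larger.

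Combining the two inequalities then follows the pattern of \Cref{theorem:LinMacMicEnStab}. The energy non-increase lets me replace $\norm{\widehat{\micmat}^{n+1}}^{2}$ by $\norm{\micmat^{n+1}}^{2}$ on the left-hand side of the micro estimate, while the discarded dissipative term $-\sum_{j}\absorpcoeff_{j+1/2}|\widehat{\micvec}^{n+1}_{j+1/2}|^{2}\deltax$ is non-positive and can be dropped. First-moment preservation gives $\bvec^{\top}\widehat{\micvec}^{n+1} = \pknorm{1}\widehat{\mic}^{n+1}_{1} = \pknorm{1}\mic^{n+1}_{1}$, which is exactly what is needed for the cross-terms between the micro estimate and the meso/temperature estimate to cancel after summation by parts, just as in Theorems \ref{theorem:LinMacMicEnStab} and \ref{theorem:DLRALinMacMicEnStab}. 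Applying the CFL condition \eqref{eq:CFLCond} as in the third part of the proof of \Cref{theorem:LinMacMicEnStab} (see \Cref{appendix:ProofLinMMMstab}) then yields $\energy^{n+1} \leq \energy^{n}$.

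The hardest part, in my view, is the preservation-of-first-moment step. Without the AP modification in step~3 of the scheme that pins $\bvec$ into $\hVmat^{\text{ap}}$ and keeps $\hVmat^{\text{ap}}$ intact through truncation, a standard augmented BUG truncation could rotate $\bvec$-directional content into discarded singular directions, breaking the exact cancellation on which the combined estimate depends. The Frobenius non-increase property of SVD truncation, used on its own, would not then be enough to close the energy argument.
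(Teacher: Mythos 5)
Your overall strategy is the one the paper uses: run the S-step estimate in the augmented bases $\hXmat^{n+1},\hVmat^{n+1}$, use that the augmented initial condition reproduces $\micmat^{n}$ exactly, and pass to the truncated solution via $\norm{\Xmat^{n+1}\Smat^{n+1}\Vmat^{n+1,\top}}\leq\norm{\hXmat^{n+1}\hSmat^{n+1}\hVmat^{n+1,\top}}$. Your explicit first-moment-preservation argument --- that the columns of $\hVmat^{\text{rem}}$ are orthogonal to $\unitvec{1}$, that the truncation only recombines the ``rem'' block, and hence that the $\unitvec{1}$-column of $\micmat^{n+1}$ coincides with that of $\widehat{\micmat}^{n+1}$ --- is correct and is in fact needed: the meso update \eqref{eq:raBUGeq8} is written in the \emph{truncated} first moment, while the S-step estimate controls the \emph{augmented} one, so without this identity the cross terms do not cancel under summation by parts. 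The paper's proof leaves this bridge implicit, so spelling it out is a genuine improvement.

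The one step that fails as written is dropping the absorption term $-\sum_{j}\absorpcoeff_{j+1/2}\widehat{\micvec}^{n+1}_{j+1/2}\widehat{\micvec}^{n+1,\top}_{j+1/2}\deltax$. In the template proof this term survives as $-\frac{\absorpcoefflb}{2\sol}\norm{\micmat^{n+1}}^{2}$ through \eqref{eq:Thm1Eq20} into the final bracket, where it supplies the $-\absorpcoefflb/\sol$ contribution; it is precisely what produces the parabolic part $\absorpcoefflb\deltax^{2}/\quadpt_{k}^{2}$ of the CFL condition \eqref{eq:CFLCond}. If you discard it, the bracket is nonpositive only under $\deltat\lesssim \epsi\deltax/(\sol\beta_{N}\abs{\quadpt_{k}})$, which is not implied by \eqref{eq:CFLCond} and degenerates as $\epsi\to 0$, so the theorem as stated would not follow. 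The fix is to close the entire micro estimate for the augmented quantity first --- keep the absorption term so that every factor in the final bracket is measured in $\widehat{\micmat}^{n+1}$, obtaining $\widehat{\energy}^{n+1}\leq\energy^{n}$ with $\widehat{\energy}^{n+1}$ built from $\norm{\widehat{\micmat}^{n+1}}^{2}$ --- and only then invoke the truncation inequality to conclude $\energy^{n+1}\leq\widehat{\energy}^{n+1}\leq\energy^{n}$. With that reordering your argument coincides with the paper's.
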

\begin{proof}
  Similar to \Cref{theorem:DLRALinMacMicEnStab} we start by considering the $S$-step of the modal macro-micro BUG scheme where,
   \begin{equation}\label{eq:ESraBUGthm1}
        \begin{aligned}
            \frac{\epsi^{2}}{\sol\deltat}\hSmat^{n+1} &=   \frac{\epsi^{2}}{\sol\deltat}\Smatinit^{n} -\epsi\AdvecOpS\Smatinit^{n} - \sum_{j}\hXvec^{n+1}_{j+1/2}\deltaO(\StefBoltzconst\sol\Temp^{n}_{j+1/2} + \epsi^{2}\meso^{n}_{j+1/2})\bvec^{\top}\hVmat^{n+1} \\ & \qquad - \sum_{j}\absorpcoeff_{j+1/2}\hXvec^{n+1}_{j+1/2}\hXvec^{n+1,\top}_{j+1/2}\hSmat^{n+1}.
        \end{aligned}
    \end{equation}
    We note that $\sum_{i}\hXvec^{n+1,\top}_{i+1/2}\Smatinit^{n}\hVmat^{n+1,\top} = \sum_{i}\Xvec^{n,\top}_{i+1/2}\Smat^{n}\Vmat^{n,\top} $. Multiplying \eqref{eq:ESraBUGthm1} by $\hXvec^{n+1,\top}_{i+1/2}$ from the left and $\hVmat^{n+1,\top} $ from the right, then summing over $i$ we get
    \begin{equation*}\label{eq:ESraBUGthm2}
        \begin{aligned}
             \frac{\epsi^{2}}{\sol\deltat}\sum_{i}\hXvec^{n+1,\top}_{i+1/2}\hSmat^{n+1}\hVmat^{n+1,\top}\deltax &= \frac{\epsi^{2}}{\sol\deltat}\sum_{i}\Xvec^{n,\top}_{i+1/2}\Smat^{n}\Vmat^{n,\top}\deltax - \epsi\sum_{i}\hXvec^{n+1,\top}_{i+1/2}\AdvecOpS\Smatinit\hVmat^{n+1,\top}\deltax \\&- \sum_{i,j}\hXvec^{n+1,\top}_{i+1/2}\hXvec^{n+1}_{j+1/2}\deltaO(\StefBoltzconst\sol\Temp^{n}_{j+1/2} + \epsi^{2}\meso^{n}_{j+1/2})\bvec^{\top}\hVmat^{n+1}\hVmat^{n+1} \deltax\\
             & \qquad - \sum_{i,j}\absorpcoeff_{j+1/2}\hXvec^{n+1,\top}_{i+1/2}\hXvec^{n+1}_{j+1/2}\hXvec^{n+1,\top}_{j+1/2}\hSmat^{n+1}\hVmat^{n+1} \deltax.
        \end{aligned}
    \end{equation*}
    Since the truncation step of the integrator does not increase the norm of the solution
    \begin{equation*}
        \norm{\Xmat^{n+1}\Smat^{n+1}\Vmat^{n+1,\top}} = \norm{\Smat^{n+1}} \leq \norm{\hSmat^{n+1}} = \norm{\hXmat^{n+1}\hSmat^{n+1}\hVmat^{n+1,\top}}.
    \end{equation*}
    The rest of the proof follows along the lines of \Cref{theorem:DLRALinMacMicEnStab}.
\end{proof}


\section{Numerical results} \label{section:Numericalresults}
The following numerical results can be reproduced with the openly available source code \cite{publication_codes}.
\subsection{Rectangular pulse test case}
We consider gray thermal radiative transfer equations in slab geometry \eqref{eq:RTE} on the spatial domain $D = [-10, 10]$. The initial distribution of the temperature is given by the rectangular pulse 
\begin{equation*}
    \Temp(\vart = 0,\varx) = \frac{100}{\absorpcoeff(\varx)}\cdot\chi_{[-0.5,0.5]}(\varx).
\end{equation*}
The particle density is initially at an equilibrium with the temperature and is given by
\begin{equation*}
    \parden(\vart = 0,\varx,\omg) = \StefBoltzconst\sol\Temp(\vart = 0,\varx).
\end{equation*}
Subsequently, as time progresses, the particles move into all directions $\omg\in[-1,1]$ while undergoing isotropic absorption at the rate $\absorpcoeff(\varx) = 0.5$. In this test case, we assume that no particles are present at the boundary during the entire simulation, and the temperature remains zero at the boundary as well. The initial and boundary conditions for $\mic$ and $\meso$ can be derived from those for temperature and particle density by using the relations \eqref{eq:ICBC}. We assume that constants are scaled to $1$, i.e., we set the radiation constant $\StefBoltzconst = 1$, speed of light $\sol = 1$, and the specific heat is $\specheat = 1$. The mass at time $\vart_{n}$ is defined as
\begin{equation}
    m^{n} = \sum_{i}\left(\StefBoltzconst\Temp^{n}_{i} + \frac{\epsi^{2}}{\sol}\meso^{n}_{i} + \frac{\specheat}{2}\Temp^{n}_{i} \right)\deltax.
\end{equation}

The spatial domain is divided into $\Nx = 501$ spatial cells, and we use $N = 100$ Legendre polynomials to represent the angular variable. The moment solutions (P$_{N}$) are computed using the modal macro-micro scheme \eqref{eq:LinMacMicsys}. For $\epsi = 1$, we choose a rank of $\rank = 5,15$ for the fixed-rank modal macro-micro BUG integrator (frBUG) and an initial rank of $\rank = 1$ for the modal macro-micro BUG integrator (BUG). For rank truncation we set the tolerance parameter to $\vartheta = 5\cdot10^{-2}\norm{\BSigma}_{2}$ and the end time is set to $\vart_{\text{end}} = 1.5$. The step size is chosen according to \eqref{eq:CFLCond} 
\begin{equation*}
    \deltat =  \underset{k}{\text{min}} \left\{  \frac{1}{5\sol\beta_{N}}\left( \frac{2\epsi\deltax}{\abs{\quadpt_{k}}} + \frac{\absorpcoefflb\deltax^{2}}{\quadpt_{k}^{2}} \right) \right\}
\end{equation*}
where, for $N = 100$ the step size is minimal for $\quadpt_{k} = -0.999719$ which gives the step size $\deltat \approx 0.005$. We compare the low-rank approximations with the moment solutions P$_{5}$, P$_{15}$ and P$_{100}$, and the results are given in \Cref{fig:Rectangular_pulse_1D_Kin}. The relative mass error of all the low-rank solutions as well as the P$_{100}$ solution is given in \Cref{fig:Rectangular_pulse_1D_mass_kin}. Note that the chosen step size for the P$_{5}$ and P$_{15}$ solutions differs from that of the rest of the solutions and is minimal for a different quadrature point, which we do not specify here. From the plots of temperature and scalar flux, we see that the solution of the fixed-rank modal macro-micro BUG integrator with $\rank = 15$ (BUG$_{15}$) and the modal macro-micro BUG integrator agree well with the full moment solution P$_{100}$ and are different from the Rosseland diffusion limit. Additionally, the BUG$_{5}$ approximation performs much better than the P$_{5}$ solution. All the integrators dissipate energy over time, as we see from \Cref{fig:Rectangular_pulse_1D_Diff}. 

\begin{figure}[H]
    \centering
    \begin{subfigure}[b]{0.49\linewidth}
        \includegraphics[width=0.9\linewidth]{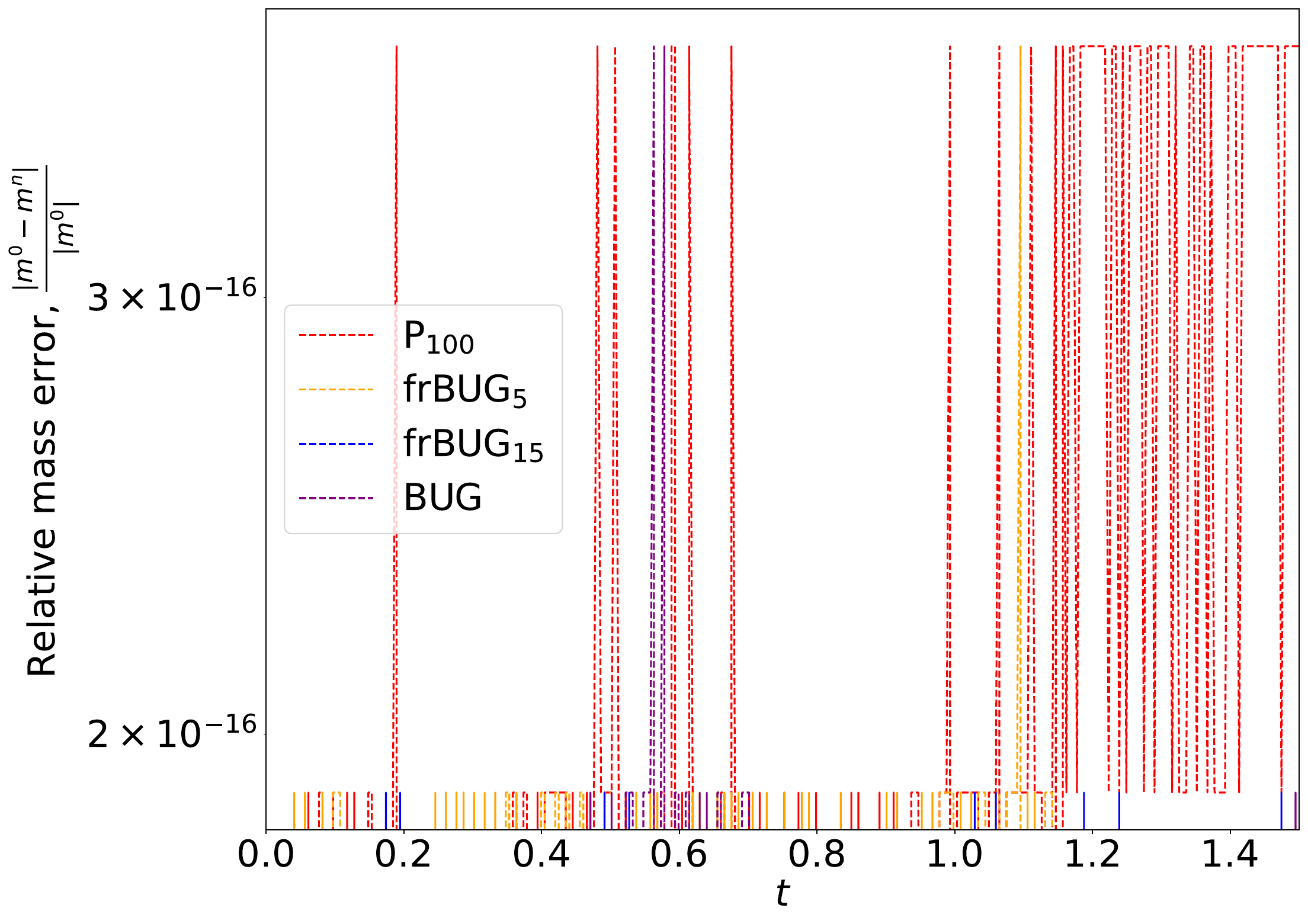}
        \caption{$\epsi = 1.0$}
         \label{fig:Rectangular_pulse_1D_mass_kin}
    \end{subfigure}
    \begin{subfigure}[b]{0.49\linewidth}
        \includegraphics[width=0.85\linewidth]{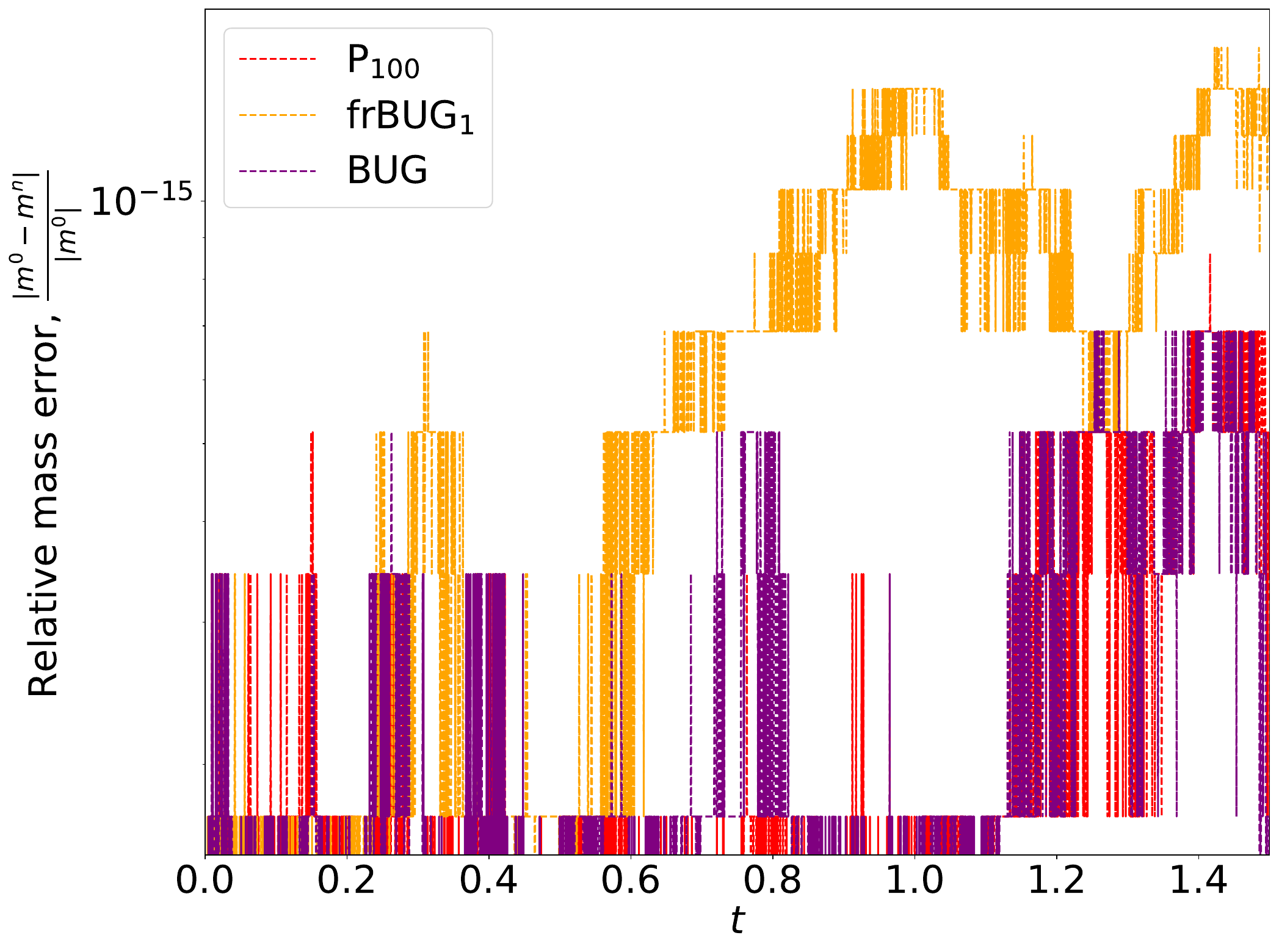}
        \caption{$\epsi = 10^{-5}$}
        \label{fig:Rectangular_pulse_1D_mass_dif}
    \end{subfigure}
    \caption{Relative mass error for the rectangular pulse test case in the kinetic and diffusive regime.}
    \label{fig:Rectangular_pulse_1D_mass}
\end{figure}

\begin{figure}[H]
    \centering
    \begin{subfigure}[b]{0.49\linewidth}
        \includegraphics[width=0.9\linewidth]{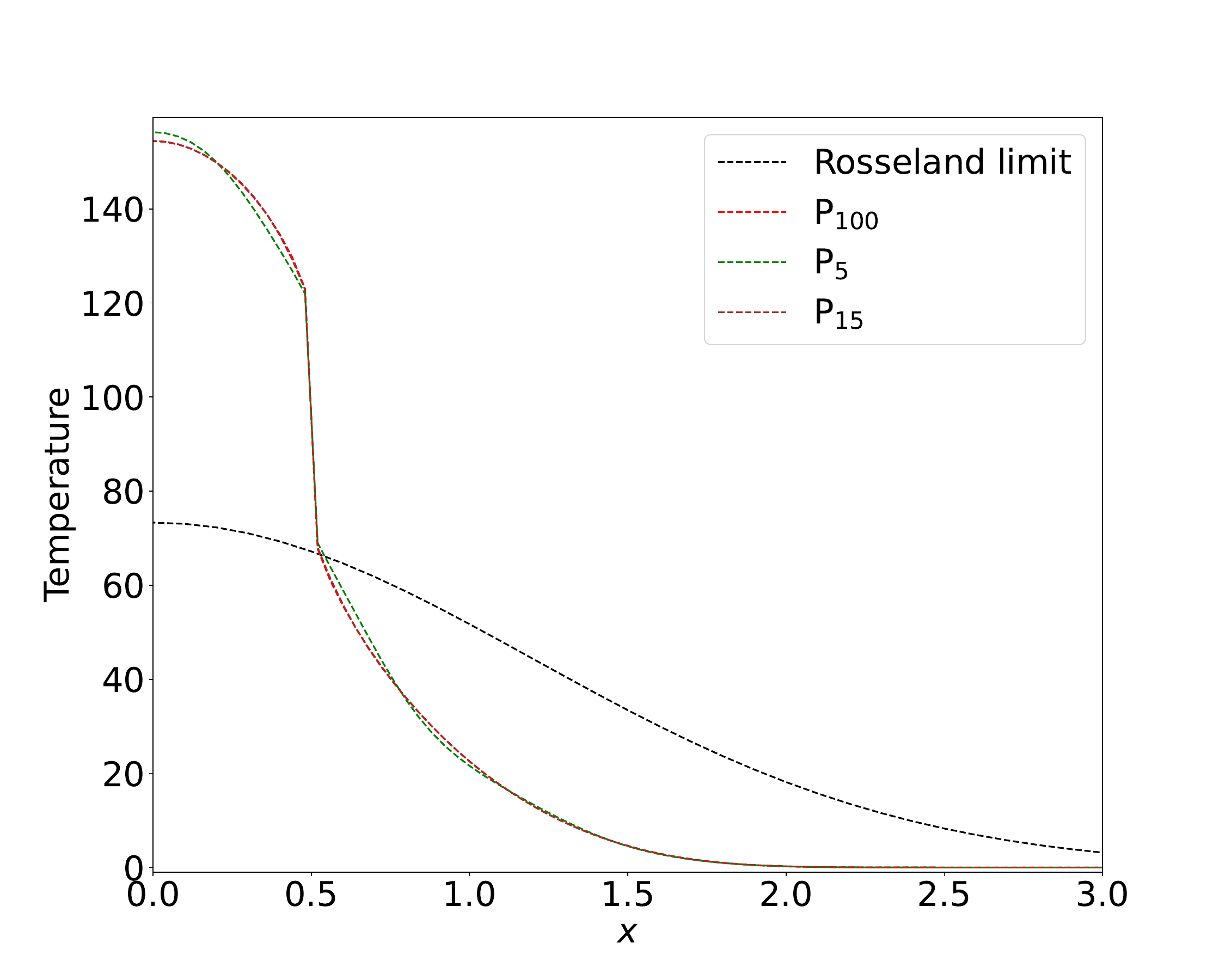}
    \end{subfigure}
    \begin{subfigure}[b]{0.49\linewidth}
        \includegraphics[width=0.9\linewidth]{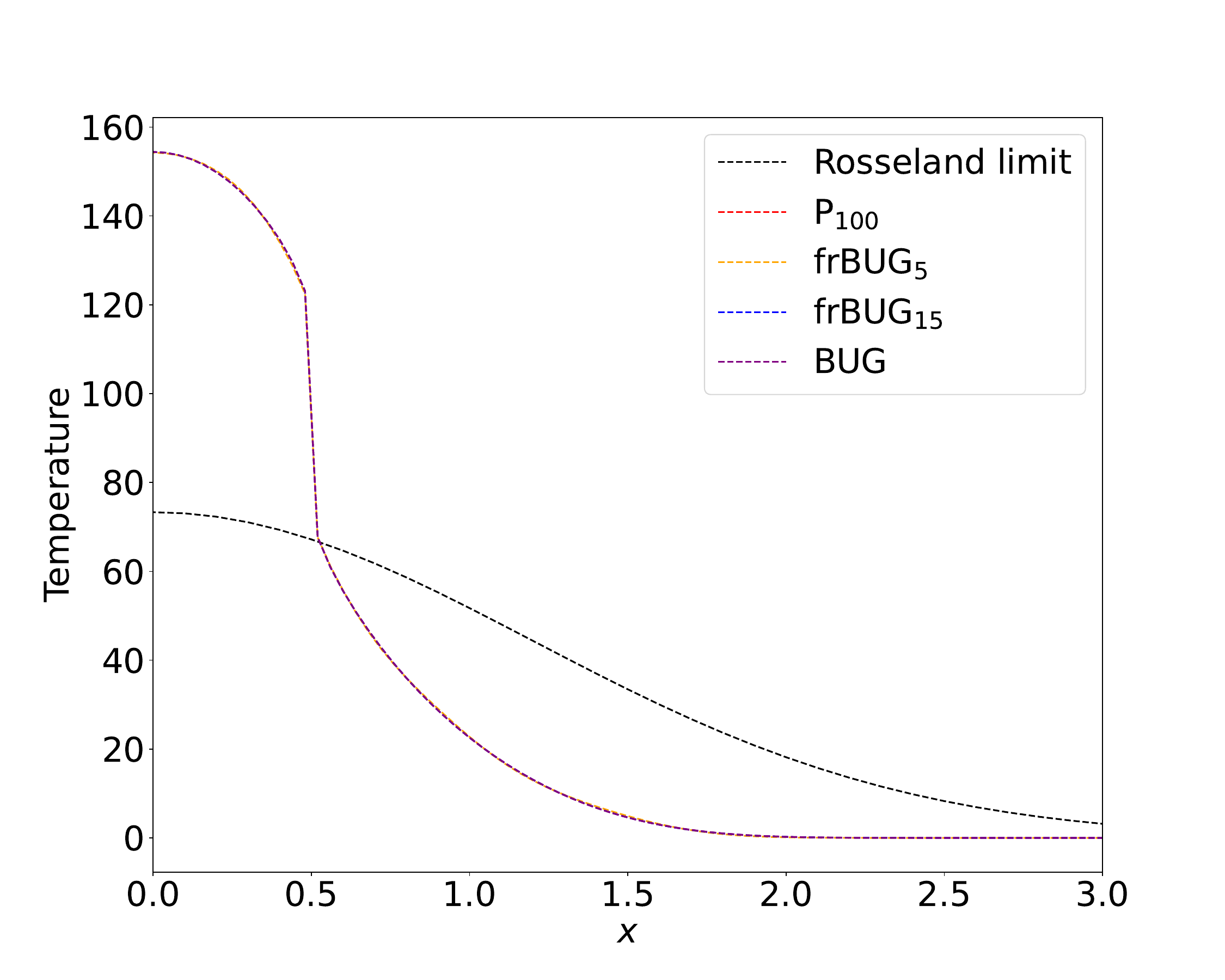}
    \end{subfigure}

    \begin{subfigure}[b]{0.49\linewidth}
        \includegraphics[width=0.9\linewidth]{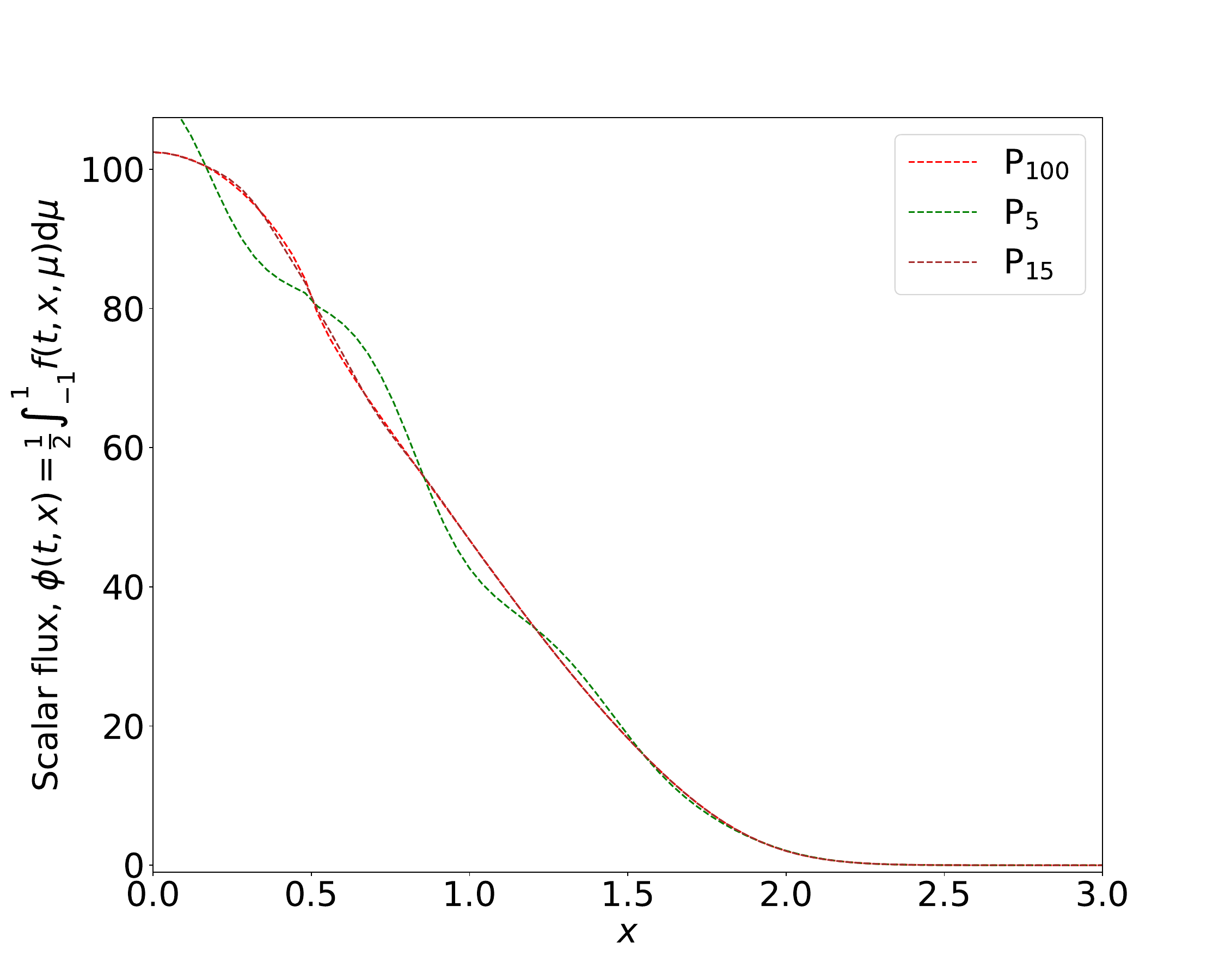}
        \caption{moment methods}
    \end{subfigure}
    \begin{subfigure}[b]{0.49\linewidth}
        \includegraphics[width=0.9\linewidth]{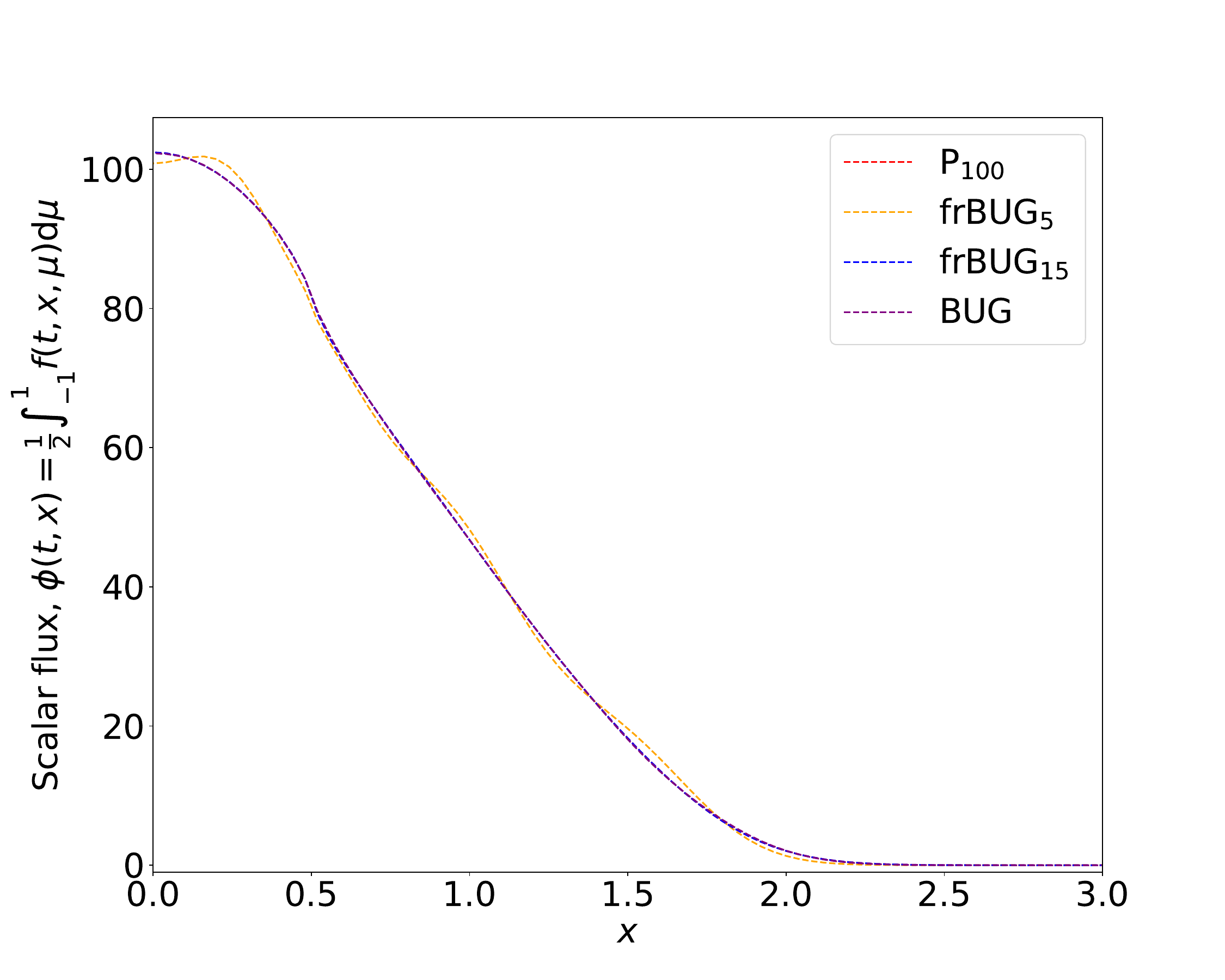}
        \caption{low-rank methods}
    \end{subfigure}

    \begin{subfigure}[b]{0.49\linewidth}
        \includegraphics[width=0.85\linewidth]{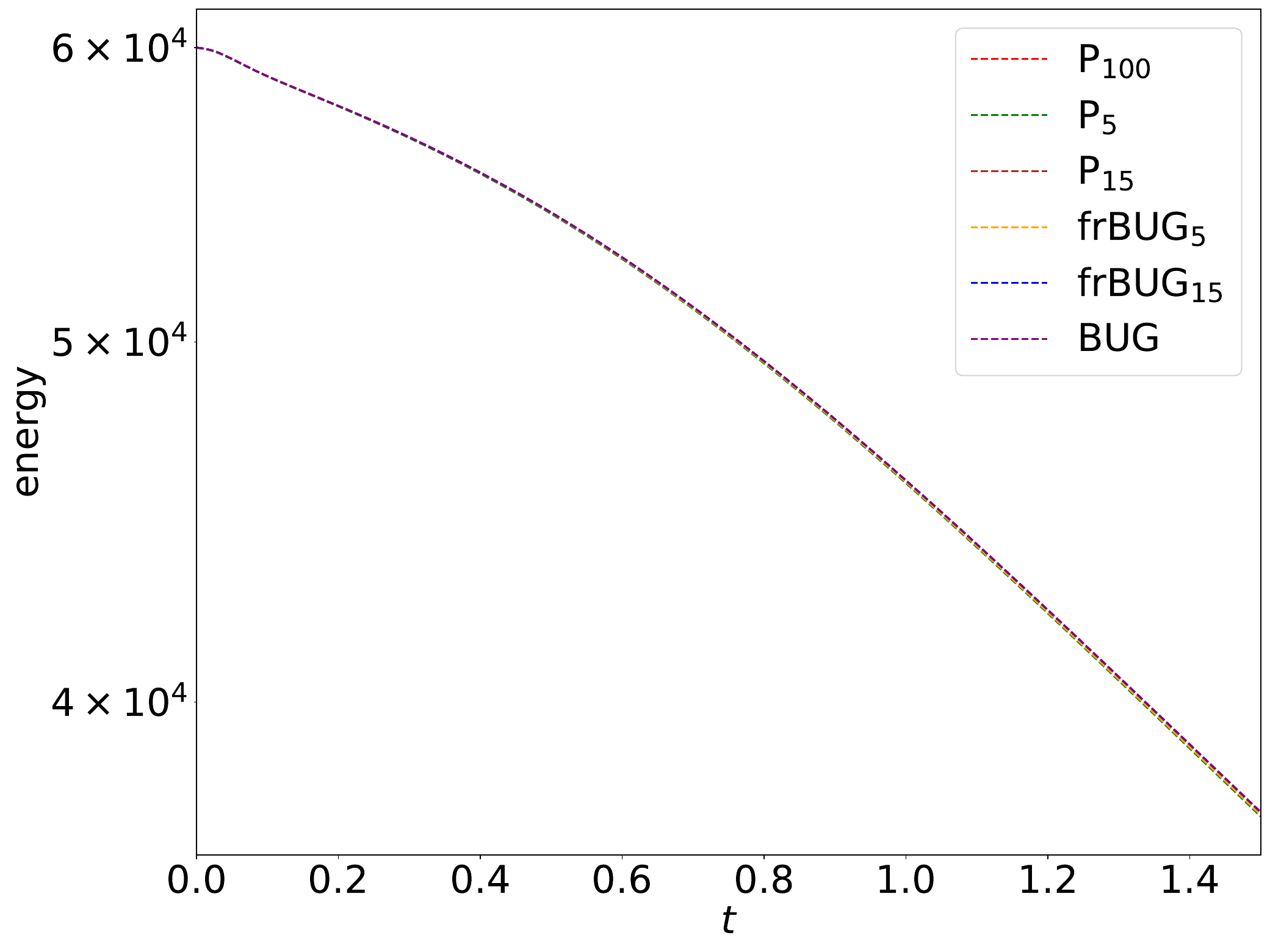}
        \caption{energy over time}
    \end{subfigure}
    \begin{subfigure}[b]{0.49\linewidth}
        \includegraphics[width=0.9\linewidth]{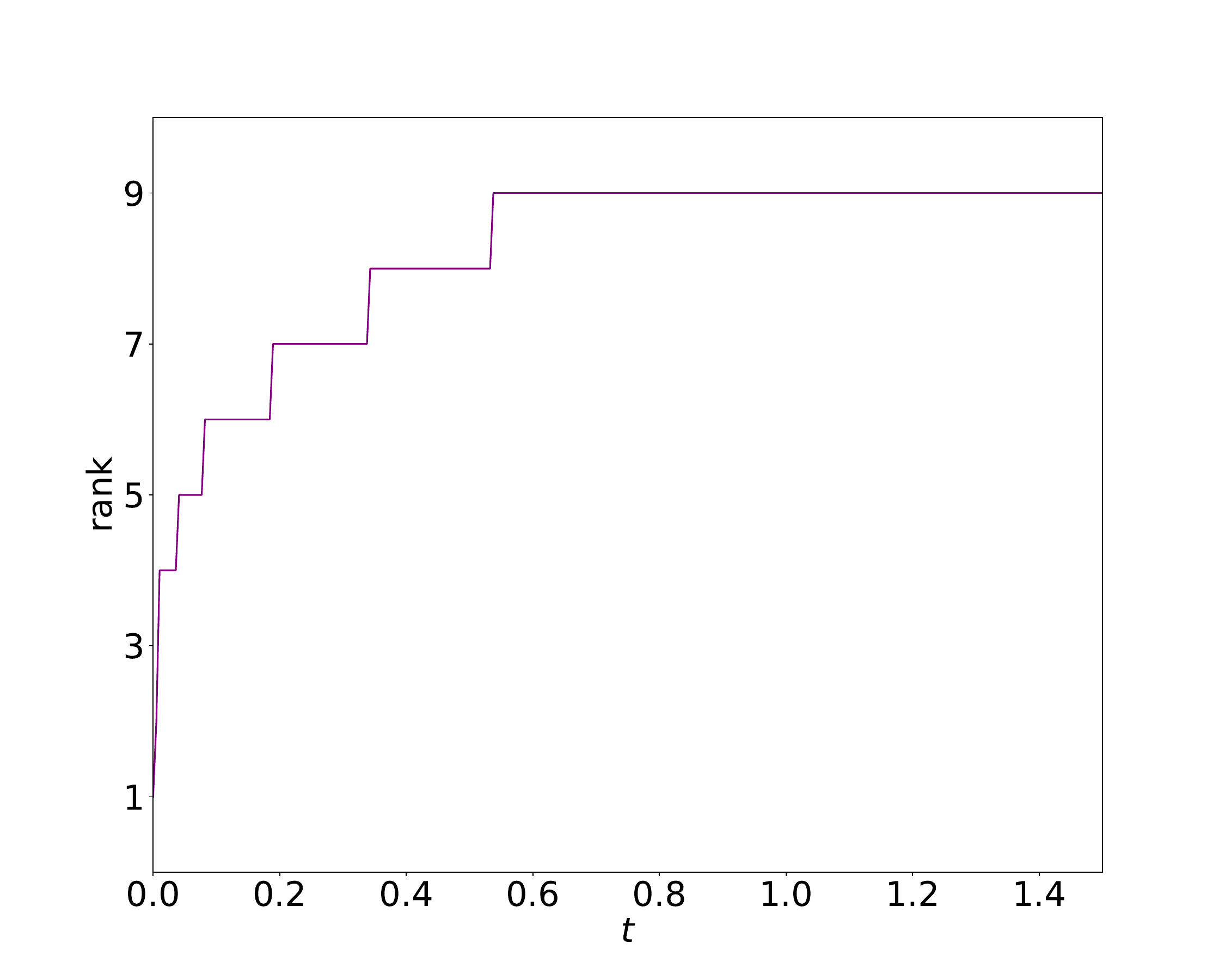}
        \caption{rank over time BUG}
    \end{subfigure}
    \caption{Numerical results of the rectangular pulse test case in the kinetic regime, i.e., $\epsi = 1$ at $\vart = 1.5$. In the first row, we present the temperature profile at end-time for the moment and low-rank methods; in the second row, we have the corresponding scalar flux. In the last row, we have the energy of the system over time for all the methods and the rank evolution of the BUG integrator.}
    \label{fig:Rectangular_pulse_1D_Kin}
\end{figure}

For $\epsi = 10^{-5}$, we use a coarser spatial grid with $\Nx = 201$ cells. The rank used for the fixed-rank modal macro-micro BUG integrator is $\rank = 1$, and the modal macro-micro BUG integrator starts with the same initial rank $\rank = 1$. The tolerance parameter and end time are the same as in the kinetic regime. We see from \Cref{fig:Rectangular_pulse_1D_Diff,fig:Rectangular_pulse_1D_mass} that the solutions from the full modal macro-micro integrator, fixed-rank modal macro-micro BUG integrator, and modal macro-micro BUG integrator agree well with the limiting Rosseland approximation. Additionally, all the methods dissipate energy over time. 

\begin{figure}[H]
    \centering
    \begin{subfigure}[b]{0.49\linewidth}
        \includegraphics[width=0.9\linewidth]{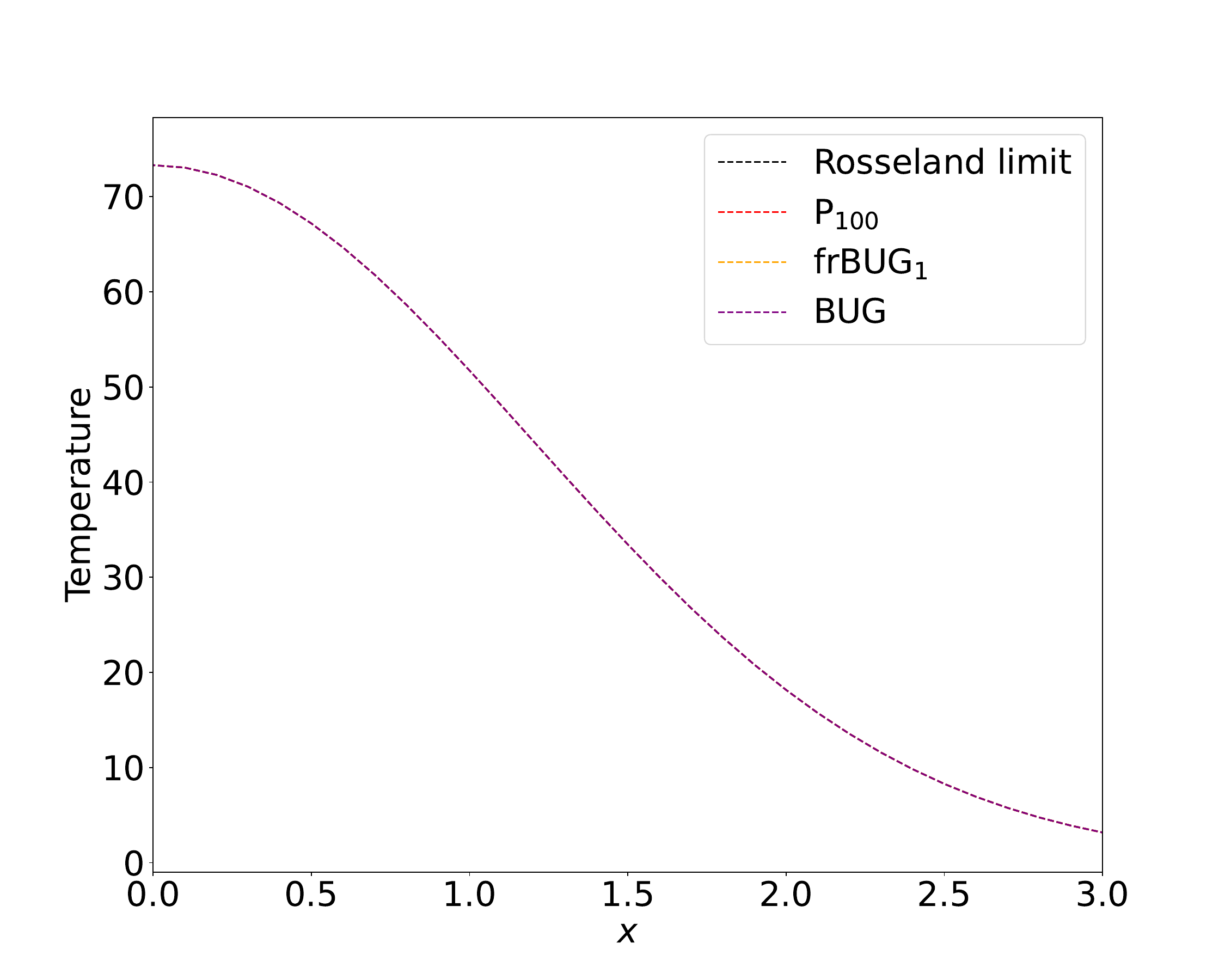}
    \end{subfigure}
    \begin{subfigure}[b]{0.49\linewidth}
        \includegraphics[width=0.9\linewidth]{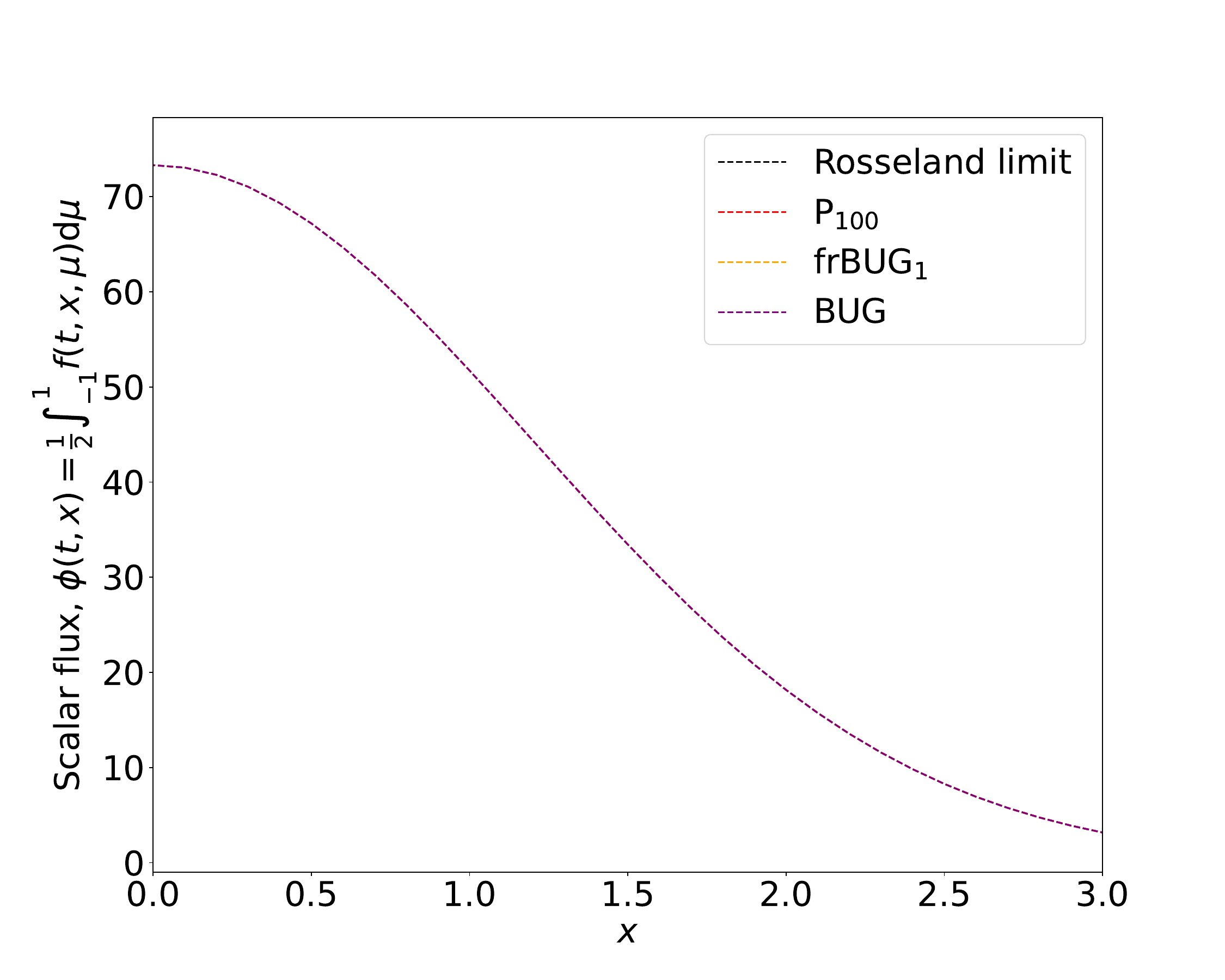}
    \end{subfigure}

    \begin{subfigure}[b]{0.49\linewidth}
        \includegraphics[width=0.9\linewidth]{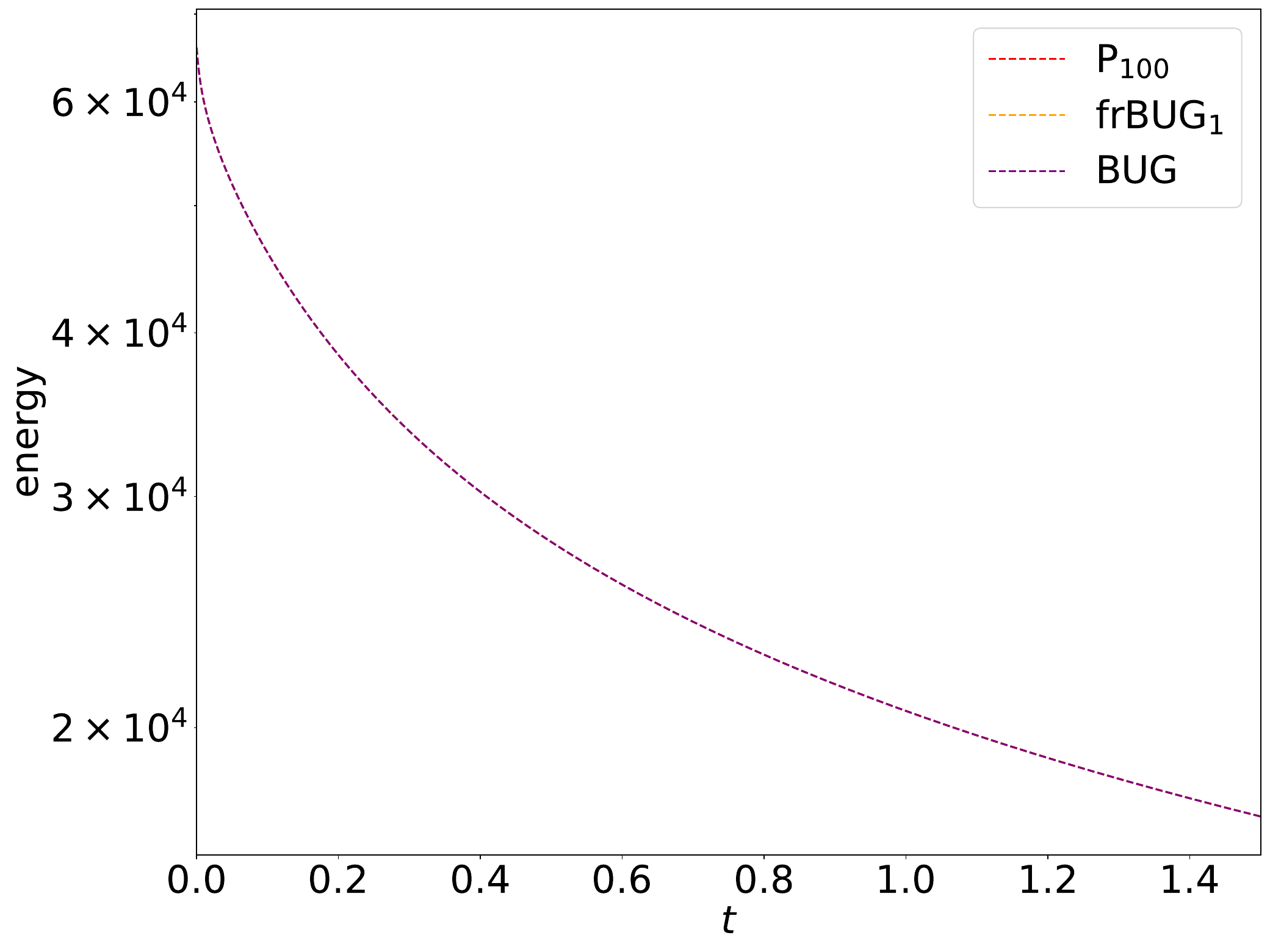}
    \end{subfigure}
    \begin{subfigure}[b]{0.49\linewidth}
        \includegraphics[width=0.9\linewidth]{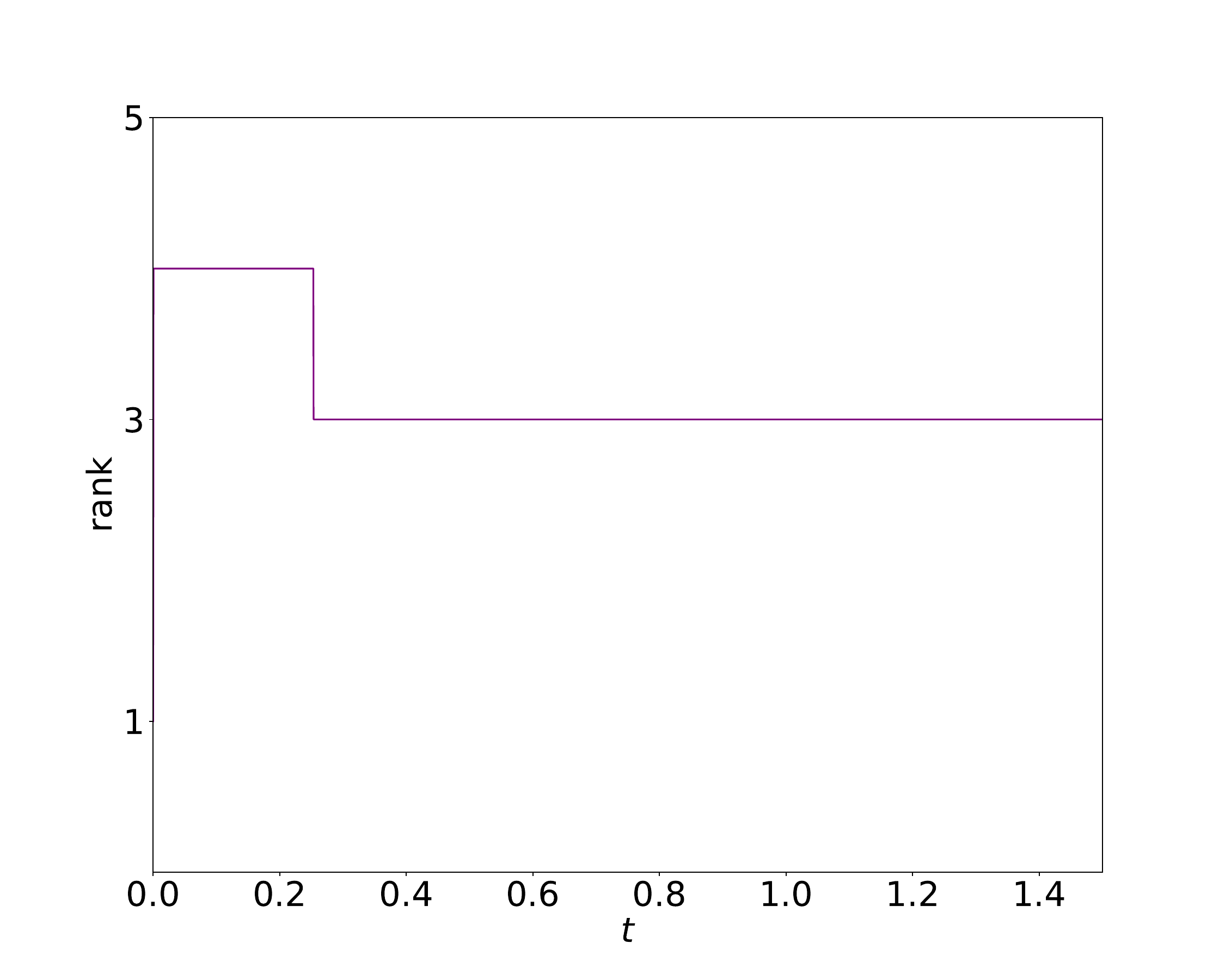}
    \end{subfigure}
    \caption{Numerical results of the rectangular pulse test case in the diffusive regime, i.e. $\epsi = 10^{-5}$ at $\vart = 1.5$. Top left: Temperature profile, Top right: Scalar flux, Bottom left: Energy of the system over time for all the methods, Bottom right: Rank evolution of rank-adaptive integrator over time.}
    \label{fig:Rectangular_pulse_1D_Diff}
\end{figure}

\subsection{Absorber test case}
To study the behavior of the methods in an inhomogeneous medium, we place an absorber in the middle of the domain. That is, we set the absorption coefficient to
\begin{equation*}
    \absorpcoeff(\varx) = \begin{cases}
        5, & \mathrm{if } -0.25 \leq \varx \leq 0.25,\\
        0.5 & \mathrm{else}
    \end{cases}.
\end{equation*}
The remaining parameters, along with the end time, are the same as in the rectangular pulse test case. The temperature and scalar flux, along with other parameters, are depicted in \Cref{fig:Absorber_1D_Kin} for $\epsi = 1$ and in \Cref{fig:Absorber_1D_Diff} for $\epsi = 10^{-5}$.

\begin{figure}[H]
    \centering
    \begin{subfigure}[b]{0.49\linewidth}
        \includegraphics[width=0.9\linewidth]{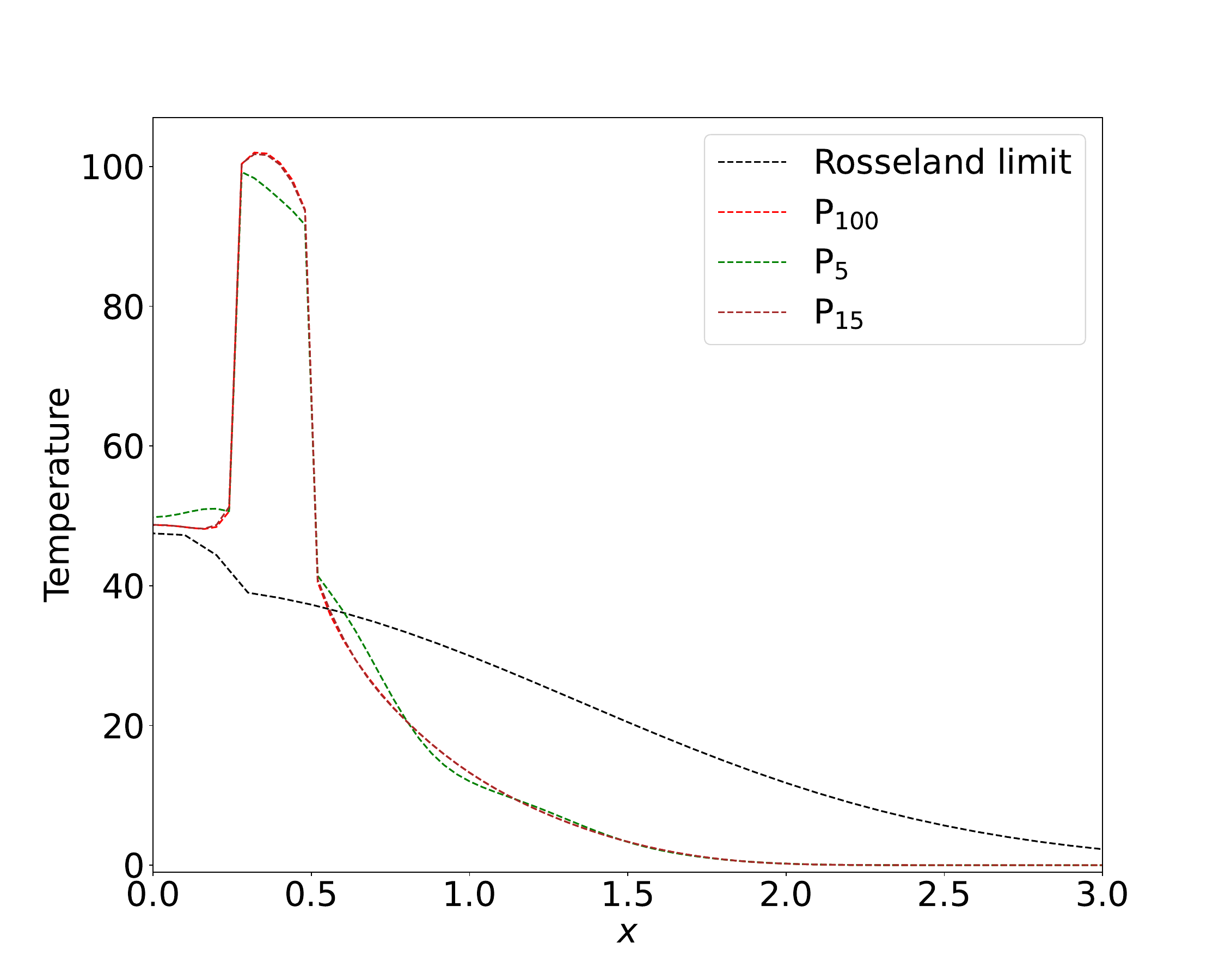}
    \end{subfigure}
    \begin{subfigure}[b]{0.49\linewidth}
        \includegraphics[width=0.9\linewidth]{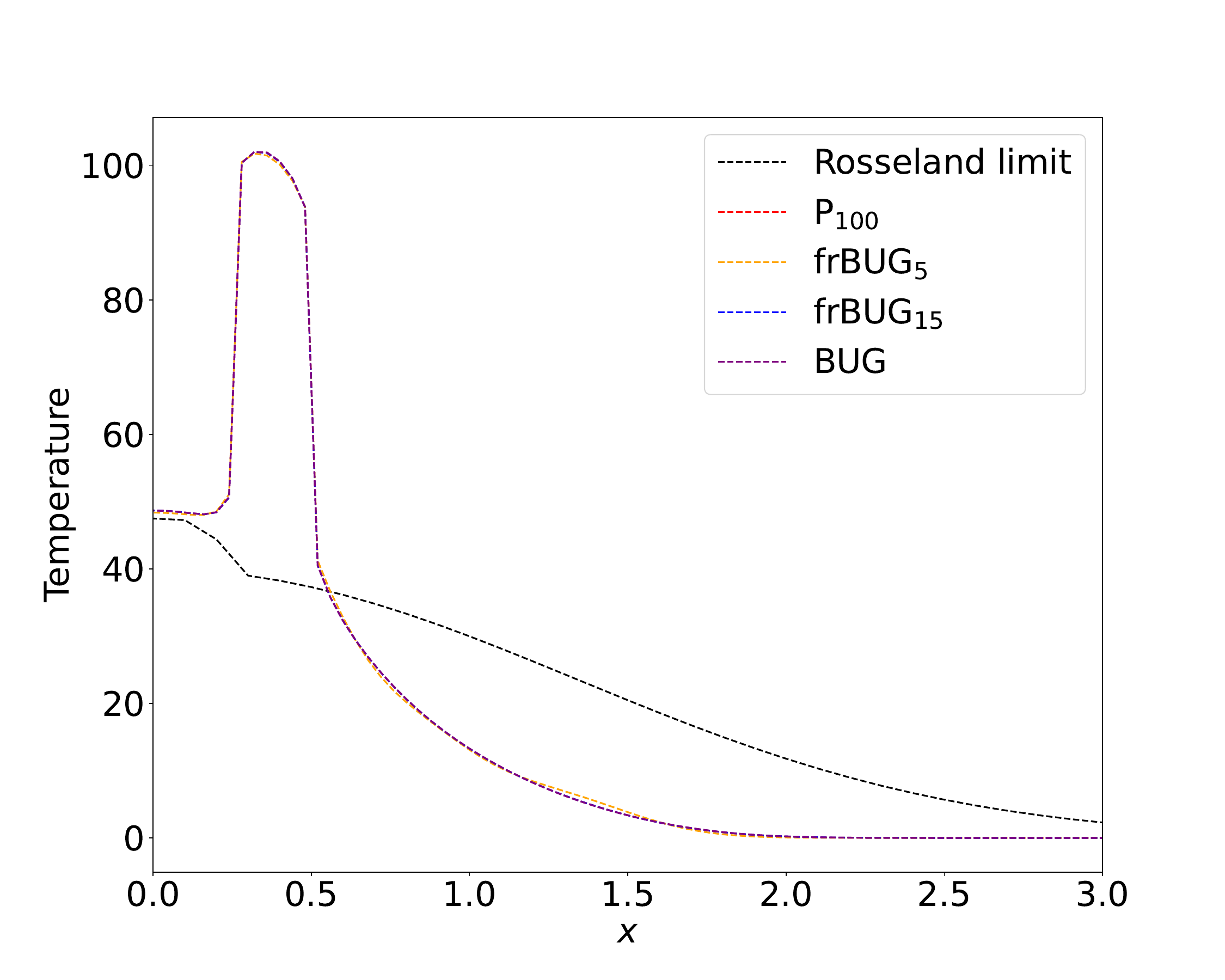}
    \end{subfigure}

    \begin{subfigure}[b]{0.49\linewidth}
        \includegraphics[width=0.9\linewidth]{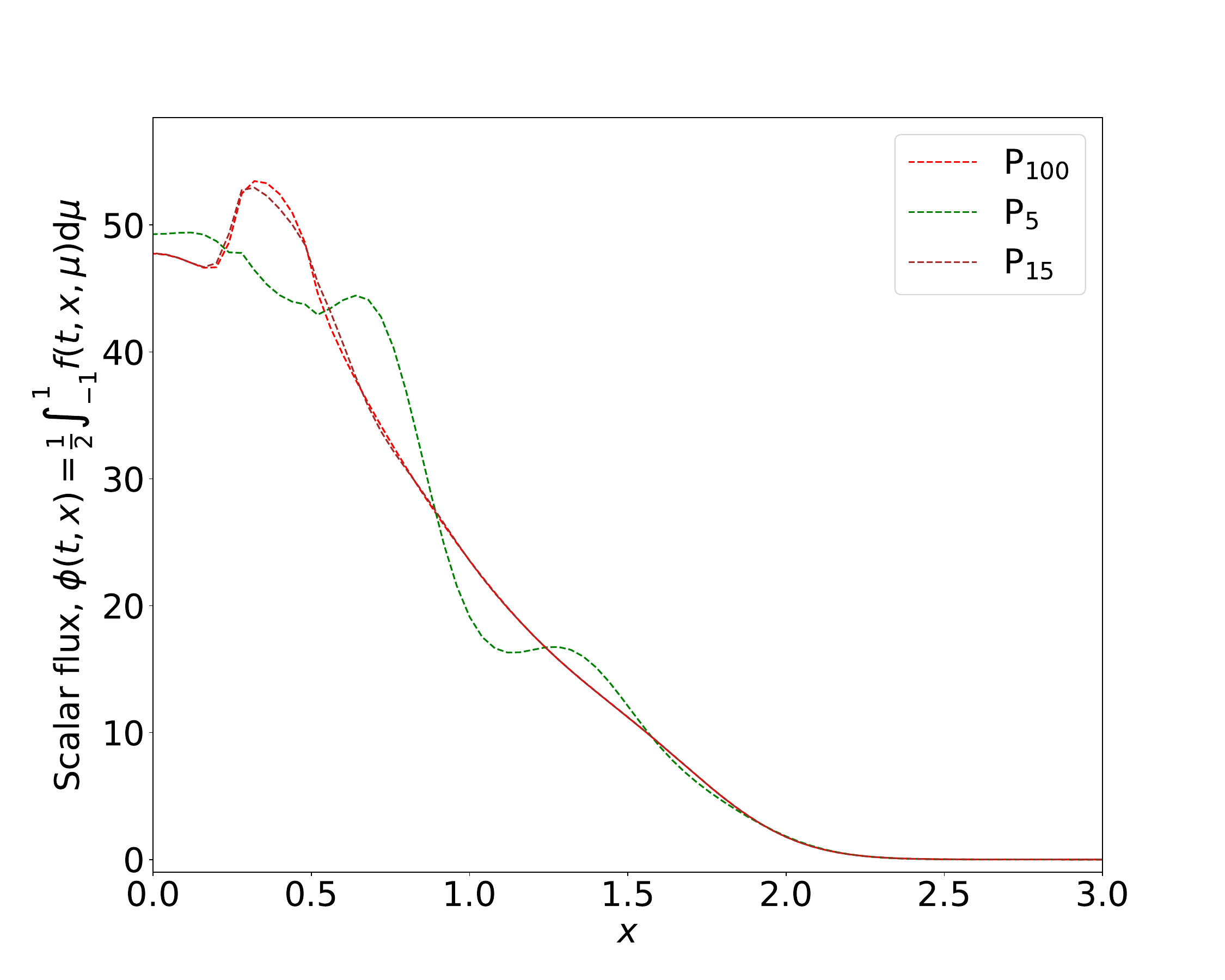}
        \caption{moment methods}
    \end{subfigure}
    \begin{subfigure}[b]{0.49\linewidth}
        \includegraphics[width=0.9\linewidth]{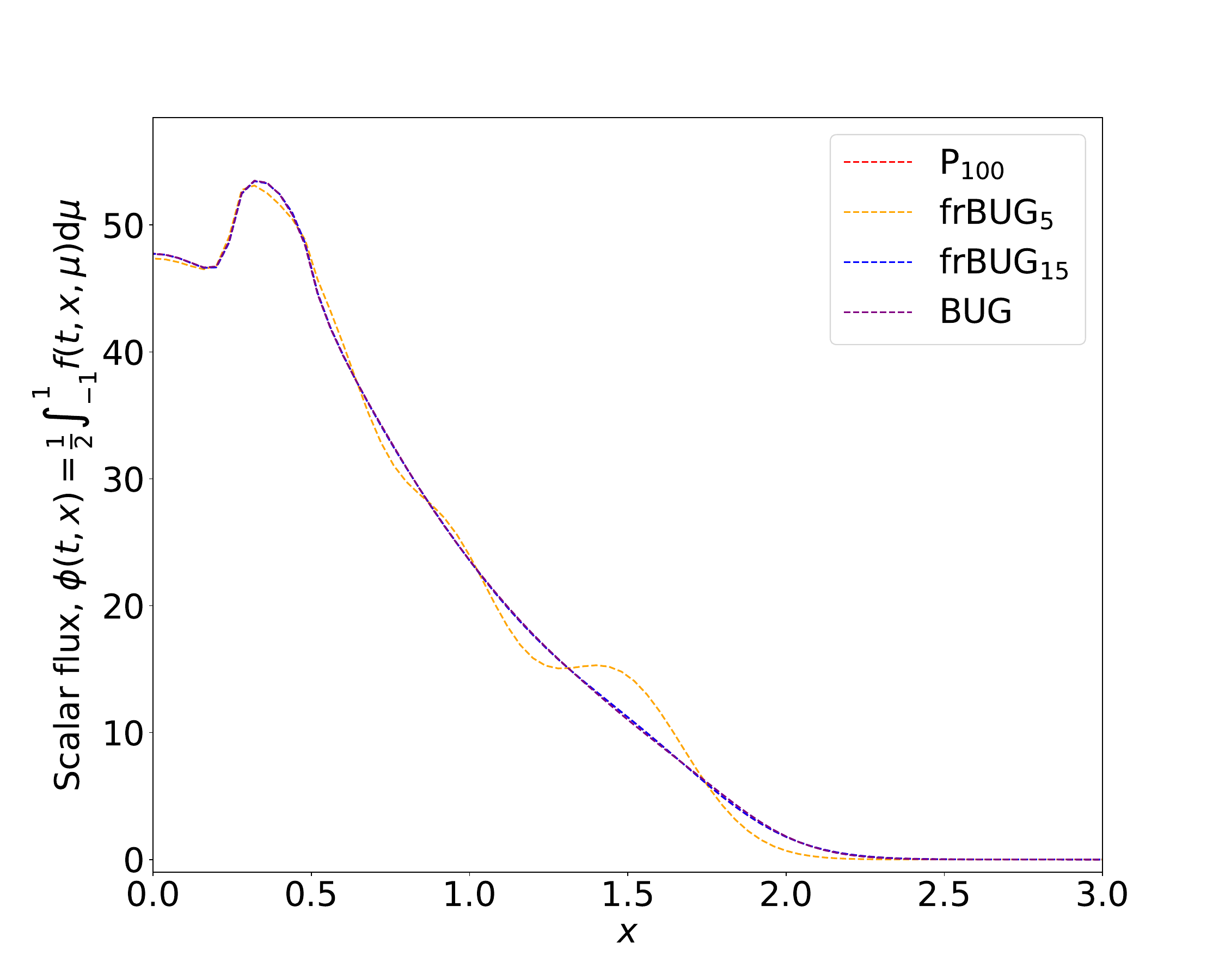}
        \caption{low-rank methods}
    \end{subfigure}

    \begin{subfigure}[b]{0.49\linewidth}
        \includegraphics[width=0.85\linewidth]{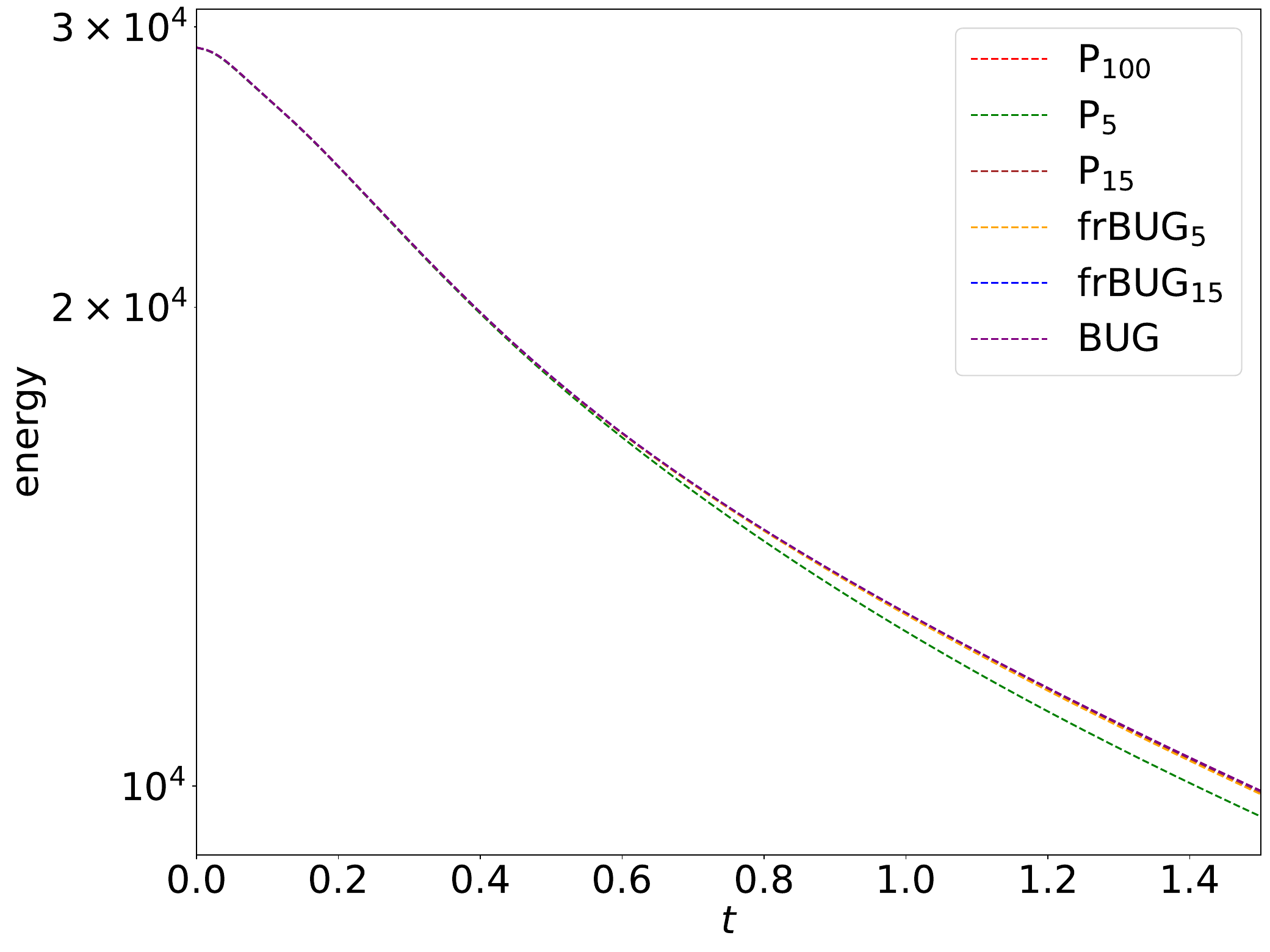}
        \caption{energy over time}
    \end{subfigure}
    \begin{subfigure}[b]{0.49\linewidth}
        \includegraphics[width=0.9\linewidth]{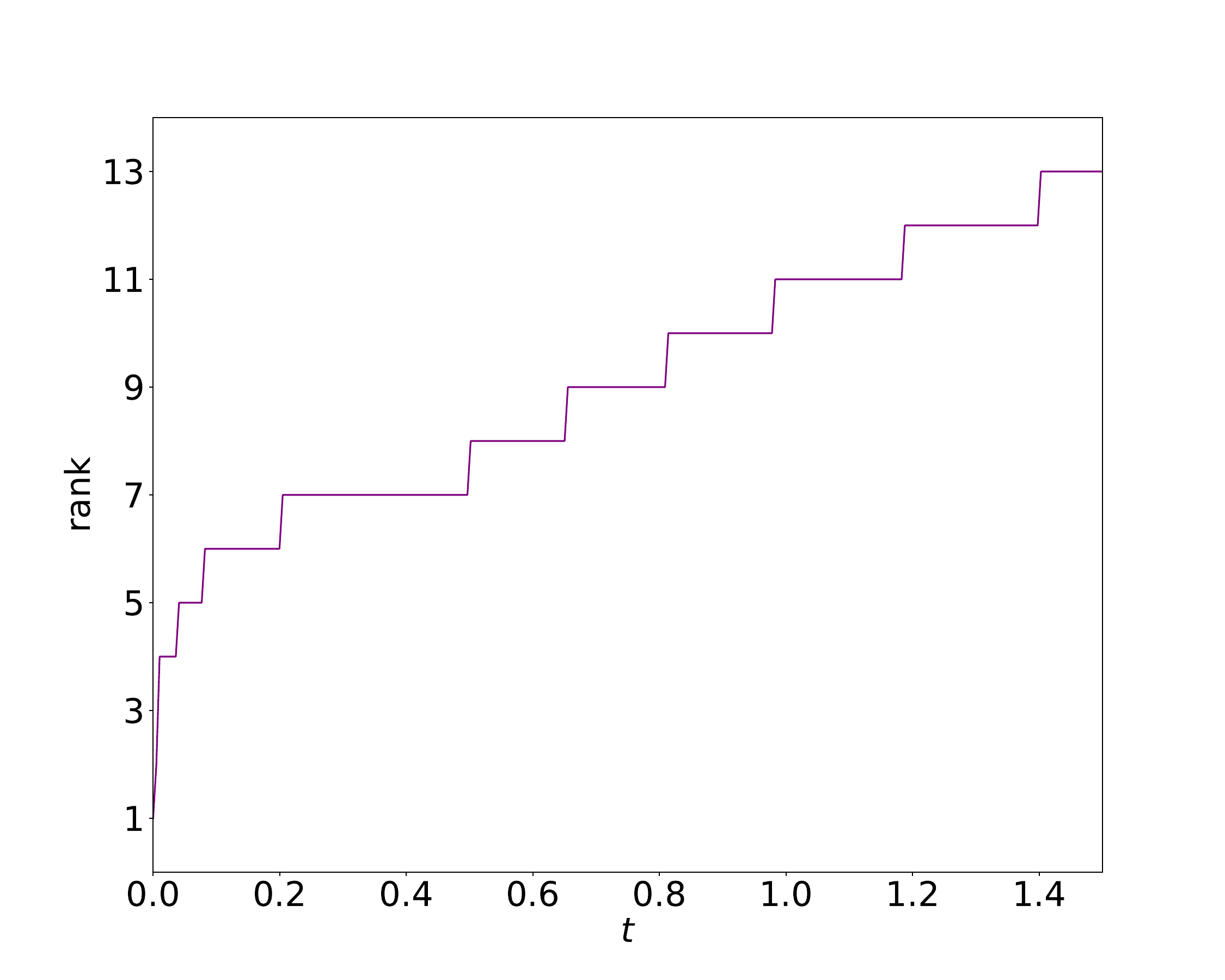}
        \caption{rank over time BUG}
    \end{subfigure}
    \caption{ Numerical results of the absorber test case in the kinetic regime, i.e., $\epsi = 1$ at $\vart = 1.5$. In the first row, we present the temperature profile at end-time for the moment and low-rank methods; in the second row, we have the corresponding scalar flux. In the last row, we have the energy of the system over time for all the methods and the rank evolution of the BUG integrator.}
    \label{fig:Absorber_1D_Kin}
\end{figure}

\begin{figure}[H]
    \centering
    \begin{subfigure}[b]{0.49\linewidth}
        \includegraphics[width=0.9\linewidth]{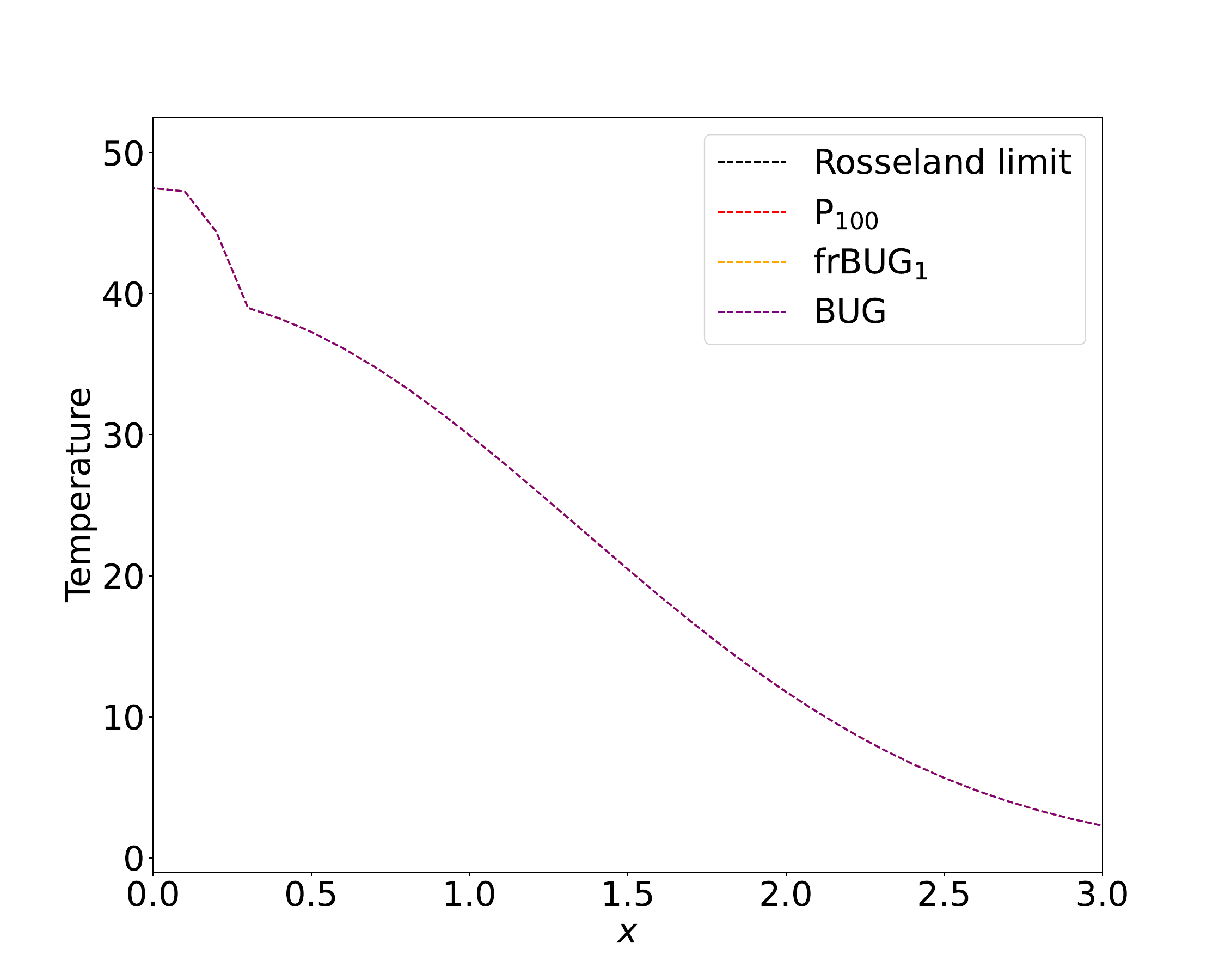}
    \end{subfigure}
    \begin{subfigure}[b]{0.49\linewidth}
        \includegraphics[width=0.9\linewidth]{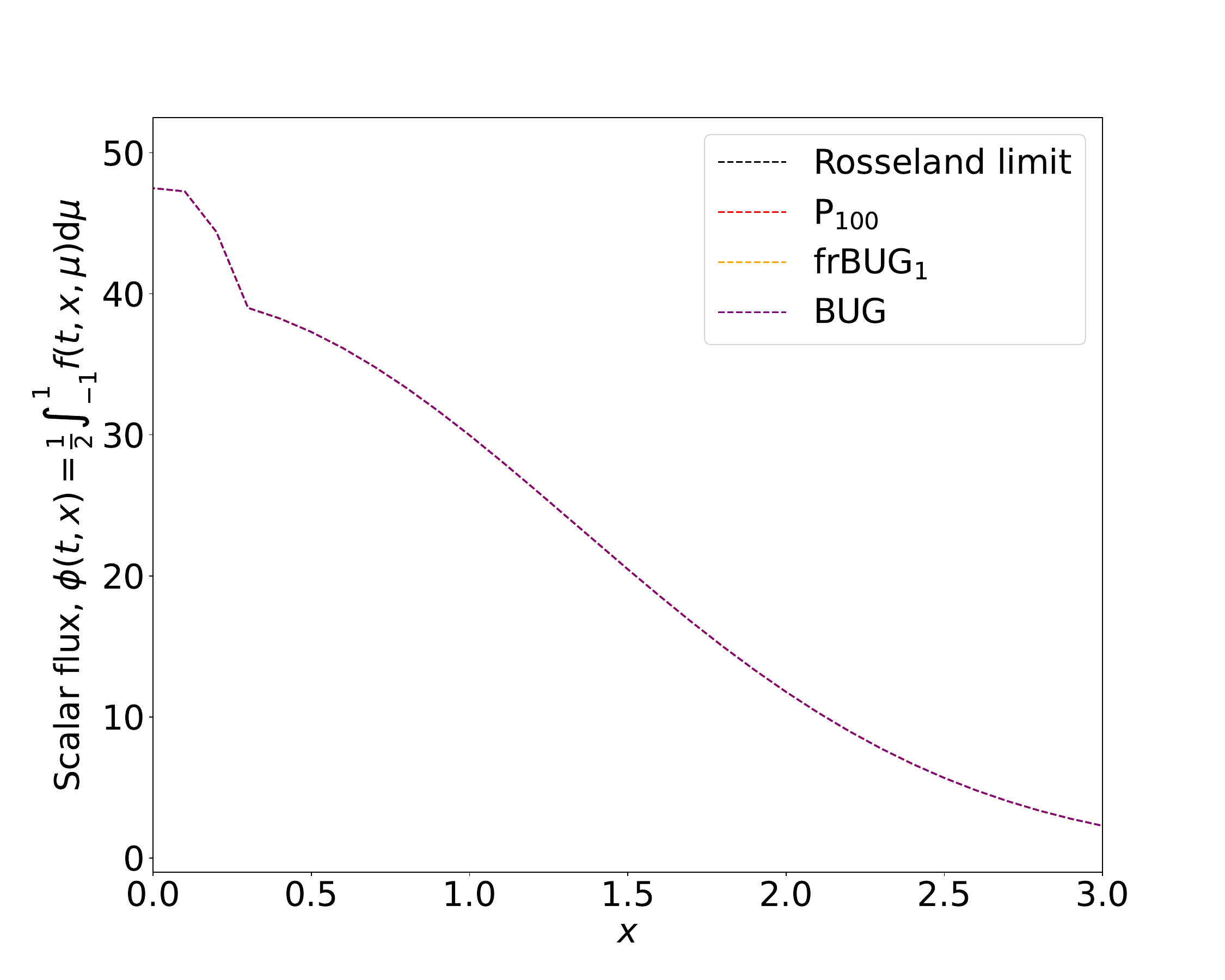}
    \end{subfigure}

    \begin{subfigure}[b]{0.49\linewidth}
        \includegraphics[width=0.85\linewidth]{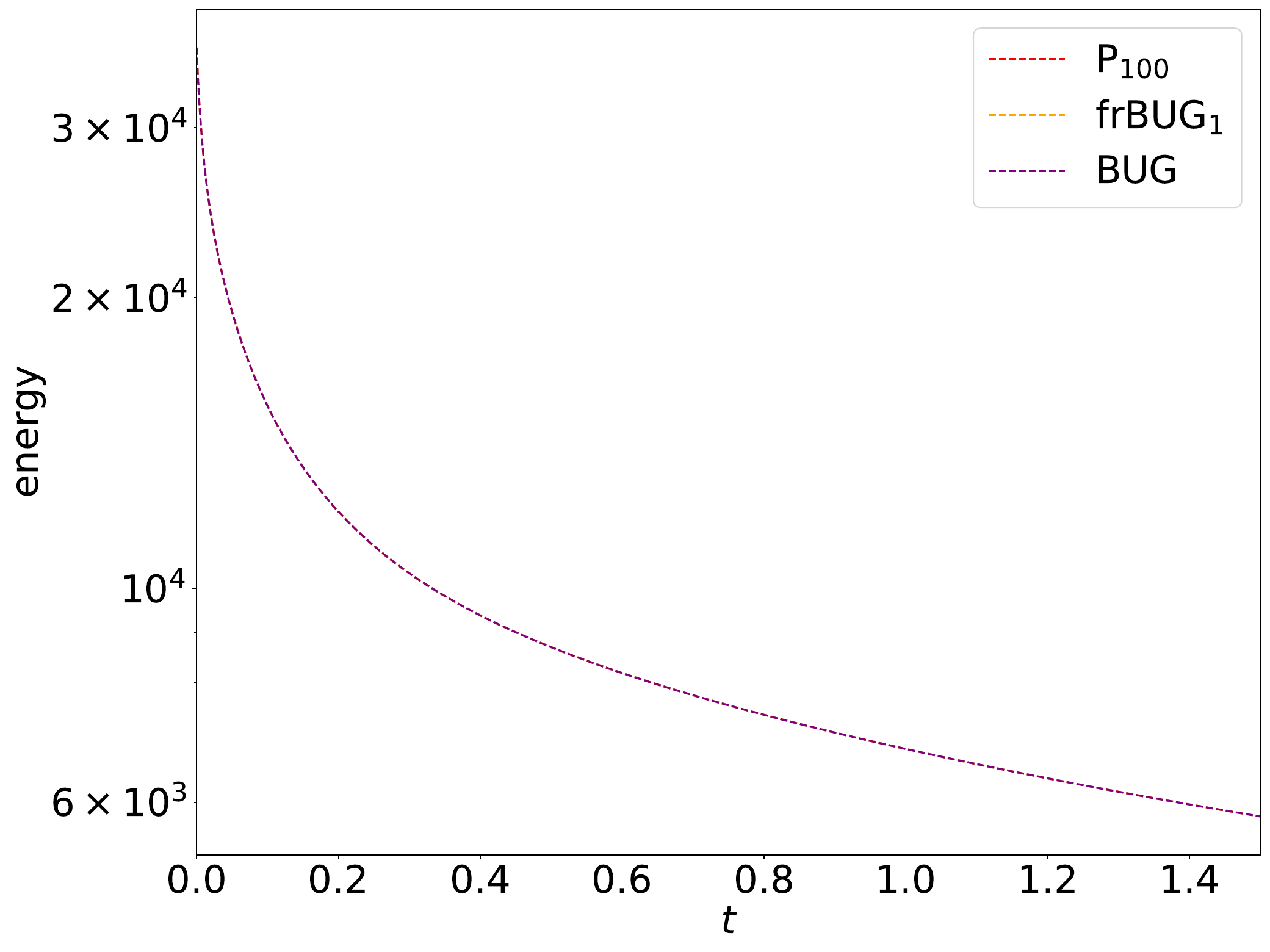}
    \end{subfigure}
    \begin{subfigure}[b]{0.49\linewidth}
        \includegraphics[width=0.9\linewidth]{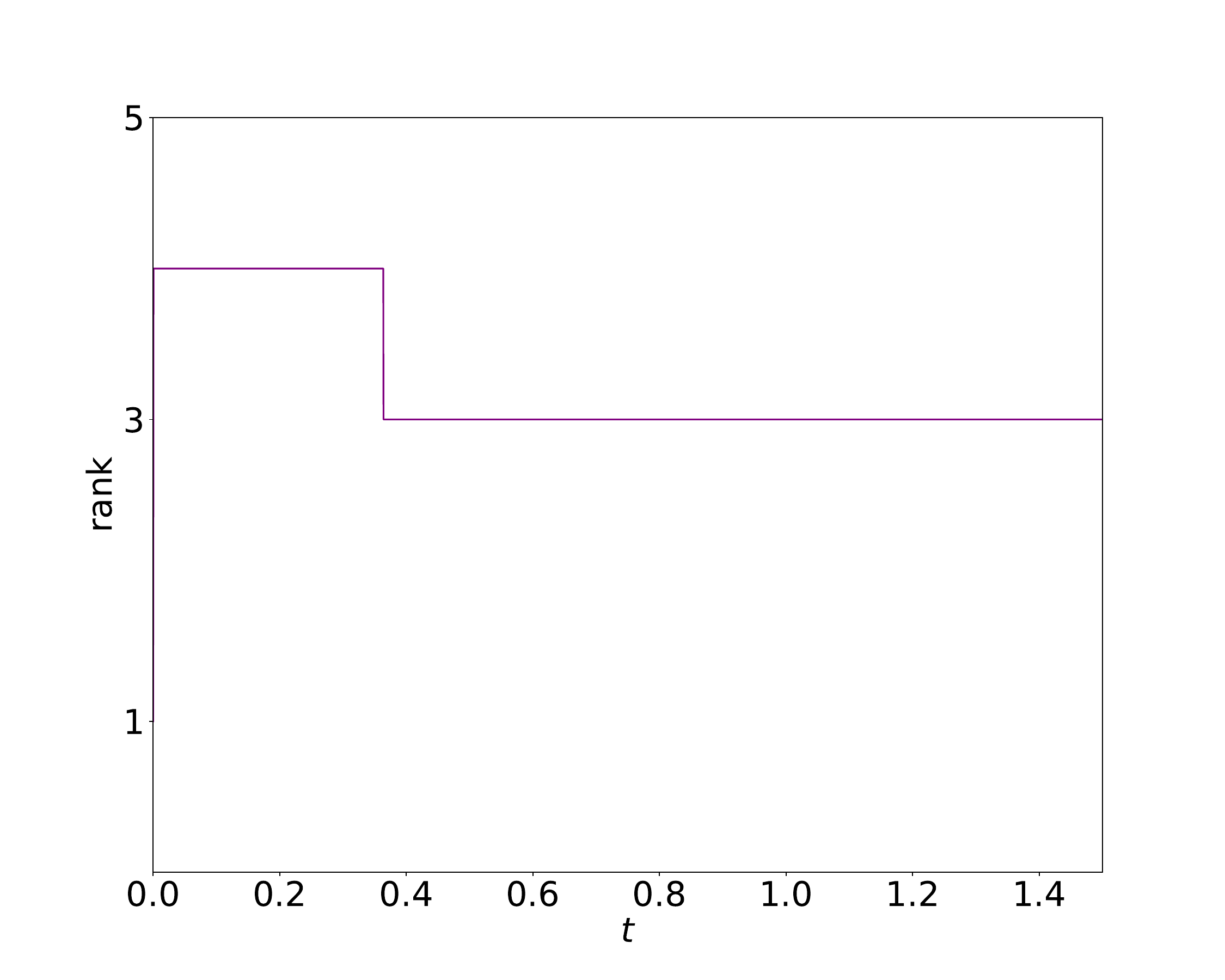}
    \end{subfigure}
    \caption{Numerical results of the absorber test case in the diffusive regime, i.e., $\epsi = 10^{-5}$ at $\vart = 1.5$. Top left: Temperature profile, Top right: Scalar flux, Bottom left: Energy of the system over time for all the methods, Bottom right: Rank evolution of rank-adaptive integrator over time.}
    \label{fig:Absorber_1D_Diff}
\end{figure}

\section{Conclusion}
In this work, we propose a modal macro-micro BUG integrator for the radiative heat transfer equations. We show that this integrator is energy stable for the linearized problem under a CFL condition that captures the kinetic regime and diffusive regime of the thermal radiative transfer equations. Additionally, the full modal macro-micro scheme's stability and the fixed-rank macro-micro BUG scheme have been investigated. 

\section*{Declaration of competing interests}
The authors declare that they have no known competing financial interests or personal relationships that could have
appeared to influence the work reported in this paper.
\section*{Acknowledgment}
\noindent The work of Chinmay Patwardhan and Martin Frank was funded by the Deutsche Forschungsgemeinschaft (DFG, German Research Foundation) – Project-ID 258734477 – SFB 1173.
\printbibliography
\appendix
\begin{appendices}
\section{Proof of Theorem 2}\label{appendix:ProofLinMMMstab}
\noindent We start by stating some lemmas and properties used to prove stability in energy norm (\Cref{theorem:LinMacMicEnStab}) for the linearized modal macro-micro scheme \eqref{eq:LinMacMicsys}.
\begin{lemma}[Lemma~3.3 \cite{einkemmer2022asymptoticpreserving}(Summation by parts)]\label{lemma:SumbyParts}
    For vectors $\bphi_{i+1/2}, \bzeta_{i+1/2}\in\R^{N}$ where $i = 0,\ldots,\Nx$, the equality
    \begin{equation}
        \sum_{i}\bzeta^{\top}_{i+1/2}\Diffpm\bphi_{i+1/2} = -\sum_{i}(\Diffmp\bzeta_{i+1/2})^{\top}\bphi_{i+1/2} 
    \end{equation}
    holds for periodic or zero values at the boundary.
\end{lemma}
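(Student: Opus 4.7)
The plan is to verify the identity by direct algebraic manipulation: expand the one-sided difference operators using their defining formulas, apply a discrete index shift to align the half-integer grid points, and invoke the boundary hypothesis to discard residual endpoint contributions. I would first treat the case $\Diffpm = \Diffp$ and $\Diffmp = \Diffm$; the remaining case then follows by the symmetric manipulation with the roles of $\bzeta$ and $\bphi$ exchanged.

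For the first case, I would start from the definition
$$\Diffp\bphi_{i+1/2} = \frac{\bphi_{i+3/2} - \bphi_{i+1/2}}{\deltax},$$
write the left-hand side as the difference of two sums, and re-index the first sum via $j = i+1$ so that both sums involve $\bphi_{j+1/2}$ at a common half-integer index. Re-collecting produces
$$-\frac{1}{\deltax}\sum_{j}(\bzeta_{j+1/2} - \bzeta_{j-1/2})^{\top}\bphi_{j+1/2},$$
which by the definition of $\Diffm$ is exactly the claimed right-hand side.

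The only delicate point is the bookkeeping of endpoint terms introduced by the index shift. Under periodic boundary conditions the shifted sum cycles over an identical range, so the extra term at the upper boundary and the missing term at the lower boundary coincide; under zero boundary values the corresponding endpoint contributions vanish outright. This is precisely the role of the boundary hypothesis, and beyond this minor bookkeeping the proof is purely computational with no further obstacles.
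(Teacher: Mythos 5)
Your argument is correct: expanding $\Diffp\bphi_{i+1/2}=(\bphi_{i+3/2}-\bphi_{i+1/2})/\deltax$, shifting the index in the first sum, and recollecting indeed yields $-\sum_i(\Diffm\bzeta_{i+1/2})^{\top}\bphi_{i+1/2}$, with the periodic or zero boundary hypothesis exactly absorbing the endpoint mismatch, and the other sign case follows by symmetry as you say. The paper itself states this lemma without proof, citing Lemma~3.3 of \cite{einkemmer2022asymptoticpreserving}; your index-shift computation is the standard argument one would find there, so there is nothing further to add.
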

\begin{lemma}\label{lemma:boundforDp}
    Let $\bphi_{i+1/2}\in\R^{N+1}$, for $i = 0,\ldots,Nx$, then the following inequality holds
    \begin{equation}
        \sum_{i}\left( \Diffp\bphi_{i+1/2} \right)^{2} \leq \frac{4}{\deltax^{2}}\sum_{i}(\bphi_{i+1/2})^{2}.
    \end{equation}
\end{lemma}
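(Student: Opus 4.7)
The plan is to reduce the inequality to an elementary pointwise bound and then sum, being careful with the boundary values. Concretely, I would start from the defining expression
\begin{equation*}
    \left(\Diffp\bphi_{i+1/2}\right)^{2} = \frac{1}{\deltax^{2}}\bigl(\bphi_{i+3/2} - \bphi_{i+1/2}\bigr)^{\top}\bigl(\bphi_{i+3/2} - \bphi_{i+1/2}\bigr),
\end{equation*}
and apply the elementary inequality $(a-b)^{\top}(a-b) \leq 2\,a^{\top}a + 2\,b^{\top}b$ (which follows from Cauchy--Schwarz, or equivalently from $-2a^{\top}b \leq a^{\top}a + b^{\top}b$). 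This yields, termwise,
\begin{equation*}
    \left(\Diffp\bphi_{i+1/2}\right)^{2} \leq \frac{2}{\deltax^{2}}\bigl(\bphi_{i+3/2}^{\top}\bphi_{i+3/2} + \bphi_{i+1/2}^{\top}\bphi_{i+1/2}\bigr).
\end{equation*}

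Next I would sum over $i = 0,\ldots,\Nx$ and split the right-hand side into two sums. Under the periodic or zero boundary conditions assumed throughout \Cref{section:EnergyStableMMM} (and also in the companion \Cref{lemma:SumbyParts}), the shifted sum satisfies $\sum_{i}\bphi_{i+3/2}^{\top}\bphi_{i+3/2} \leq \sum_{i}\bphi_{i+1/2}^{\top}\bphi_{i+1/2}$: in the periodic case the two sums are equal, and in the zero case the missing boundary term is non-negative. Combining the two contributions gives the factor of $4/\deltax^{2}$ and the claimed inequality.

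There is essentially no hard part; the only subtlety is being explicit about which boundary convention is in force so that the reindexing $\sum_i\bphi_{i+3/2}^{2}\leq \sum_i\bphi_{i+1/2}^{2}$ is legitimate, matching the hypothesis already used in \Cref{lemma:SumbyParts}. The constant $4$ is optimal for this argument and suffices for the energy estimate in the main proof, so no sharper inequality is needed.
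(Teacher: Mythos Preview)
Your proposal is correct and is essentially the same argument as the paper's: the paper expands the square, applies Young's inequality to the cross term $-2\bphi_{i+3/2}^{\top}\bphi_{i+1/2}$, and then reindexes, which is exactly your pointwise bound $(a-b)^{\top}(a-b)\le 2a^{\top}a+2b^{\top}b$ followed by the shift $\sum_i\bphi_{i+3/2}^{2}\le\sum_i\bphi_{i+1/2}^{2}$. Your remark about the boundary convention (periodic or zero, as in \Cref{lemma:SumbyParts}) is the right justification for that reindexing and is left implicit in the paper.
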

\begin{proof}
    Expanding the left-hand side using the definition of $\Diffp$
    \begin{align*}
        \sum_{i}\left( \Diffp\bphi_{i+1/2} \right)^{2} &= \frac{1}{\deltax^{2}}\sum_{i}(\bphi_{i+3/2} - \bphi_{i+1/2})^{2}\\
        &= \frac{2}{\deltax^{2}}\sum_{i}(\bphi_{i+1/2})^{2} -  \frac{2}{\deltax^{2}}\sum_{i} \bphi_{i+3/2}^{\top}\bphi_{i+1/2}.
    \end{align*}
   Using Young's inequality for the last term on the right-hand side in the above equation we get 
    \begin{equation}
        \left\lvert\frac{2}{\deltax^{2}}\sum_{i} \bphi_{i+3/2}^{\top}\bphi_{i+1/2}\right\rvert \leq\frac{1}{\deltax^{2}}\sum_{i}(\bphi_{i+1/2})^{2} + \frac{1}{\deltax^{2}}\sum_{i}(\bphi_{i+3/2})^{2}.
    \end{equation}
    A change in the index in the last term gives the desired result.
\end{proof}
The constructed macro-micro system \eqref{eq:LinMacMicsys} with the stabilization matrix $\absfluxMat$ is related to the full \PN system through the flux matrix $\FfluxMat = \left( \intgomgsb{\pk{i-1}\pk{j-1}\omg} \right)_{i,j=1}^{N+1}$ and Roe matrix $\absFfluxMat = \FTmat\absquadMat\FTmat^{\top}$, where $\FTmat = \left( \sqrt{\weight_{k}}\pk{i-1}(\quadpt_{k}) \right)_{i,j=1}^{N+1}$ such that $\FfluxMat = \FTmat\quadptMat\FTmat^{\top}$. We additionally define 
\begin{equation*}
    \frac{1}{\pknorm{0}}\bvec = \ba = (\ak{0},0,\ldots,0)^{\top}\in\R^{N}, \qquad \Fba = (0,\ak{0},0,\ldots,0)^{\top}\in\R^{N+1}.
\end{equation*}
\begin{lemma}[Lemma~3.4 \cite{einkemmer2022asymptoticpreserving} (\PN preservation)]\label{lemma:PnPreservation}
    For a given vector $\micvec\in\R^{N}$ define its extension $\bv \coloneqq (0,\mic_{1},\ldots,\mic_{N})^{\top}\in\R^{N+1}$ as well as $\hat{\bv}_{i+1/2} \coloneqq \FTmat^{\top}\bv_{i+1/2}\in\R^{N+1}$. Then,
    \begin{equation*}
        \micvec^{\top}\fluxMat^{2}\micvec = \hat{\bv}^{\top}\quadptMat^{2}\hat{\bv}, \quad \micvec^{\top}\absfluxMat\micvec = \hat{\bv}^{\top}\absquadMat\hat{\bv}, \quad \micvec^{\top}\ba\ba^{\top}\micvec = \hat{\bv}^{\top}\FTmat^{\top}\Fba\Fba^{\top}\FTmat\hat{\bv}.
    \end{equation*}
\end{lemma}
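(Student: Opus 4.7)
The strategy is to exploit two orthogonality facts together with the block decomposition induced by the splitting of the Legendre basis into $P_0$ (macroscopic) and $P_1,\ldots,P_N$ (microscopic). The first key observation is that the Gauss-Legendre quadrature with $N+1$ nodes is exact on polynomials of degree $\le 2N+1$, and therefore $\FTmat^{\top}\FTmat = \FTmat\FTmat^{\top} = I_{N+1}$. The second key observation is the block structure: writing the first row of $\FTmat$ (corresponding to $P_0$) as $\textbf{e}_0^{\top}\in\R^{1\times(N+1)}$, we have $\FTmat = \bigl(\begin{smallmatrix}\textbf{e}_0^{\top}\\ \Tmat\end{smallmatrix}\bigr)$, and hence $\FTmat^{\top} = (\textbf{e}_0\,|\,\Tmat^{\top})$. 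Since $\bv = (0,\micvec^{\top})^{\top}$ has vanishing first entry, this gives the basic reduction
\[
\hat{\bv} \;=\; \FTmat^{\top}\bv \;=\; \Tmat^{\top}\micvec,
\]
which I will use to transfer each RHS into the $\micvec$-formulation.

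With that, the second identity is immediate: substitute $\absfluxMat = \Tmat\absquadMat\Tmat^{\top}$ and group as $(\Tmat^{\top}\micvec)^{\top}\absquadMat(\Tmat^{\top}\micvec) = \hat{\bv}^{\top}\absquadMat\hat{\bv}$. For the third identity I would use orthogonality: $\FTmat\hat{\bv}=\FTmat\FTmat^{\top}\bv=\bv$, so $\Fba^{\top}\FTmat\hat{\bv}=\Fba^{\top}\bv=\ak{0}\mic_1=\ba^{\top}\micvec$; squaring gives $\hat{\bv}^{\top}\FTmat^{\top}\Fba\Fba^{\top}\FTmat\hat{\bv} = (\ba^{\top}\micvec)^2=\micvec^{\top}\ba\ba^{\top}\micvec$. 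These are short computations.

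The flux-squared identity is the main obstacle, because $\Tmat^{\top}\Tmat \neq I_{N+1}$ (it equals $I_{N+1}-\textbf{e}_0\textbf{e}_0^{\top}$), so the naive collapse $\fluxMat^2 = \Tmat\quadptMat\Tmat^{\top}\Tmat\quadptMat\Tmat^{\top}\to\Tmat\quadptMat^2\Tmat^{\top}$ is not available. I would handle this by embedding into the full $P_N$ flux matrix $\FfluxMat$: using the three-term recurrence $\omg P_k = \ak{k-1}P_{k-1}+\ak{k}P_{k+1}$, one verifies the block decomposition
\[
\FfluxMat \;=\; \begin{pmatrix} 0 & \ak{0}\textbf{e}_1^{\top}\\ \ak{0}\textbf{e}_1 & \fluxMat \end{pmatrix},
\]
so that $\FfluxMat\bv$ decomposes cleanly into its $P_0$-component $\ak{0}\mic_1=\ba^{\top}\micvec$ and its micro-component $\fluxMat\micvec$. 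Squaring and using $\FfluxMat = \FTmat\quadptMat\FTmat^{\top}$ with $\FTmat^{\top}\FTmat=I$ gives
\[
\hat{\bv}^{\top}\quadptMat^2\hat{\bv} \;=\; \bv^{\top}\FfluxMat^2\bv \;=\; \|\FfluxMat\bv\|^2 \;=\; \micvec^{\top}\ba\ba^{\top}\micvec \;+\; \micvec^{\top}\fluxMat^{2}\micvec,
\]
which, combined with the third identity already established, isolates $\micvec^{\top}\fluxMat^2\micvec$ in terms of $\hat{\bv}^{\top}\quadptMat^2\hat{\bv}$ and delivers the claim. The subtle step to get right is the bookkeeping of the $P_0$-row of $\FfluxMat$, since this is precisely the missing content that prevents the reduced flux $\fluxMat$ from being isospectral to $\quadptMat$ on the extended space.
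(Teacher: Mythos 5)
The paper does not actually prove this lemma: it is imported verbatim as Lemma~3.4 of \cite{einkemmer2022asymptoticpreserving}, so there is no in-paper proof to compare against. Your arguments for the second and third identities are correct and complete: the second is immediate from the definition $\absfluxMat=\Tmat\absquadMat\Tmat^{\top}$ once one observes $\hat{\bv}=\Tmat^{\top}\micvec$, and the third follows from the orthogonality $\FTmat\FTmat^{\top}=\textbf{I}_{N+1}$ (exactness of the $(N+1)$-point Gauss--Legendre rule on polynomials of degree $\le 2N+1$) exactly as you describe.

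The problem is the final sentence of your treatment of the first identity. Your block decomposition of $\FfluxMat$ and the resulting relation
\[
\hat{\bv}^{\top}\quadptMat^{2}\hat{\bv} \;=\; \micvec^{\top}\ba\ba^{\top}\micvec \;+\; \micvec^{\top}\fluxMat^{2}\micvec
\]
are correct, but this relation does not ``deliver the claim''; it contradicts it whenever $\ba^{\top}\micvec=\ak{0}\mick{1}\neq 0$. Equivalently, since $\Tmat^{\top}\Tmat=\textbf{I}_{N+1}-\textbf{t}_{0}\textbf{t}_{0}^{\top}$ with $\textbf{t}_{0}=(\sqrt{\weight_{k}/2})_{k}$, one gets $\micvec^{\top}\fluxMat^{2}\micvec=\hat{\bv}^{\top}\quadptMat^{2}\hat{\bv}-(\textbf{t}_{0}^{\top}\quadptMat\hat{\bv})^{2}$, and the subtracted term equals $(\ak{0}\mick{1})^{2}$. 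A concrete check: for $N=1$ one has $\fluxMat=\fluxMat^{2}=0$ by parity, while $\hat{\bv}^{\top}\quadptMat^{2}\hat{\bv}=\mick{1}^{2}/3=\micvec^{\top}\ba\ba^{\top}\micvec\neq 0$. So the first relation of the lemma holds only as the inequality $\micvec^{\top}\fluxMat^{2}\micvec\le\hat{\bv}^{\top}\quadptMat^{2}\hat{\bv}$, or as an equality after adding $\micvec^{\top}\ba\ba^{\top}\micvec$ to the left-hand side; your computation is in fact a correct proof of that corrected statement. Note that the inequality version is all the stability argument actually needs, since $\fluxMat^{2}$ enters the bound \eqref{eq:Thm1Eq20} in the proof of \Cref{theorem:LinMacMicEnStab} with the positive coefficient $\deltat\beta_{N}$ inside an upper bound. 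You should state the corrected identity (or the inequality) explicitly rather than asserting that your derivation implies the equality as written.
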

\noindent Two main properties of the advection operator, $\AdvecOp$, that are used in proving energy stability are
\begin{lemma}[Lemma~3.5 \cite{einkemmer2022asymptoticpreserving} (Positivity)]\label{lemma:Positivity}
    For a given discrete function $\micvec^{n}_{i+1/2} $, the advection operator fulfills the properties 
    \begin{equation*}
        \sum_{j}\micvec_{i+1/2}^{n+1,\top}\AdvecOp\micvec_{i+1/2}^{n+1} = \sum_{i}\frac{\deltax}{2}\Diffp\micvec_{i+1/2}^{n+1,\top}\absfluxMat\Diffp\micvec_{i+1/2}^{n+1}\geq 0
    \end{equation*}
    and 
    \begin{equation*}
    \begin{aligned}
        \sum_{j}\micvec_{i+1/2}^{n+1,\top}\AdvecOp\micvec_{i+1/2}^{n} &= \sum_{i}\frac{\deltax}{2}\Diffp\micvec_{i+1/2}^{n+1,\top}\absfluxMat\Diffp\micvec_{i+1/2}^{n+1}\\ &\quad + \sum_{i}(\micvec_{i+1/2}^{n} - \micvec_{i+1/2}^{n+1})^{\top}(\fluxMatp\Diffp + \fluxMatm\Diffm)\micvec_{i+1/2}^{n+1}.
    \end{aligned}
    \end{equation*}
\end{lemma}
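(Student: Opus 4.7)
The plan is to establish both identities by direct manipulation of sums, relying on two ingredients: the summation-by-parts identity of \Cref{lemma:SumbyParts}, and the symmetry of the matrices $\fluxMat$, $\absfluxMat$, and $\fluxMatpm = \tfrac{1}{2}(\fluxMat \pm \absfluxMat)$ (which is guaranteed because $\quadptMat$ and $\absquadMat$ are diagonal, so these matrices are of the form $\Tmat\,(\text{diag})\,\Tmat^{\top}$). All boundary terms coming from summation by parts vanish under the assumed zero or periodic boundary data.

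For the first identity, I would decompose the upwind advection operator into a central-difference part and a numerical-dissipation part,
\[
\AdvecOp\bphi_i \;=\; \tfrac{1}{2}\fluxMat(\Diffp + \Diffm)\bphi_i \;-\; \tfrac{1}{2}\absfluxMat(\Diffp - \Diffm)\bphi_i.
\]
For the central part, summation by parts applied to $\sum_i \bphi_i^{\top}\fluxMat\,\Diffp\bphi_i$ together with symmetry of $\fluxMat$ gives $-\sum_i \bphi_i^{\top}\fluxMat\,\Diffm\bphi_i$, so $\sum_i \bphi_i^{\top}\fluxMat(\Diffp + \Diffm)\bphi_i = 0$. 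For the dissipative part, I would expand the telescoping sum $\sum_i(\Diffp\bphi_i)^{\top}\absfluxMat(\Diffp\bphi_i) = \deltax^{-2}\sum_i(\bphi_{i+1}-\bphi_i)^{\top}\absfluxMat(\bphi_{i+1}-\bphi_i)$, shift an index, and use symmetry of $\absfluxMat$ to relate it to $\sum_i \bphi_i^{\top}\absfluxMat(\Diffp-\Diffm)\bphi_i$, producing the stated factor of $\deltax/2$. Nonnegativity is immediate since $\absfluxMat = \Tmat\absquadMat\Tmat^{\top}$ is symmetric positive semidefinite.

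For the second identity, the natural strategy is to write $\micvec_{i+1/2}^{n} = \micvec_{i+1/2}^{n+1} + (\micvec_{i+1/2}^{n} - \micvec_{i+1/2}^{n+1})$ and exploit linearity. The piece $\sum_i \micvec_{i+1/2}^{n+1,\top}\AdvecOp\micvec_{i+1/2}^{n+1}$ equals $\tfrac{\deltax}{2}\sum_i(\Diffp\micvec_{i+1/2}^{n+1})^{\top}\absfluxMat(\Diffp\micvec_{i+1/2}^{n+1})$ by the first identity already proven. For the remaining cross term involving $(\micvec_{i+1/2}^{n} - \micvec_{i+1/2}^{n+1})$, I would apply summation by parts to each of the two pieces $\fluxMatp\Diffm$ and $\fluxMatm\Diffp$ making up $\AdvecOp$: setting $\bzeta_i = \fluxMatpm\micvec_{i+1/2}^{n+1}$ and using symmetry of $\fluxMatpm$ transfers the difference operators onto $\micvec^{n+1}$, converting $\Diffm \leftrightarrow -\Diffp$. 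The result is that $\fluxMatp\Diffm + \fluxMatm\Diffp$ effectively becomes $\fluxMatp\Diffp + \fluxMatm\Diffm$ (up to the usual overall sign produced by summation by parts), yielding the second summand in the lemma.

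The main obstacle is simply careful bookkeeping of indices and signs: each summation-by-parts step flips a sign and swaps $\Diffp\leftrightarrow\Diffm$, and one must use symmetry of the appropriate matrix at the appropriate moment. No deeper structural difficulty is expected, and the argument closely parallels standard stability proofs for upwind finite-volume schemes, adapted here to the matrix-valued upwinding through $\fluxMatpm$.
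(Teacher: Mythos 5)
The paper does not actually prove this lemma --- it is imported verbatim as Lemma~3.5 of \cite{einkemmer2022asymptoticpreserving} --- so there is no in-paper argument to compare against; judged on its own, your plan is the standard and correct one, and it is essentially the argument of the cited reference. The decomposition $\AdvecOp = \tfrac12\fluxMat(\Diffp+\Diffm) - \tfrac12\absfluxMat(\Diffp-\Diffm)$ is exactly right given $\fluxMatpm = \tfrac12\Tmat(\quadptMat\pm\absquadMat)\Tmat^{\top}$; the central part vanishes by \Cref{lemma:SumbyParts} and symmetry of $\fluxMat$; the dissipation part telescopes to $\tfrac{\deltax}{2}\sum_i(\Diffp\micvec^{n+1}_{i+1/2})^{\top}\absfluxMat\,\Diffp\micvec^{n+1}_{i+1/2}\ge 0$ since $\absfluxMat=\Tmat\absquadMat\Tmat^{\top}$ is positive semidefinite; and splitting $\micvec^{n}=\micvec^{n+1}+(\micvec^{n}-\micvec^{n+1})$ reduces the second identity to the first plus a cross term handled by one more summation by parts.

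The one place where your ``careful bookkeeping of indices and signs'' cannot be waved away is the sign of that cross term. Carrying out your own step --- SBP with $\bzeta_{i+1/2}=\fluxMatpm\micvec^{n+1}_{i+1/2}$ and symmetry of $\fluxMatpm$ --- gives
\begin{equation*}
\sum_{i}\micvec_{i+1/2}^{n+1,\top}\AdvecOp\bigl(\micvec_{i+1/2}^{n}-\micvec_{i+1/2}^{n+1}\bigr)
= -\sum_{i}\bigl(\micvec_{i+1/2}^{n}-\micvec_{i+1/2}^{n+1}\bigr)^{\top}\bigl(\fluxMatp\Diffp+\fluxMatm\Diffm\bigr)\micvec_{i+1/2}^{n+1},
\end{equation*}
i.e.\ a \emph{minus} sign, whereas the lemma as printed has a plus. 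A scalar upwind sanity check ($\fluxMat=a>0$, so $\AdvecOp=a\Diffm$) confirms the minus sign, and the paper's own use of the lemma in \eqref{eq:Thm1Eq15} (the expression for $Q$) also carries the minus sign, so the statement of the lemma contains a sign typo rather than your derivation being wrong. You should state the sign explicitly rather than leave it as ``up to the usual overall sign'': downstream it is harmless only because the stability proof bounds $\abs{Q}$ via Young's inequality, but a proof of the identity itself must commit to the correct sign.
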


\begin{lemma}[Lemma~3.6 \cite{einkemmer2022asymptoticpreserving} (Boundedness)]\label{lemma:boundedness}
     For a given discrete function $\micvec^{n}_{i+1/2}$, the advection operator fulfills the property 
     \begin{equation*}
        \sum_{i}\left[ (\fluxMatp\Diffp + \fluxMatm\Diffm)\micvec^{n+1}_{i+1/2} \right]^{2} \leq 2\beta_{N}\sum_{i}\Diffp\micvec^{n+1,\top}_{i+1/2}\fluxMat^{2}\Diffp\micvec^{n+1}_{i+1/2},
     \end{equation*}
     where $\beta_{N} = \underset{k}{\mathrm{max}}\hspace{2mm}\weight_{k}(N+1)$ is bounded for all $N$.
\end{lemma}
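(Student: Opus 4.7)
The plan is to split the combined upwind operator by Young's inequality, shift indices so that both resulting sums involve only $\Diffp$, and then reduce the whole estimate to a single cell-local matrix inequality comparing $\fluxMatp^2+\fluxMatm^2$ with $\fluxMat^2$. The key non-obvious work lies entirely in that final pointwise bound, for which one uses the quadrature factorization $\fluxMatpm=\tfrac12\Tmat(\quadptMat\pm|\quadptMat|)\Tmat^\top$.

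First I would apply the elementary inequality $\|\mathbf{u}+\mathbf{v}\|^2\le 2\|\mathbf{u}\|^2+2\|\mathbf{v}\|^2$ inside each cell to obtain
\begin{equation*}
\sum_i \bigl\|(\fluxMatp\Diffp+\fluxMatm\Diffm)\micvec^{n+1}_{i+1/2}\bigr\|^2
\le 2\sum_i\bigl\|\fluxMatp\Diffp\micvec^{n+1}_{i+1/2}\bigr\|^2+2\sum_i\bigl\|\fluxMatm\Diffm\micvec^{n+1}_{i+1/2}\bigr\|^2.
\end{equation*}
Because $\Diffm\micvec_{i+1/2}=\Diffp\micvec_{i-1/2}$, an index shift using the periodic/zero boundary condition (same setup as in Lemma~\ref{lemma:SumbyParts}) converts the second sum into $\sum_i\|\fluxMatm\Diffp\micvec^{n+1}_{i+1/2}\|^2$. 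The claim then reduces to the single \emph{pointwise} inequality
\begin{equation*}
\|\fluxMatp\mathbf{z}\|^2+\|\fluxMatm\mathbf{z}\|^2\le \beta_N\,\mathbf{z}^{\top}\fluxMat^{2}\mathbf{z}
\qquad\text{for every }\mathbf{z}\in\R^{N},
\end{equation*}
applied with $\mathbf{z}=\Diffp\micvec^{n+1}_{i+1/2}$ and summed in $i$.

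Second, to prove the pointwise inequality I would expand $\fluxMatp$ via the Gauss--Legendre quadrature: writing $\tilde z(\omg)=\sum_{j=1}^{N}z_j\pk{j}(\omg)$, one has $(\fluxMatp\mathbf{z})_i=\sum_{k:\,\quadpt_k>0}\weight_k\quadpt_k\pk{i}(\quadpt_k)\tilde z(\quadpt_k)$, and analogously for $\fluxMatm$. A weighted Cauchy--Schwarz estimate of the form
\begin{equation*}
|(\fluxMatp\mathbf{z})_i|^{2}\le \Bigl(\sum_{k:\,\quadpt_k>0}\weight_k\pk{i}(\quadpt_k)^{2}\Bigr)\Bigl(\sum_{k:\,\quadpt_k>0}\weight_k\quadpt_k^{2}\tilde z(\quadpt_k)^{2}\Bigr),
\end{equation*}
summed over $i=1,\dots,N$, together with the Christoffel--Darboux identity $\sum_{i=1}^{N}\pk{i}(\quadpt_k)^{2}=\tfrac{1}{\weight_k}-\tfrac12$ (a consequence of the exactness of $(N+1)$-point Gauss--Legendre quadrature on polynomials of degree $\le 2N+1$) collapses the inner double sum to $|S^{\pm}|-\tfrac12\sum_{k\in S^{\pm}}\weight_k\le N+1$. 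Combining the two half-sums and using the $\mathrm{P}_N$-preservation identification from Lemma~\ref{lemma:PnPreservation} to write $\mathbf{z}^{\top}\fluxMat^{2}\mathbf{z}=\hat{\bv}^{\top}\quadptMat^{2}\hat{\bv}=\sum_k \weight_k\quadpt_k^{2}\tilde z(\quadpt_k)^{2}$ (modulo the $\pk{0}$ correction), and then bounding $\weight_k\le \max_k\weight_k$ once in the final collapsed factor, produces the constant $\beta_N=(N+1)\max_k\weight_k$ on the right-hand side, followed by an application of Lemma~\ref{lemma:boundforDp}-style absorption for the cross term if needed.

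The main obstacle will be the last step. Although $\fluxMatp+\fluxMatm=\fluxMat$, one does not have $\fluxMatp^{2}+\fluxMatm^{2}=\fluxMat^{2}$ because the cross product $\fluxMatp\fluxMatm$ is nonzero: the middle factor $\Tmat^{\top}\Tmat$ is only the projection $\textbf{I}-\Tmat_{0}\Tmat_{0}^{\top}$ onto the orthogonal complement of the $\pk{0}$-mode in $\R^{N+1}$, rather than the identity, so $\mathbf{M}^{+}$ and $\mathbf{M}^{-}$ are mixed by it. The constant $\beta_N$ must precisely absorb this leak, and its sharp form $(N+1)\max_k\weight_k$ (rather than a looser bound) relies on using the exact Christoffel--Darboux sum $\sum_{i=1}^{N}\pk{i}(\quadpt_k)^{2}=1/\weight_k-1/2$ instead of any crude $\ell^{\infty}$ estimate on Legendre polynomials at the quadrature nodes. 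Bounding $\beta_N$ uniformly in $N$ then follows from the classical Gauss--Legendre weight asymptotics $\weight_k\sim\pi\sqrt{1-\quadpt_k^{2}}/(N+1)$.
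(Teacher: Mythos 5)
This lemma is quoted in the paper directly from \cite{einkemmer2022asymptoticpreserving} without an internal proof, so I can only assess your argument on its own terms. Your skeleton is sound: the elementary bound $\norm{\mathbf{u}+\mathbf{v}}^{2}\le 2\norm{\mathbf{u}}^{2}+2\norm{\mathbf{v}}^{2}$, the index shift $\Diffm\micvec_{i+1/2}=\Diffp\micvec_{i-1/2}$ under periodic/zero boundaries, and the reduction to a single pointwise quadratic-form inequality in $\mathbf{z}=\Diffp\micvec^{n+1}_{i+1/2}$ are all correct, and the factorization $\fluxMatpm=\Tmat\quadptMat^{\pm}\Tmat^{\top}$ with $\quadptMat^{\pm}=\tfrac12(\quadptMat\pm\absquadMat)$ is indeed the key structure. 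The gap is in the constant. Your Cauchy--Schwarz step gives $\norm{\fluxMatp\mathbf{z}}^{2}\le\bigl(\sum_{k\in S^{+}}\weight_{k}\sum_{i=1}^{N}\pk{i}(\quadpt_{k})^{2}\bigr)\sum_{k\in S^{+}}\weight_{k}\quadpt_{k}^{2}\tilde z(\quadpt_{k})^{2}$, and after the identity $\sum_{i=1}^{N}\pk{i}(\quadpt_{k})^{2}=1/\weight_{k}-1/2$ the first factor equals $|S^{+}|-\tfrac12\sum_{k\in S^{+}}\weight_{k}$, a pure number of order $N/2$ with no remaining weight left to estimate by $\max_{k}\weight_{k}$. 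So your argument yields the inequality with constant $2(N+1)$ rather than $2\beta_{N}=2(N+1)\max_{k}\weight_{k}=\mathcal{O}(1)$; the step ``bounding $\weight_{k}\le\max_{k}\weight_{k}$ once in the final collapsed factor'' has nothing to act on, and an $N$-growing constant would destroy exactly the $N$-uniformity of the CFL condition that this lemma exists to provide. (The appeal to a ``Lemma~\ref{lemma:boundforDp}-style absorption'' for the cross term is also a non sequitur: that lemma concerns spatial difference quotients, not the angular projection defect.)

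The repair is simpler than what you attempt. Since the $(N+1)$-point Gauss--Legendre rule is exact on polynomials of degree $\le 2N$, the rows of $\Tmat$ are orthonormal, i.e.\ $\Tmat\Tmat^{\top}=\textbf{I}$, so $\Tmat^{\top}\Tmat$ is an orthogonal projection and $\norm{\Tmat\mathbf{y}}\le\norm{\mathbf{y}}$ for all $\mathbf{y}\in\R^{N+1}$. Hence, with $\hat{\bv}=\Tmat^{\top}\mathbf{z}$,
\begin{equation*}
\norm{\fluxMatp\mathbf{z}}^{2}+\norm{\fluxMatm\mathbf{z}}^{2}
\le\norm{\quadptMat^{+}\hat{\bv}}^{2}+\norm{\quadptMat^{-}\hat{\bv}}^{2}
=\hat{\bv}^{\top}\quadptMat^{2}\hat{\bv},
\end{equation*}
because for each $k$ exactly one of $\quadpt_{k}^{\pm}$ is nonzero and $(\quadpt_{k}^{+})^{2}+(\quadpt_{k}^{-})^{2}=\quadpt_{k}^{2}$. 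Combining this with \Cref{lemma:PnPreservation} and the factor $2$ from your first step proves the claim with constant $2$, and $2\le 2\beta_{N}$ since $\beta_{N}=(N+1)\max_{k}\weight_{k}\ge\sum_{k}\weight_{k}=2$. Your closing observation that $\fluxMatp\fluxMatm\neq\textbf{0}$ because $\Tmat^{\top}\Tmat$ is only a projection is correct, but it is precisely the projection property --- an inequality in the favourable direction --- that closes the argument, not a sharp Christoffel--Darboux identity.
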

\noindent With these lemma we now present the proof of \cref{theorem:LinMacMicEnStab}:

\begin{proof}
    \noindent We start by plugging in the update equation of the macro variable, $\Temp$, \eqref{eq:LinMacMicsysmac} into the update equation of the mesoscopic variable, $\meso$, \eqref{eq:LinMacMicsysmeso}. This gives
    \begin{subequations}\label{eq:Thm1Eq1}
        \begin{equation}\label{eq:Thm1Eq1.1}
            \frac{(\StefBoltzconst\Temp_{i}^{n+1} + \frac{\epsi^{2}}{\sol}\meso_{i}^{n+1}) - (\StefBoltzconst\Temp_{i}^{n} + \frac{\epsi^{2}}{\sol}\meso_{i}^{n})}{\deltat} + \frac{\pknorm{1}}{2}\DiffO\mic_{1,i}^{n+1} = -\absorpcoeff_{i}\meso_{i}^{n+1},
        \end{equation}
        \begin{equation}\label{eq:Thm1Eq1.2}
            \frac{1}{c}\left( \frac{\micvec_{i+1/2}^{n+1} - \micvec_{i+1/2}^{n}}{\deltat} \right) + \frac{1}{\epsi}\AdvecOp\micvec_{i+1/2}^{n} = -\frac{\absorpcoeff_{i+1/2}}{\epsi^{2}}\micvec_{i+1/2}^{n+1} - \frac{1}{\epsi^{2}}\bvec\hspace{0.5mm}\deltaO(\StefBoltzconst\sol\Temp^{n}_{i+1/2} + \epsi^{2}\meso^{n}_{i+1/2}).
        \end{equation}
    \end{subequations}
    Next we multiply the update equation for the micro equation \eqref{eq:Thm1Eq1.1} by $ (\StefBoltzconst\Temp_{i}^{n+1} + \frac{\epsi^{2}}{\sol}\meso_{i}^{n+1})\deltax $, and sum over $i$,
    \begin{equation*}
        \begin{aligned}
            \frac{1}{\deltat}\sum_{i}(\StefBoltzconst\Temp_{i}^{n+1} + \frac{\epsi^{2}}{\sol}\meso_{i}^{n+1})^{2}\deltax - \frac{1}{\deltat}\sum_{i}(\StefBoltzconst\Temp_{i}^{n+1} &+ \frac{\epsi^{2}}{\sol}\meso_{i}^{n+1})(\StefBoltzconst\Temp_{i}^{n} + \frac{\epsi^{2}}{\sol}\meso_{i}^{n})\deltax\\
            \qquad + \frac{\pknorm{1}}{2}\sum_{i}(\StefBoltzconst\Temp_{i}^{n+1} + \frac{\epsi^{2}}{\sol}\meso_{i}^{n+1})\DiffO\mic_{1,i}^{n+1}\deltax &= -\sum_{i}(\StefBoltzconst\Temp_{i}^{n+1} + \frac{\epsi^{2}}{\sol}\meso_{i}^{n+1})\absorpcoeff_{i}\meso_{i}^{n+1}\deltax.
        \end{aligned}
    \end{equation*}
    Using the summation of products \cref{lemma:sumofsquares} we get
    \begin{equation}\label{eq:Thm1Eq2}
        \begin{aligned}
            \frac{1}{2\deltat}\left( \norm{\StefBoltzconst\Temp^{n+1} + \frac{\epsi^{2}}{\sol}\meso^{n+1}}^{2} - \norm{\StefBoltzconst\Temp^{n} + \frac{\epsi^{2}}{\sol}\meso^{n}}^{2} + \norm{\StefBoltzconst\Temp^{n+1} + \frac{\epsi^{2}}{\sol}\meso^{n+1} - \StefBoltzconst\Temp^{n} - \frac{\epsi^{2}}{\sol}\meso^{n}}^{2} \right) +\\
            \frac{\pknorm{1}}{2}\sum_{i}(\StefBoltzconst\Temp_{i}^{n+1} + \frac{\epsi^{2}}{\sol}\meso_{i}^{n+1})\DiffO\mic_{1,i}^{n+1}\deltax = -\sum_{i}(\StefBoltzconst\Temp_{i}^{n+1} + \frac{\epsi^{2}}{\sol}\meso_{i}^{n+1})\absorpcoeff_{i}\meso_{i}^{n+1}\deltax
        \end{aligned}
    \end{equation}
    Multiply \eqref{eq:Thm1Eq1.2} by $ \micvec^{n+1,\top}_{i+1/2}\deltax $, and sum over $i$, and using the summation \cref{lemma:sumofsquares}
    \begin{equation}\label{eq:Thm1Eq3}
        \begin{aligned}
            \frac{1}{2\deltat}\left(\frac{1}{c}\norm{\micmat^{n+1}}^{2}  - \frac{1}{c}\norm{\micmat^{n}}^{2}  + \frac{1}{c}\norm{\micmat^{n+1} - \micmat^{n}}^{2} \right) + \frac{1}{\epsi}\sum_{i}\micvec^{n+1,\top}_{i+1/2}\AdvecOp\micvec^{n}_{i+1/2}\deltax \\
            \hspace{2cm}\leq - \frac{\absorpcoefflb}{\epsi^{2}}\norm{\micmat^{n+1}}^{2} - \frac{\pknorm{1}}{\epsi^{2}}\sum_{i}\mic^{n+1}_{1,i+1/2}\hspace{0.5mm}\deltaO(\StefBoltzconst\sol\Temp^{n}_{i+1/2} + \epsi^{2}\meso^{n}_{i+1/2})\deltax,
        \end{aligned}
    \end{equation}
    where we use $\absorpcoefflb \leq \absorpcoeff_{i+1/2}, \forall i$, and $\micvec^{n+1,\top}_{i+1/2}\bvec = \pknorm{1}\mic^{n+1}_{1,i+1/2} $. \\
    Then \eqref{eq:Thm1Eq2} $ + \frac{\epsi^{2}}{\pknorm{0}^{2}\sol}\times$ \eqref{eq:Thm1Eq3}, where $\pknorm{0}^{2} = 2$ is the normalization factor of the zeroth order Legendre polynomial, gives
    \begin{equation}\label{eq:Thm1Eq4}
        \begin{aligned}
            \frac{1}{2\deltat}&\left( \norm{\StefBoltzconst\Temp^{n+1} + \frac{\epsi^{2}}{\sol}\meso^{n+1}}^{2} + \norm{\frac{\epsi}{\pknorm{0}c}\micmat^{n+1}}^{2} - \norm{\StefBoltzconst\Temp^{n} + \frac{\epsi^{2}}{\sol}\meso^{n}}^{2}  - \norm{\frac{\epsi}{\pknorm{0}c}\micmat^{n}}^{2}  \right.\\
            &\left. + \norm{\StefBoltzconst\Temp^{n+1} + \frac{\epsi^{2}}{\sol}\meso^{n+1} - \StefBoltzconst\Temp^{n} - \frac{\epsi^{2}}{\sol}\meso^{n}}^{2} + \norm{\frac{\epsi}{\pknorm{0}c}\micmat^{n+1} - \frac{\epsi}{\pknorm{0}c}\micmat^{n}}^{2} \right) \\
            &+ \frac{\pknorm{1}}{2}\sum_{i}(\StefBoltzconst\Temp_{i}^{n+1} + \frac{\epsi^{2}}{\sol}\meso_{i}^{n+1})\DiffO\mic_{1,i}^{n+1}\deltax 
            + \frac{\epsi}{2\sol}\sum_{i}\micvec^{n+1,\top}_{i+1/2}\AdvecOp\micvec^{n}_{i+1/2}\deltax \\ &\leq -\sum_{i}(\StefBoltzconst\Temp_{i}^{n+1} + \frac{\epsi^{2}}{\sol}\meso_{i}^{n+1})\absorpcoeff_{i}\meso_{i}^{n+1}\deltax 
            - \frac{\absorpcoefflb}{2\sol}\norm{\micmat^{n+1}}^{2}- \frac{\pknorm{1}}{\pknorm{0}^{2}}\sum_{i}\mic^{n+1}_{1,i+1/2}\hspace{0.5mm}\deltaO(\StefBoltzconst\Temp^{n}_{i+1/2} + \frac{\epsi^{2}}{\sol}\meso^{n}_{i+1/2})\deltax \,.
        \end{aligned}
    \end{equation}
    \noindent Using discrete integration by parts from \Cref{lemma:SumbyParts} we get
    \begin{equation}\label{eq:Thm1Eq5}
        \sum_{i}\mic^{n+1}_{1,i+1/2}\hspace{0.5mm}\deltaO(\StefBoltzconst\Temp^{n}_{i+1/2} + \frac{\epsi^{2}}{\sol}\meso^{n}_{i+1/2})\deltax = -\sum_{i}\DiffO\mic^{n+1}_{1,i}(\StefBoltzconst\Temp_{i}^{n} + \frac{\epsi^{2}}{\sol}\meso^{n}_{i})\deltax.
    \end{equation}
    With the use of \eqref{eq:Thm1Eq5} we rewrite \eqref{eq:Thm1Eq4} as
    \begin{equation*}\label{eq:Thm1Eq6}
        \begin{aligned}
             \frac{1}{2\deltat}\left( \norm{\StefBoltzconst\Temp^{n+1} + \frac{\epsi^{2}}{\sol}\meso^{n+1}}^{2} + \norm{\frac{\epsi}{\pknorm{0}c}\micmat^{n+1}}^{2} - \norm{\StefBoltzconst\Temp^{n} + \frac{\epsi^{2}}{\sol}\meso^{n}}^{2}  - \norm{\frac{\epsi}{\pknorm{0}c}\micmat^{n}}^{2}  \right.&\\
            \left. + \norm{\StefBoltzconst\Temp^{n+1} + \frac{\epsi^{2}}{\sol}\meso^{n+1} - \StefBoltzconst\Temp^{n} - \frac{\epsi^{2}}{\sol}\meso^{n}}^{2} + \norm{\frac{\epsi}{\pknorm{0}c}\micmat^{n+1} - \frac{\epsi}{\pknorm{0}c}\micmat^{n}}^{2} \right)& \\
            + \frac{\epsi}{2\sol}\sum_{i}\micvec^{n+1,\top}_{i+1/2}\AdvecOp\micvec^{n}_{i+1/2}\deltax \leq  - \frac{\absorpcoefflb}{2\sol}\norm{\micmat^{n+1}}^{2} -\sum_{i}(\StefBoltzconst\Temp_{i}^{n+1} + \frac{\epsi^{2}}{\sol}\meso_{i}^{n+1})&\absorpcoeff_{i}\meso_{i}^{n+1}\deltax \\
            +\frac{\pknorm{1}}{2}\sum_{i}\DiffO\mic^{n+1}_{1,i}(-\StefBoltzconst\Temp^{n+1}_{i} - \frac{\epsi^{2}}{\sol}\meso^{n+1}_{i} +& \StefBoltzconst\Temp^{n}_{i} + \frac{\epsi^{2}}{\sol}\meso^{n}_{i})\deltax.
        \end{aligned}
    \end{equation*}
    Using Young's inequality,
    \begin{equation*}\label{eq:Thm1Eq7}
    \begin{aligned}
        \frac{\pknorm{1}}{2}\sum_{i}\DiffO\mic^{n+1}_{1,i}(-\StefBoltzconst\Temp^{n+1}_{i} - \frac{\epsi^{2}}{\sol}\meso^{n+1}_{i} + \StefBoltzconst\Temp^{n}_{i} + \frac{\epsi^{2}}{\sol}\meso^{n}_{i})\deltax &\leq \alpha\norm{-\StefBoltzconst\Temp^{n+1} - \frac{\epsi^{2}}{\sol}\meso^{n+1} + \StefBoltzconst\Temp^{n} + \frac{\epsi^{2}}{\sol}\meso^{n}}\\
        &\quad + \frac{1}{4\alpha}\sum_{i}\frac{\pknorm{1}^{2}}{4}(\DiffO\mic^{n+1}_{1,i})^{2}\deltax.
    \end{aligned}
    \end{equation*}
    Thus, setting $\alpha = \frac{1}{2\deltat}$ we have
    \begin{equation}\label{eq:Thm1Eq8}
        \begin{aligned}
            \frac{1}{2\deltat}&\left( \norm{\StefBoltzconst\Temp^{n+1} + \frac{\epsi^{2}}{\sol}\meso^{n+1}}^{2} + \norm{\frac{\epsi}{\pknorm{0}c}\micmat^{n+1}}^{2} - \norm{\StefBoltzconst\Temp^{n} + \frac{\epsi^{2}}{\sol}\meso^{n}}^{2}  - \norm{\frac{\epsi}{\pknorm{0}c}\micmat^{n}}^{2}  \right.\\
            &\left.+ \norm{\frac{\epsi}{\pknorm{0}c}\micmat^{n+1} - \frac{\epsi}{\pknorm{0}c}\micmat^{n}}^{2} \right)
            + \frac{\epsi}{2\sol}\sum_{i}\micvec^{n+1,\top}_{i+1/2}\AdvecOp\micvec^{n}_{i+1/2}\deltax\\&\leq  - \frac{\absorpcoefflb}{2\sol}\norm{\micmat^{n+1}}^{2} -\sum_{i}(\StefBoltzconst\Temp_{i}^{n+1} + \frac{\epsi^{2}}{\sol}\meso_{i}^{n+1})\absorpcoeff_{i}\meso_{i}^{n+1}\deltax 
           +\frac{\deltat}{2}\frac{\pknorm{1}^{2}}{4}\sum_{i}(\DiffO\mic^{n+1}_{1,i})^{2}\deltax.\qquad \qquad
        \end{aligned}
    \end{equation}\label{eq:Thm1Eq9}
    This gives an upper bound for the term $\norm{\StefBoltzconst\Temp^{n+1} + \frac{\epsi^{2}}{\sol}\meso^{n+1}}^{2} + \norm{\frac{\epsi}{\pknorm{0}\sol}\micmat^{n+1}}$. Next, we derive an upper bound for $ \norm{\sqrt{\frac{\StefBoltzconst}{\facalph}}\Temp^{n+1}}^{2}$. For that we multiply the temperature update \eqref{eq:LinMacMicsysmac} by $\StefBoltzconst\Temp^{n+1}_{i}\deltax $ and sum over $i$
    \begin{equation*}
        \frac{1}{\deltat}\sum_{i}\StefBoltzconst\Temp^{n+1}_{i}(\Temp^{n+1}_{i} - \Temp^{n}_{i})\deltax = \facalph\sum_{i}\StefBoltzconst\Temp^{n+1}_{i}\absorpcoeff_{i}\meso^{n+1}_{i}\deltax.
    \end{equation*}
    Thus, we get using the summation of products \cref{lemma:sumofsquares}
    \begin{equation}\label{eq:Thm1Eq10}
        \frac{1}{2\deltat}\left( \norm{\sqrt{\frac{\StefBoltzconst}{\facalph}}\Temp^{n+1}}^{2} - \norm{\sqrt{\frac{\StefBoltzconst}{\facalph}}\Temp^{n}}^{2} + \norm{\sqrt{\frac{\StefBoltzconst}{\facalph}}\Temp^{n+1} - \sqrt{\frac{\StefBoltzconst}{\facalph}}\Temp^{n}}^{2} \right) = \sum_{i}\StefBoltzconst\Temp^{n+1}_{i}\absorpcoeff_{i}\meso^{n+1}_{i}\deltax.
    \end{equation}
    Adding \eqref{eq:Thm1Eq8} and \eqref{eq:Thm1Eq10} gives
    \begin{equation}\label{eq:Thm1Eq11}
        \begin{aligned}
            \frac{1}{2\deltat}&\left( \energy^{n+1} - \energy^{n}+ \norm{\frac{\epsi}{\pknorm{0}c}\micmat^{n+1} - \frac{\epsi}{\pknorm{0}c}\micmat^{n}}^{2}\right) + \frac{\epsi}{2\sol}\sum_{i}\micvec^{n+1,\top}_{i+1/2}\AdvecOp\micvec^{n}_{i+1/2}\deltax\\ &\leq  - \frac{\absorpcoefflb}{2\sol}\norm{\micmat^{n+1}}^{2} - \frac{\absorpcoefflb}{\sol}\norm{\epsi\meso^{n+1}}^{2}
            +\frac{\deltat}{2}\frac{\pknorm{1}^{2}}{4}\sum_{i}
           (\DiffO\mic^{n+1}_{1,i})^{2}\deltax - \norm{\sqrt{\frac{\StefBoltzconst}{\facalph}}\Temp^{n+1} - \sqrt{\frac{\StefBoltzconst}{\facalph}}\Temp^{n}}^{2}.
        \end{aligned}
    \end{equation}
    Since $- \frac{\absorpcoefflb}{\sol}\norm{\epsi\meso^{n+1}}^{2} \leq 0$ and $- \norm{\sqrt{\frac{\StefBoltzconst}{\facalph}}\Temp^{n+1} - \sqrt{\frac{\StefBoltzconst}{\facalph}}\Temp^{n}}^{2} \leq 0$ we bound them from above by $0$. 

    Then \eqref{eq:Thm1Eq11} becomes
        \begin{equation}\label{eq:Thm1Eq13}
        \begin{aligned}
            \frac{1}{2\deltat}\left( \energy^{n+1} -\energy^{n} + \norm{\frac{\epsi}{\pknorm{0}c}\micmat^{n+1} - \frac{\epsi}{\pknorm{0}c}\micmat^{n}}^{2} \right)
            + \frac{\epsi}{2\sol}\sum_{i}\micvec^{n+1,\top}_{i+1/2}\AdvecOp\micvec^{n}_{i+1/2}\deltax\\ \leq - \frac{\absorpcoefflb}{2\sol}\norm{\micmat^{n+1}}^{2}
           +\frac{\deltat}{2}\frac{\pknorm{1}^{2}}{4}&\sum_{i}
           (\DiffO\mic^{n+1}_{1,i})^{2}\deltax.\hspace{2cm}
        \end{aligned}
    \end{equation}
    Next, we split the last term on the left-hand side of \eqref{eq:Thm1Eq13} as
    \begin{equation*}\label{eq:Thm1Eq14}
        \sum_{i}\micvec^{n+1}_{i+1/2}\AdvecOp\micvec^{n}_{i+1/2}\deltax = P + Q
    \end{equation*}
    where
    \begin{equation*}
        P = \sum_{i}\micvec^{n+1}_{i+1/2}\AdvecOp\micvec^{n+1}_{i+1/2}\deltax, \quad Q = \sum_{i}\micvec^{n+1}_{i+1/2}\AdvecOp(\micvec^{n}_{i+1/2}-\micvec^{n+1}_{i+1/2})\deltax.
    \end{equation*}
     Then, using \Cref{lemma:Positivity} we have,
     \begin{equation}\label{eq:Thm1Eq15}
         \begin{aligned}
             P &= \frac{\deltax}{2}\sum_{i}\Diffp\micvec^{n+1,\top}_{i+1/2}\absfluxMat\Diffp\micvec^{n+1}_{i+1/2}\deltax,\\ Q &= -\sum_{i}(\fluxMatp\Diffp + \fluxMatm\Diffm)\micvec^{n+1,\top}_{i+1/2}(\micvec^{n}_{i+1/2}-\micvec^{n+1}_{i+1/2}).
         \end{aligned}
     \end{equation}
     Thus, using Young's inequality
     \begin{equation*}\label{eq:Thm1Eq16}
         \abs{Q} \leq \alpha\norm{\micmat^{n+1} - \micmat^{n}} + \frac{1}{4\alpha}\sum_{i}\left[(\fluxMatp\Diffp + \fluxMatm\Diffm)\micvec^{n+1}_{i+1/2}  \right]^{2}\deltax
     \end{equation*}
     and \Cref{lemma:Positivity} we get
     \begin{equation}\label{eq:Thm1Eq17}
          \abs{Q} \leq \alpha\norm{\micmat^{n+1} - \micmat^{n}} + \frac{2\beta_{N}}{4\alpha}\sum_{i}\Diffp\micvec^{n+1,\top}_{i+1/2}\fluxMat^{2}\Diffp\micvec^{n+1}_{i+1/2}\deltax.
     \end{equation}
     We set $\alpha = \frac{\epsi}{2\sol\deltat}$, then $\norm{\frac{\epsi}{\pknorm{0}\sol}\micmat^{n+1} - \frac{\epsi}{\pknorm{0}\sol}\micmat^{n}}$ gets canceled out and we get 

     \begin{equation}\label{eq:Thm1Eq18}
         \begin{aligned}
             \frac{1}{2\deltat}\left( \energy^{n+1} - \energy^{n} \right) + \frac{\epsi\deltax}{4\sol}\sum_{i}\Diffp\micvec^{n+1,\top}_{i+1/2}\absfluxMat\Diffp\micvec^{n+1}_{i+1/2}\deltax - \beta_{N}\frac{\deltat}{2}\sum_{i}\Diffp\micvec^{n+1,\top}_{i+1/2}\fluxMat^{2}\Diffp\micvec^{n+1}_{i+1/2}\deltax \\ \leq - \frac{\absorpcoefflb}{2\sol}\norm{\micmat^{n+1}}^{2}
           +\frac{\deltat}{2}\frac{\pknorm{1}^{2}}{4}\sum_{i}
           (\DiffO\mic^{n+1}_{1,i})^{2}\deltax
         \end{aligned}
     \end{equation}
    We rewrite the last term on the right-hand side of \eqref{eq:Thm1Eq18} as 
    \begin{equation*}\label{eq:Thm1Eq19}
        \frac{\pknorm{1}^{2}}{4}\sum_{i}
           (\DiffO\mic^{n+1}_{1,i})^{2}\deltax = \frac{\pknorm{1}^{2}}{4}\sum_{i}(\Diffp\mic^{n+1}_{1,i})^{2}\deltax = \frac{1}{2}\sum_{i}\Diffp\micvec^{n+1,\top}_{i+1/2}\ba\ba^{\top}\Diffp\micvec^{n+1}_{i+1/2}\deltax
    \end{equation*}
    and we get
    \begin{equation}\label{eq:Thm1Eq20}
        \begin{aligned}
            \frac{1}{2\deltat}\left( \energy^{n+1} - \energy^{n} \right) &\leq - \frac{\absorpcoefflb}{2\sol}\norm{\micmat^{n+1}}^{2}\\ &\quad+ \frac{1}{2}\sum_{i}\Diffp\micvec^{n+1,\top}_{i+1/2}\left[ \deltat\left(\beta_{N}\fluxMat^{2} + \frac{1}{2}\ba\ba^{\top}\right) - \frac{\epsi\deltax}{2\sol}\absfluxMat \right]\Diffp\micvec^{n+1}_{i+1/2}\deltax.
        \end{aligned}
    \end{equation}
    If we define $\bv = (0,\mic_{1},\ldots,\mic_{N})^{\top}\in\R^{N+1} $ and $\hat{\bv} = \FTmat^{\top}\bv\in\R^{N+1} $, then we have from \Cref{lemma:PnPreservation}
    \begin{equation*}\label{eq:Thm1Eq21}
        \Diffp\micvec^{n+1,\top}_{i+1/2}\fluxMat^{2}\Diffp\micvec^{n+1}_{i+1/2} = \Diffp\hat{\bv}^{n+1,\top}_{i+1/2}\quadptMat^{2}\Diffp\hat{\bv}^{n+1}_{i+1/2},
    \end{equation*}
    \begin{equation*}\label{eq:Thm1Eq22}
        \Diffp\micvec^{n+1,\top}_{i+1/2}\absfluxMat\Diffp\micvec^{n+1}_{i+1/2} = \Diffp\hat{\bv}^{n+1,\top}_{i+1/2}\absquadMat\Diffp\hat{\bv}^{n+1}_{i+1/2},
    \end{equation*}
    and
    \begin{equation*}\label{eq:Thm1Eq23}
        \Diffp\micvec^{n+1,\top}_{i+1/2}\ba\ba^{\top}\Diffp\micvec^{n+1}_{i+1/2} = \Diffp\hat{\bv}^{n+1,\top}_{i+1/2}\FTmat^{\top}\Fba\Fba^{\top}\FTmat\Diffp\hat{\bv}^{n+1}_{i+1/2}.
    \end{equation*}
    As given in \autocite[Thoerem~3.2]{einkemmer2022asymptoticpreserving}, since $\FTmat^{\top}\Fba = \frac{1}{\sqrt{3}}\FTmat^{\top}\textbf{e}_{2} = \sqrt{\frac{\weight_{k}}{3}}\pk{1}(\quadpt_{k}) =  \sqrt{\frac{\weight_{k}}{2}}\quadpt_{k} $, we have
    \begin{align*}
        \Diffp\hat{\bv}^{n+1,\top}_{i+1/2}\FTmat^{\top}\Fba\Fba^{\top}\FTmat\Diffp\hat{\bv}^{n+1}_{i+1/2} &= \left( \sum_{k=1}^{N+1}\Diffp\hat{v}^{n+1}_{i+1/2,k}\sqrt{\frac{\weight_{k}}{2}}\quadpt_{k} \right)^{2}\\
        &= \frac{1}{2}\sum_{k,\ell}^{N+1}\Diffp\hat{v}^{n+1}_{i+1/2,k}\sqrt{\weight_{k}}\quadpt_{k}\Diffp\hat{v}^{n+1}_{\ell,i+1/2}\sqrt{\weight_{\ell}}\quadpt_{\ell}\\
        & \overset{\text{Young}}{\leq} \frac{1}{4}\sum_{k,\ell}^{N+1}\left(\Diffp\hat{v}^{n+1}_{i+1/2,k}\right)^{2}\weight_{k}\quadpt_{k}^{2}\\
        &\qquad\qquad +  \frac{1}{4}\sum_{k,\ell}^{N+1}\left(\Diffp\hat{v}^{n+1}_{\ell,i+1/2}\right)^{2}\weight_{\ell}\quadpt_{\ell}^{2}\\
        &= \frac{N+1}{2}\sum_{k}^{N+1}\left(\Diffp\hat{v}^{n+1}_{i+1/2,k}\right)^{2}\weight_{k}\quadpt_{k}^{2}. 
    \end{align*}
    Since we can write $\norm{\micmat^{n+1}}^{2} = \norm{\hat{\bv}^{n+1}}^{2} = \sum_{k,i}(\hat{v}^{n+1}_{i+1/2,k})^{2}\deltax $ we have that \eqref{eq:Thm1Eq20} becomes
    \begin{equation*}\label{eq:Thm1Eq24}
    \begin{aligned}
        \frac{1}{2\deltat}\left( \energy^{n+1} - \energy^{n} \right) &\leq -\frac{\absorpcoefflb}{2\sol}\sum_{k,i}(\hat{v}^{n+1}_{i+1/2,k})^{2}\deltax\\
        & \qquad\qquad +  \frac{1}{2}\sum_{k,i}(\Diffp\hat{v}^{n+1}_{i+1/2,k})^{2} \left[ \deltat\left( \beta_{N}\quadpt_{k}^{2} + \frac{N+1}{4}\weight_{k}\quadpt_{k}^{2} \right) - \frac{\epsi\deltax}{2\sol}\abs{\quadpt_{k}} \right]\deltax.
    \end{aligned}
    \end{equation*}
    \noindent Using \Cref{lemma:boundforDp} we get $\sum_{i}\left( \Diffp\hat{\bv}^{n+1}_{i+1/2} \right)^{2} \leq \frac{4}{\deltax^{2}}\sum_{i}(\hat{\bv}^{n+1}_{i+1/2})^{2}$, thus 
    \begin{equation*}\label{eq:Thm1Eq25}
         \frac{1}{2}\left( \energy^{n+1} - \energy^{n} \right) \leq \frac{1}{2}\frac{\deltat}{\deltax}\sum_{k,i}(\hat{v}^{n+1}_{i+1/2,k})^{2} \left[ \frac{4\deltat}{\deltax^{2}}\left( \beta_{N}\quadpt_{k}^{2} + \frac{1}{4}\beta_{N}\quadpt_{k}^{2} \right) - \frac{4\epsi}{2\sol\deltax}\abs{\quadpt_{k}}  - \frac{\absorpcoefflb}{\sol}\right].
    \end{equation*}
    Hence for ensuring stability we must have for all $k$, where $\quadpt_{k} \neq 0 $,
    \begin{equation*}\label{eq:Thm1Eq26}
         \deltat \leq \frac{1}{5\sol\beta_{N}}\left( \frac{\absorpcoefflb\deltax^{2}}{\quadpt_{k}^{2}} + \frac{2\epsi\deltax}{\abs{\quadpt_{k}}} \right).
    \end{equation*}
    We note that $\beta_{N}$ remains bounded for all $N$.
\end{proof}

\section{Proof of Lemma 1}\label{appendix:ProofAPraBUGAux1}
\begin{proof}
    As the SVD of $\hSmat^{\text{rem}} = \textbf{U}\BSigma\textbf{W}^{\top} $ \eqref{eq:raBUGeq2} where $\textbf{U}$ and $\textbf{W}$ are orthogonal, we have
    \begin{equation*}\label{eq:APraBUGAuxeq1}
        \widehat{\textbf{U}}^{\top}\left(\hSmat^{\text{rem}}\right)^{-\top} = (\widehat{\textbf{U}}^{\top}\textbf{U})\BSigma^{-\top}\textbf{W}^{\top} = (\Smat^{\text{rem}})^{-\top}\widehat{\textbf{W}}^{\top}.
    \end{equation*}
    Consider
    \begin{align*}
        \hXmat^{n+1}\hSmat^{n+1}\begin{bmatrix}
           \textbf{I}_{m} & \textbf{0}\\
           \textbf{0} & \widehat{\textbf{W}}
       \end{bmatrix} &= \widehat{\Kmat}\begin{bmatrix}
           \textbf{I}_{m} & \textbf{0}\\
           \textbf{0} & \widehat{\textbf{W}}
       \end{bmatrix}\\
            &= \begin{bmatrix}
                \Xmat^{\text{ap}}\Smat^{\text{ap}} & \hXmat^{\text{rem}}\hSmat^{\text{rem}}
                    \end{bmatrix}\begin{bmatrix}
           \textbf{I}_{m} & \textbf{0}\\
           \textbf{0} & \widehat{\textbf{W}}
       \end{bmatrix}\\
            &= \begin{bmatrix}
                \Xmat^{\text{ap}} & \hXmat^{\text{rem}}
        \end{bmatrix}
       \begin{bmatrix}
                \Smat^{\text{ap}} & \textbf{0}\\
                \textbf{0} & \hSmat^{\text{rem}}\widehat{\textbf{W}}
            \end{bmatrix}\\
            &= \begin{bmatrix}
                \Xmat^{\text{ap}} & \hXmat^{\text{rem}}
        \end{bmatrix}\begin{bmatrix}
            \textbf{I}_{m} & \textbf{0}\\
            \textbf{0} & \widehat{\textbf{U}}
        \end{bmatrix}\begin{bmatrix}
                \Smat^{\text{ap}} & \textbf{0}\\
                \textbf{0} & \Smat^{\text{rem}}
            \end{bmatrix}
    \end{align*}
    where we use that $\hSmat^{\text{rem}}\widehat{\textbf{W}} = \widehat{\textbf{U}}\Smat^{\text{rem}} $. We have from \eqref{eq:raBUGeq6} that the updated spatial basis matrix has the form $\Xmat^{n+1}\textbf{R}_{2} = \begin{bmatrix}
        \Xmat^{\text{ap}} & \Xmat^{\text{rem}}
    \end{bmatrix} = \begin{bmatrix}
        \Xmat^{\text{ap}} & \hXmat^{\text{rem}}\widehat{\textbf{U}}
    \end{bmatrix} $ and thus
    \begin{equation*}
    \hXmat^{n+1}\hSmat^{n+1}\begin{bmatrix}
           \textbf{I}_{m} & \textbf{0}\\
           \textbf{0} & \widehat{\textbf{W}}
       \end{bmatrix} =\Xmat^{n+1}\textbf{R}_{2}\begin{bmatrix}
           \Smat^{\text{ap}} & \textbf{0}\\
           \textbf{0} & \Smat^{\text{rem}}
       \end{bmatrix}.
    \end{equation*}
    Hence we get the following relation
    \begin{equation*}
    \begin{aligned}
        \begin{bmatrix}
           \textbf{I}_{m} & \textbf{0}\\
           \textbf{0} & \widehat{\textbf{W}}^{\top}
       \end{bmatrix}\hSmat^{n+1,\top}\hSmat^{n+1}\begin{bmatrix}
           \textbf{I}_{m} & \textbf{0}\\
           \textbf{0} & \widehat{\textbf{W}}\end{bmatrix}
           &= \begin{bmatrix}
               \textbf{I}_{m} & \textbf{0}\\
           \textbf{0} & \widehat{\textbf{W}}^{\top}
       \end{bmatrix}\hSmat^{n+1,\top}\hXmat^{n+1,\top}\hXmat^{n+1}\hSmat^{n+1}\begin{bmatrix}
           \textbf{I}_{m} & \textbf{0}\\
           \textbf{0} & \widehat{\textbf{W}} 
       \end{bmatrix}\\
       &= \begin{bmatrix}
           \Smat^{\text{ap},\top} & \textbf{0}\\
           \textbf{0} & \Smat^{\text{rem},\top}
       \end{bmatrix}\textbf{R}_{2}^{\top}\Xmat^{n+1,\top}\Xmat^{n+1}\textbf{R}_{2}\begin{bmatrix}
           \Smat^{\text{ap}} & \textbf{0}\\
           \textbf{0} & \Smat^{\text{rem}}
       \end{bmatrix}\\
       &= \begin{bmatrix}
           \Smat^{\text{ap},\top} & \textbf{0}\\
           \textbf{0} & \Smat^{\text{rem},\top}
       \end{bmatrix}\textbf{R}_{2}^{\top}\textbf{R}_{2}\begin{bmatrix}
           \Smat^{\text{ap}} & \textbf{0}\\
           \textbf{0} & \Smat^{\text{rem}}
       \end{bmatrix}.
    \end{aligned}
    \end{equation*}
    Putting it all together we have 
    \begin{align*}
        \textbf{R}_{2}^{-\top}\begin{bmatrix}
           (\Smat^{\text{ap}})^{-\top} & \textbf{0}\\
           \textbf{0} & \widehat{\textbf{U}}^{\top}(\widehat{\Smat}^{\text{rem}})^{-\top}
       \end{bmatrix}&\hSmat^{n+1,\top}\hSmat^{n+1}\begin{bmatrix}
           \textbf{I}_{m} & \textbf{0}\\
           \textbf{0} & \widehat{\textbf{W}}
       \end{bmatrix} \\
       &= \textbf{R}_{2}^{-\top}\begin{bmatrix}
           (\Smat^{\text{ap}})^{-\top} & \textbf{0}\\
           \textbf{0} & (\Smat^{\text{rem}})^{-\top}\widehat{\textbf{W}}^{\top}
       \end{bmatrix}\hSmat^{n+1,\top}\hSmat^{n+1}\begin{bmatrix}
           \textbf{I}_{m} & \textbf{0}\\
           \textbf{0} & \widehat{\textbf{W}}
       \end{bmatrix}\\
       &= \textbf{R}_{2}^{-\top}\begin{bmatrix}
           \Smat^{\text{ap}} & \textbf{0}\\
           \textbf{0} & \Smat^{\text{rem}}
       \end{bmatrix}^{-\top}\begin{bmatrix}
           \textbf{I}_{m} & \textbf{0}\\
           \textbf{0} & \widehat{\textbf{W}}^{\top}
       \end{bmatrix}\hSmat^{n+1,\top}\hSmat^{n+1}\begin{bmatrix}
           \textbf{I}_{m} & \textbf{0}\\
           \textbf{0} & \widehat{\textbf{W}}
       \end{bmatrix}\\
       & = \textbf{R}_{2}^{-\top}\begin{bmatrix}
           \Smat^{\text{ap}} & \textbf{0}\\
           \textbf{0} & \Smat^{\text{rem}}
       \end{bmatrix}^{-\top}\begin{bmatrix}
           \Smat^{\text{ap},\top} & \textbf{0}\\
           \textbf{0} & \Smat^{\text{rem},\top}
       \end{bmatrix}\textbf{R}_{2}^{\top}\textbf{R}_{2}\begin{bmatrix}
           \Smat^{\text{ap}} & \textbf{0}\\
           \textbf{0} & \Smat^{\text{rem}}
       \end{bmatrix}\\
       &= \textbf{R}_{2}\begin{bmatrix}
           \Smat^{\text{ap}} & \textbf{0}\\
           \textbf{0} & \Smat^{\text{rem}}
       \end{bmatrix}\\
       & = \Smat^{n+1}.
    \end{align*}
\end{proof}

\end{appendices}

\end{document}